\newtheorem{thm}{Theorem}
\newtheorem{lem}{Lemma}
\newtheorem{prop}{Proposition}
\theoremstyle{remark}
\newtheorem{rem}{Remark}
\newcommand{\eps}{\epsilon}
\newcommand{\al}{\alpha}
\newcommand{\sig}{\sigma}
\newcommand{\te}{\theta}
\newcommand{\de}{\delta}
\newcommand{\ga}{\gamma}
\newcommand{\Ga}{\Gamma}
\newcommand{\be}{\beta}
\newcommand{\RR}{\mathbb{R}}
\newcommand{\RS}{\mathcal{R}}
\newcommand{\KS}{\mathcal{K}}
\newcommand{\RC}{\mathbf{R}}
\newcommand{\KC}{\mathbf{K}}
\newcommand{\JS}{\mathcal{J}}
\newcommand{\JC}{\mathbf{J}}
\title[Behaviour of Schrödinger Riesz transforms over smoothness spaces ]{Behaviour of Schrödinger Riesz transforms over smoothness spaces}
\author{B. Bongioanni, E. Harboure and P. Quijano}
\subjclass[2010]{Primary 42B20, Secondary 35J10}
\keywords{Schr\"odinger operator, Riesz transforms, regularity spaces}
\email{bbongio@santafe-conicet.gov.ar}
\address{Instituto de Matem\'atica Aplicada del Litoral CONICET-UNL, and Facultad de Ingenier\'ia Qu\'imica, UNL. Colectora Ruta Nac. N 168, Paraje El Pozo.}
\email{harbour@santafe-conicet.gov.ar}
\address{Instituto de Matem\'atica Aplicada del Litoral, CONICET-UNL. Colectora Ruta Nac. N 168, Paraje El Pozo.
	3000 Santa Fe, Argentina.}
\email{pquijano@santafe-conicet.gov.ar}
\address{Instituto de Matem\'atica Aplicada del Litoral, CONICET-UNL, and Facultad de Ingenier\'ia Qu\'imica, UNL. Colectora Ruta Nac. N 168, Paraje El Pozo.
	3000 Santa Fe, Argentina.}
\date{}
\begin{document}
	
	\begin{abstract}
	As it was shown by Shen, the  Riesz transforms associated to the Schrödinger operator $L=-\Delta + V$ are not bounded on $L^p(\mathbb{R}^d)$-spaces for all $p, 1<p<\infty$, under the only assumption that the potential satisfies a reverse Hölder condition of order $d/2$, $d\geq3$. Furthermore, they are bounded only for $p$ in some finite interval of the type $(1,p_0)$, so it can not be expected to preserve regularity spaces. In this work we search for some kind of minimal additional conditions on the potential in order to obtain boundedness on appropriate weighted $BMO$ type regularity spaces for all first and second order Riesz transforms, namely for the operators $\nabla L^{-1/2}$, $V^{1/2}L^{-1/2}$, $\nabla^2 L^{-1}$, $VL^{-1}$ and $V^{1/2}\nabla L^{-1}$. We also explore to what extent such extra conditions are also necessary.
	\end{abstract}
	
	\maketitle

\section{Introduction}\label{sec-intro}

	Given a non-negative potential $V$ we consider the Schr\"odinger type operator in $\RR^d$
	% for the equation associated to the  Schrödinger differential operator
	\[
	L= -\Delta  + V.
	\]
	The behaviour on function spaces of the associated Riesz transforms plays an important role in studying regularity properties for the solutions of either $Lu=f$ or $Lu=\nabla \cdot f$ in terms of the data.
	
	In this work we shall be concerned with all first and second order Riesz transforms: those being singular as $\mathcal{R}_1= \nabla L^{-1/2}$ and $\mathcal{R}_2= \nabla^2 L^{-1}$ as well as those involving the pontential, namely,  $V^{1/2} L^{-1/2}$, $ V L^{-1}$ and $ V^{1/2} \nabla L^{-1}$.
	
%	in $\mathbb{R}^d$, we may consider the singular Riesz transforms of first and second order, namely $\mathcal{R}_1= \nabla L^{-1/2}$ and $\mathcal{R}_2= \nabla^2 L^{-1}$. However, to define appropriate Sobolev spaces it is also needed to deal with the operators
%	$ V^{1/2} L^{-1/2}$, $ V L^{-1}$ and $ V^{1/2} \nabla L^{-1}$. These operators, even not singular, play an important role, for example, in obtaining regularity results for the solution of the above equation. We will refer to all of them as first and second order Riesz transforms.
	
	Under the assumption that the potential $V$ belongs to the  reverse-Hölder class $RH_q$ with $q>d/2$, $d\geq 3$, that is, there exists $C$ such that
	
	\begin{equation}\label{eq:RH}
	\left(\frac{1}{|B|}\int_B V^q\right)^{{1}/{q}} \leq
	C \frac{1}{|B|}\int_B V,
	\end{equation}
	holds for every ball $B$ in $\RR^d$, $L^p$-inequalities were derived for all them by Shen in  \cite{shen}. Let us remind that the set of exponents $q$ such that a given function belongs to $RH_q$ is an open interval, so our assumption can be written as $V\in RH_{d/2}$.
	
	Later on, many authors have been concerned with boundedness results for the Schr\"odinger Riesz transforms acting on different spaces. For weighted $L^p$ spaces see for example \cite{BHS-classesofweights}, \cite{BCH1_MR3042701}, \cite{tangMR3365805}, \cite{Ly_secondorder_MR3488134} and \cite{bongioanni2019weighted}; for the behaviour on suitable Hardy spaces we refer to \cite{DZ-HSH-99}, \cite{T1BuiLiLy}, \cite{MR3002608} and \cite{BCH3_MR3541813}; as for the continuity on appropriate $BMO$ and regularity spaces, see for instance
	\cite{BHS-RieszJMAA}, \cite{MR3180934_stinga}, \cite{T1BuiLiLy} and \cite{bongioanni2019weighted}.

	A distinctive feature of the Schr\"odinger Riesz transforms is that, due to the mild assumption on the potential $V$, they are not bounded in all $L^p$, $1<p<\infty$. Rather, they are bounded just in some initial segment of $p$, that is, an interval of the form $(1, p_0)$ for some $p_0 < \infty$ and this range is known to be optimal. The case of $\mathcal{R}_1$ is the best, in the sense that it turns to be bounded in all $L^p$, $1<p<\infty$, imposing the stronger condition $q>d$ on the potential. Moreover, as Shen shows (see~\cite{shen}), it is in fact a Calderón-Zygmund operator so, in particular, it is also of weak type when $p=1$. In the other cases, boundedness in all $L^p$ holds under an even stronger assumption, for instance, when the potential belongs to $RH_\infty$, which means that the above inequality holds with the $L^\infty$-norm of $\chi_BV$ on the left hand side.
	
	A fundamental tool in Shen's approach is the critical radius function associated to the potential $V$ defined by
	\begin{equation}\label{defrho}
	\rho(x)=\sup\left\{r>0: \frac{1}{r^{d-2}} \int_{B(x,r)} V\leq 1 \right\},\,\,
	x\in\RR^d.
	\end{equation}
	
	Whenever $V$ satisfies a reverse-Hölder condition of order $q>d/2$ we have $0<\rho(x)<\infty$ for all $x\in \mathbb{R}^d$. Moreover, the function $\rho$ satisfies the following two inequalities
	\begin{equation}\label{rox_vs_roy}
	c_\rho^{-1} \rho(x)\left(1+\frac{|x-y|}{\rho(x)}\right)^{-N_0} \ \leq \ \rho(y) \ \leq \ c_\rho\,\rho(x)\left(1+\frac{|x-y|}{\rho(x)}\right)^{\frac{N_0}{N_0+1}},
	\end{equation}
	for all $x, y \in \RR^d$, and some positive constants $c_\rho$ and $N_0$ (see \cite{shen} Corollary~1.5).
		
	Concerning the behaviour of these operators on regularity spaces, it is known that $\mathcal{R}_1$, under the extra assumption $q>d$, preserves suitable smoothness spaces. 
	
	More precisely, for $0\leq\beta <1$ and a weight $w$, let us introduce the spaces $BMO^\beta_\rho (w)$ as those locally integrable functions satisfying
	\begin{equation}\label{osc_acotada}
	\int_B |f-f_B| \leq C \, w(B)\,|B|^{\be/d}\text{, \ \ \ for any ball } B,
	\end{equation}
	and
	\begin{equation}\label{prom_acotado}
	\int_B |f| \leq C \, w(B)\,|B|^{\be/d}\text{, \ \ \ for all } B=B(x,R)
	\text{, \ with } R\geq \rho(x).
	\end{equation}
	Here, as usual, $f_B$ stands for the average of $f$ over the ball $B$, and the norm $\|f\|_{BMO_{\rho}^{\beta}(w)}$ is defined as the maximum of the infima of the constants in~\eqref{osc_acotada} and~\eqref{prom_acotado}. Also, when $\beta=0$ we denote the space as $BMO_\rho(w)$ and when $w=1$ we just write $BMO_\rho^\beta$.
	
	In~\cite{MR3180934_stinga} the authors show boundedness of $\mathcal{R}_1$ over such spaces for the case $w=1$, while in~\cite{bongioanni2019weighted} sufficient conditions on the weight are obtained to guarantee continuity over $BMO^\beta_\rho(w)$ assuming $V\in RH_q$ with $q>d$. Nevertheless, in both results, the index of regularity is restricted to $0\leq \beta< 1-d/q$. 
	
	For the remaining operators very little is known about their behaviour on regularity spaces. As we mentioned, we can not expect to get boundedness under the only condition $V\in RH_{d/2}$. In fact, if they were bounded on $BMO_\rho$, they would map $L^\infty$ into the classical $BMO$ and, by interpolation, they should be bounded on all $L^p$. Therefore, a stronger condition is required. Let us remark that, to our knowledge, the only result in this direction appeared in~\cite{T1BuiLiLy} where the authors obtained regularity results in the unweighted case for the operator $V^\gamma L^{-\gamma}$, $0<\gamma\leq 1$, assuming $V\in RH_\infty$ and $|\nabla V(x)| \leq C/\rho(x)$.
	
	Our purpose in this work is to give sufficient conditions on $V$ in order to get boundedness on $BMO^\beta_\rho(w)$ spaces for all the first and second  order Schrödinger-Riesz transforms. In all cases we start with $V\in RH_{d/2}$ and we add a kind of local smoothness condition relative to the function $\rho$. For instance, for $\mathcal{R}_2$ which is known to be bounded on $L^p$ for $1<p\leq q$, we further assume that $V$ satisfies for some $\alpha>0$,
	\begin{equation}\label{eq-suavV}
	|V(x)-V(y)|\leq C\frac{|x-y|^{\alpha}}{\rho^{2+\alpha}(x)}, \ \ \ \text{for $|x-y|<\rho(x)$.}
	\end{equation}
	Our results will show that, under these conditions,  $\mathcal{R}_2$ is bounded on $BMO^\beta_\rho$ for all $0\leq \beta < \alpha$ and moreover it remains true for a certain class of weights that will be introduced in the next section. In particular, if  \eqref{eq-suavVgamma} holds with $\alpha=1$ we obtain the whole range $0\leq \beta<1$.

	Let us remark that in \cite{shen} (see Corollary~2.8 and the proof of Theorem~0.3), it is showed that  $\mathcal{R}_2$ is bounded on $L^p$  for $1<p<\infty$ whenever $V\in RH_{d/2}$ and satisfies
	\begin{equation}\label{eq-tamV}
	V(x)\leq \frac{C}{\rho^2(x)}, \ \ \ \text{for all $x$.}
	\end{equation}
	As we shall see, that condition is weaker than $RH_\infty$ and it also looks like the limiting case of \eqref{eq-suavV} when $\alpha =0$. 
	
	Certainly, \eqref{eq-suavV} implies \eqref{eq-tamV}. In fact, if $V$ satisfies \eqref{eq-suavV}, given $x\in \RR^d$ and $y\in B=B(x,\rho(x))$ we have
	\begin{equation*}
	V(x) \leq  V(y) + \frac{C|x-y|^{\alpha}}{\rho^{2+\alpha}(x)} \leq  V(y) + \frac{C}{\rho^{2}(x)}.
	\end{equation*}
	Therefore, averaging over $B=B(x,\rho(x))$ in the $y$-variable
	\begin{equation*}
	V(x) = \frac{1}{|B|} \int_{B} V(x)dy  \leq \frac{1}{|B|} \int_{B} V + \frac{C}{\rho^{2}(x)} \leq \frac{C'}{\rho^{2}(x)},
	\end{equation*}
	where we have used the definition of $\rho(x)$.
	
	On the other hand, it is not hard to prove that $V\in RH_\infty$ implies $V\in RH_{d/2}$ and $V(x)\leq \frac{C}{\rho^2(x)}$. The first part is obvious and also, applying the $RH_\infty$ condition to balls of the type $B=B(x,\rho(x))$ we get
	\[
	V(x)\leq\sup_{B} V \leq C \frac{1}{|B|}\int_B V \leq \frac{C}{\rho^2(x)},
	\]
	by the definition of $\rho(x)$. However the reciprocal is not true since $V(x)=\frac{1}{1+|x|^{2-\varepsilon}}$ belongs to $RH_{d/2}$ but not to $RH_\infty$, otherwise, for $r>1$,
	\[
	1=V(0) \leq C\frac{1}{|B(0,r)|} \int_{B(0,r)} V(x)\,\, dx = c_{d,\varepsilon} r^{-2+\varepsilon},
	\] 
	which tends to zero when $r\rightarrow \infty$, for small $\varepsilon$. Moreover, elementary calculations show that \eqref{eq-tamV} holds for such potential $V$.

	As we said, for the operator $\mathcal{R}_1$, continuity on regularity spaces is known. Nevertheless, as we will show, such regularity results can be extended to all $0\leq \beta <1$, under the assumptions $V\in RH_{d/2}$ and $V$ satisfying \eqref{eq-tamV}. 
	
	We will also take care of those operators involving multiplication by a power of $V$. More generally, we will consider $T_\gamma= V^\gamma L^{-\gamma}$, $0<\gamma\leq 1$ and $S_\gamma= V^\gamma \nabla L^{-\frac{1}{2}-\gamma}$, for $0<\gamma\leq 1/2$. In such cases we shall prove boundedness results on weighted $BMO^\beta_\rho$, $0\leq \beta< \alpha$, as long as $V\in RH_{d/2}$ and
	\begin{equation}\label{eq-suavVgamma}
	|V^\gamma(x)-V^\gamma(y)|\leq C\frac{|x-y|^{\alpha}}{\rho^{2\gamma +\alpha}(x)}, \ \ \ \text{for $|x-y|<\rho(x)$.}
	\end{equation}
	Observe that if we take $\alpha=1$ we can conclude the same boundedness result as in~\cite{T1BuiLiLy} but with weaker hypotheses.
	
	Clearly, if \eqref{eq-suavV} is true, the last inequality also holds for any $0<\gamma\leq 1$ but the reciprocal is not necessarily true. However, as it is easy to check, inequality \eqref{eq-suavVgamma} implies also \eqref{eq-tamV}.
	
		The organization of the paper is as follows.

First, in Section~\ref{sec-pre} , we remind some properties of the function $\rho$, we introduce the needed classes of weights and state some known properties of the spaces $BMO^\beta_\rho(\omega)$. At this point we are ready to state our main results concerning the continuity on $BMO^\beta_\rho(\omega)$ of the operators $\mathcal{R}_1$, $\mathcal{R}_2$, $T_\gamma $ and $S_\gamma$ under appropriate additional conditions, which may change with the operator. Then we introduce what will be the essential tool in proving our main results, more precisely, a consequence of Theorem 2 in~\cite{bongioanni2019weighted}, that gives regularity results for a large class of  of operators that we call Schrödinger-Calderón-Zygmund operators of type $(\infty,\delta$).

To be able to apply such result we establish, in Section~\ref{sec-Gamma,} some better estimates of the fundamental solution of $L_{\tau}=-\Delta +V+i\tau$ under the stronger assumptions $V\in RH_{d/2}$ and \eqref{eq-tamV}. 

Next, the following two sections are devoted to prove the theorems. We show first, in Section~\ref{sec-reg-conV}, the results for the operators involving the potential, that is,  $T_\gamma= V^\gamma L^{-\gamma}$, $0<\gamma\leq1$ and $S_\gamma= V^\gamma \nabla L^{-\frac{1}{2}-\gamma}$, for $1/2<\gamma\leq 1$, while in Section~\ref{sec-reg-sing} continuity for the singular Riesz transforms, $\mathcal{R}_1$ and $\mathcal{R}_2$, is derived. In all cases we check that the requirements of the general theorem are satisfied, being the case of $\mathcal{R}_2$ the most delicate of all.

We finish our work with a digression in Section~\ref{sec-digression} about the necessity of the conditions imposed to the potential $V$. In fact, for small values of $\gamma$, we are able to prove that condition {\eqref{eq-suavVgamma} is necessary and sufficient for the $BMO^\alpha_\rho$-boundedness of $T_\gamma$.

\section{Preliminaries and statement of the main results}\label{sec-pre}

We consider, for a potential $V\in RH_{d/2}$, the associated critical radius function defined as in~\eqref{defrho}. As we mentioned (see~\cite{shen}), under this assumption we have $0<\rho(x)<\infty$ for all $x\in \mathbb{R}^d$ and \eqref{rox_vs_roy}.

Next, we state two easy consequences of property \eqref{rox_vs_roy} that will be used very often.

\begin{lem}\label{lem:mismacritica}
	Given $x$ and $y$ such that $|x-y|<k\rho(x)$, there exists a constant $C$ depending on $k$, $c_\rho$ and $N_0$, such that $\frac{\rho(x)}{C}\leq \rho(y) \leq C\rho(x)$.
\end{lem}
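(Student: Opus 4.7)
The plan is to deduce the lemma as a direct consequence of the two-sided inequality \eqref{rox_vs_roy}. First I would observe that the hypothesis $|x-y|<k\rho(x)$ immediately gives $1+|x-y|/\rho(x)<1+k$, so the factor $\bigl(1+|x-y|/\rho(x)\bigr)^{N_0/(N_0+1)}$ appearing in the upper bound of \eqref{rox_vs_roy} is bounded by $(1+k)^{N_0/(N_0+1)}$, and similarly the factor $\bigl(1+|x-y|/\rho(x)\bigr)^{-N_0}$ in the lower bound is at least $(1+k)^{-N_0}$.

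Plugging these into \eqref{rox_vs_roy} yields
\[
c_\rho^{-1}(1+k)^{-N_0}\,\rho(x)\leq \rho(y)\leq c_\rho(1+k)^{N_0/(N_0+1)}\rho(x),
\]
so one may take
\[
C=\max\bigl\{c_\rho(1+k)^{N_0/(N_0+1)},\ c_\rho(1+k)^{N_0}\bigr\},
\]
which depends only on $k$, $c_\rho$ and $N_0$, as required.

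There is no real obstacle here; the lemma is essentially a rewriting of \eqref{rox_vs_roy} in a more usable symmetric form. The only point worth being careful about is that the two exponents $N_0$ and $N_0/(N_0+1)$ are not equal, so one must take the larger of the two resulting constants to obtain a single $C$ that works simultaneously for the upper and lower bound.
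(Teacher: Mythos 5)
Your proof is correct and follows exactly the same route as the paper, which simply states that the lemma is a direct consequence of \eqref{rox_vs_roy}; you have merely spelled out the routine computation of the constant $C$. No issues.
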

\begin{proof}
	The result is a direct consequence of \eqref{rox_vs_roy}.
\end{proof}

\begin{lem}\label{lem:sobrero}
	If $c_\rho$ and $N_0$ are the constants in \eqref{rox_vs_roy}, then
	\begin{equation}\label{roxaroy}
	1+\frac{|x-y|}{\rho(y)}\ \leq \ c_\rho \left(1+\frac{|x-y|}{\rho(x)}\right)^{N_0+1},
	\end{equation}
	for all $x, y \in \RR^d$.
\end{lem}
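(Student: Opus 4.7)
The plan is to extract the inequality directly from the left-hand estimate in \eqref{rox_vs_roy}, which controls $\rho(y)$ from below in terms of $\rho(x)$ and $|x-y|$. Taking reciprocals in that estimate and multiplying by $|x-y|$ gives
\[
\frac{|x-y|}{\rho(y)} \ \leq \ c_\rho\,\frac{|x-y|}{\rho(x)}\left(1+\frac{|x-y|}{\rho(x)}\right)^{N_0},
\]
so one is left with a purely algebraic inequality in the single variable $t:=|x-y|/\rho(x)\geq 0$, namely to show that $1+c_\rho\, t(1+t)^{N_0} \leq c_\rho(1+t)^{N_0+1}$.

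For this last step I would assume (with no loss of generality, replacing $c_\rho$ by $\max(c_\rho,1)$ if needed, since $c_\rho$ merely denotes a finite constant in \eqref{rox_vs_roy}) that $c_\rho\geq 1$, so that $1\leq c_\rho(1+t)^{N_0}$. Factoring $(1+t)^{N_0}$ out of the target then yields
\[
1+c_\rho\,t(1+t)^{N_0} \ \leq \ c_\rho(1+t)^{N_0} + c_\rho\,t(1+t)^{N_0} \ = \ c_\rho(1+t)^{N_0+1},
\]
which is exactly the claimed bound. There is no real obstacle here; the statement is essentially a bookkeeping corollary of \eqref{rox_vs_roy}, and the only mild point is normalising $c_\rho\geq 1$ so that the constant on the right can be taken to equal $c_\rho$ rather than $c_\rho+1$.
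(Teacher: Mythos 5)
Your proof is correct and follows essentially the same route as the paper: invert the left-hand inequality of \eqref{rox_vs_roy}, multiply by $|x-y|$, and absorb the extra $1$ on the left into the right-hand side. One small remark: the normalisation $c_\rho\geq 1$ that you introduce ``without loss of generality'' is in fact automatic, since setting $x=y$ in \eqref{rox_vs_roy} gives $c_\rho^{-1}\rho(x)\leq\rho(x)\leq c_\rho\rho(x)$ and hence $c_\rho\geq 1$; so no renaming of the constant is needed. Your explicit factorisation $1+c_\rho\,t(1+t)^{N_0}\leq c_\rho(1+t)^{N_0}+c_\rho\,t(1+t)^{N_0}=c_\rho(1+t)^{N_0+1}$ is actually a touch cleaner than the paper's bookkeeping, which bounds $\tfrac{|x-y|}{\rho(x)}\leq 1+\tfrac{|x-y|}{\rho(x)}$ first and then appeals to $1\leq(1+t)^{N_0+1}$ to absorb the additive $1$; both arguments rest on the same two elementary observations.
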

\begin{proof}
	If $x, y \in \RR^d$, from the left hand side of \eqref{rox_vs_roy} we have
	\begin{equation}\label{izqroxvsroy}
	\frac{1}{\rho(y)} \ \leq \ c_\rho \frac{1}{\rho(x)} \left(1+\frac{|x-y|}{\rho(x)}\right)^{N_0}.
	\end{equation}
	Then,
	\begin{equation*}
	\frac{|x-y|}{\rho(y)} \ \leq \ c_\rho \frac{|x-y|}{\rho(x)} \left(1+\frac{|x-y|}{\rho(x)}\right)^{N_0} \ \leq \ c_\rho \left(1+\frac{|x-y|}{\rho(x)}\right)^{N_0+1}.
	\end{equation*}
	Since,	$1 \leq \ \left(1+\frac{|x-y|}{\rho(x)}\right)^{N_0+1}$, inequality \eqref{roxaroy} follows.	
\end{proof}

Let us go back to the weighted regularity spaces presented in the introduction. We make two observations. First, since the condition on the averages \eqref{prom_acotado} is stronger than \eqref{osc_acotada} it is enough to require~\eqref{osc_acotada} for balls $B(x,r)$ with $r\leq \rho(x)$. We also remind that it is enough to ask inequality \eqref{prom_acotado} only for $R=\rho(x)$.

We also remind that for $0<\be\leq 1$, these spaces have a point-wise description that ensures some regularity condition at almost every point. In fact, if $w$ is doubling over balls $B(x,R)$ with $R\leq\rho(x)$, and defining for $x\in \RR^d$ and $r>0$,
\begin{equation*}
W_\be(x,r)= \int_{B(x,r)} \frac{w(z)}{|z-x|^{d-\be}}\, dz,
\end{equation*}
a function $f$ belongs to $BMO^\beta_\rho(w)$, if and only if 
\begin{equation}\label{cond_Lip_local}
\left|f(x)-f(y)\right|\ \leq\ C\, \left[ W_\be(x,|x-y|) +\ W_\be(y,|x-y|) \right]
\end{equation}
and
\begin{equation}\label{cond_Lip_global}
|f(x)|\ \leq\ C\, W_\be(x,\rho(x))
\end{equation}
for $x$ and $y$ in $\RR^d$ (see~\cite{BHSnegativepowers}). Notice that when $w\equiv1$, $W_\beta(x,r)=r^\beta$ and~\eqref{cond_Lip_local} is just the usual Lipschitz condition while~\eqref{cond_Lip_global} becomes $|f(x)|\leq C\rho(x)$.

%Since $\int_B W_\be(x,r)dx<\infty$ for every ball $B$, we have $W_\be(x,r)$ is finite for almost every $x$; and thus $f$ is in fact smooth almost everywhere, but with a modulus of continuity that may change from one point to another.

To state our main results we need to introduce appropriate classes of weights associated to a critical radius function $\rho$. Following~\cite{BHS-classesofweights} we define, for a given $p> 1$,
$\displaystyle A_p^{\rho}=\bigcup_{\theta\geq0}A_p^{\rho,\te}$, where $A_p^{\rho,\te}$ is defined as those weights $w$ such that
\begin{equation}\label{eq:Ap}
\left(\int_B w\right)^{1/p}
\left(\int_B w^{-\tfrac{1}{p-1}}\right)^{1/p'}
\leq
C |B| \left(1+ \frac{r}{\rho(x)}\right)^{\te},
\end{equation} 
for every ball $B=B(x,r)$.

Similarly, when $p=1$, we denote $\displaystyle A_1^{\rho}=\bigcup_{\theta\geq0}A_1^{\rho,\te}$, where $A_1^{\rho,\te}$ is the class of weights $w$ such that
\begin{equation}\label{eq:A1}
\frac{1}{|B|}\int_B w \leq
C\left(1+\frac{r}{\rho(x)}\right)^{\theta}
\inf_{B} w,
\end{equation} 
for every ball $B=B(x,r)$. As in the classical Muckenhoupt theory, the $A_p^{\rho}$ classes are increasing with $p$, and their weights have a self-improvement property, that is, if $w\in A_p^{\rho}$ then $w\in A_{p-\eps}^{\rho}$ for some $\eps>0$ (see Proposition~5 in \cite{BHS-classesofweights}).
We will often use the notation $A_{\infty}^{\rho}=\bigcup_{p\geq1}A_{p}^{\rho}$.

Following the same lines, we define doubling classes of weights adapted to this context. For $\mu\geq 1$ let us denote $D_{\mu}^{\rho}=\bigcup_{\theta\geq0}D_\mu^{\rho,\te}$, where $D_\mu^{\rho,\te}$ is the class of weights $w$ such that
there exists a constant $C>0$ such that  
\begin{equation}\label{duplicacionMuRho}
w(B(x,R))\leq C w(B(x,r))\left(\frac{R}{r}\right)^{d\mu}\left(1+\frac{R}{\rho(x)}\right)^{\theta},
\end{equation}
for $x\in\RR^d$, $r\leq R$. For a weight $w\in A_p^{\rho}$ it easy to check that $w\in D_{p}^{\rho}$. 

Observe that for the Muckenhoupt classes we have, $A_p\subset A_p^\rho$ for $p\geq1$, and similarly $D_\mu\subset D_\mu^\rho$ for $\mu\geq 1$. However, the reciprocal is not true. For example, if $V\equiv 1$, the weight $w(x)=1+|x|^\sigma$ for $\sigma>d(p-1)$ belongs to $A_p^\rho$ but not to the classical $A_p$ space.

Now we are ready to state the main theorems we are going to prove. In the first two theorems we consider a generalization of the operators involving $V$ by studying the families $T_\ga=V^{\ga}L^{-\ga}$ with $0<\ga\leq 1$, and $S_\ga=V^\ga\nabla L^{-1/2-\ga}$ with $0<\ga\leq 1/2$. In fact, $T_{1/2}$, $T_1$ and $S_{1/2}$ give the operators named in the introduction.

\begin{thm}\label{thm:TGamma}
	Let $0<\gamma\leq 1$, $V\in RH_{d/2}$ and satisfying~\eqref{eq-suavVgamma} for some $0<\alpha\leq 1$. Then, the operator $T_\ga$ is bounded on $BMO_{\rho}^{\beta}(w)$, for $0\leq \beta<\alpha$, and any weight $w\in A_{\infty}^{\rho}\cap D_{\mu}^{\rho}$ and $1\leq \mu < 1+\frac{\alpha-\beta}{d}$.
\end{thm}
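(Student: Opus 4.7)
The plan is to apply the general boundedness theorem for Schr\"odinger--Calder\'on--Zygmund operators of type $(\infty,\delta)$ that the authors promise to state at the end of Section~\ref{sec-pre}. To invoke it for $T_\gamma$ one must produce, for some $\delta$ with $\beta<\delta<\alpha$, both a pointwise size estimate and a pointwise H\"older-type regularity estimate on the kernel of $T_\gamma$, each carrying a fast decay in $\rho$ away from the diagonal; once this is in hand, the doubling condition $w\in D_\mu^\rho$ with $\mu<1+(\alpha-\beta)/d$ and $w\in A_\infty^\rho$ feeds into that theorem to give continuity on $BMO_\rho^\beta(w)$.

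First I would represent $L^{-\gamma}$ via the subordination/resolvent formula, writing
\[
L^{-\gamma}f(x) = c_\gamma \int_0^\infty \tau^{-\gamma}\,(L+\tau)^{-1}f(x)\,d\tau
\]
(or through a complex contour involving $L_\tau=-\Delta+V+i\tau$, which is exactly why Section~\ref{sec-Gamma,} isolates the improved bounds on the fundamental solution $\Gamma_\tau$ under \eqref{eq-tamV}). Integrating in $\tau$ produces a kernel $K_{L^{-\gamma}}(x,y)$ for $L^{-\gamma}$, and the kernel of $T_\gamma$ is then
\[
K_\gamma(x,y)=V^\gamma(x)\,K_{L^{-\gamma}}(x,y).
\]

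Second, I would verify the size estimate: combining the resolvent kernel bound that Section~\ref{sec-Gamma,} provides with the pointwise bound $V^\gamma(x)\lesssim \rho(x)^{-2\gamma}$ (which, as the introduction shows, is a consequence of \eqref{eq-suavVgamma}), the $\tau$-integration yields $|K_\gamma(x,y)|\lesssim |x-y|^{-d}$ together with an arbitrarily large decay factor $(1+|x-y|/\rho(x))^{-N}$. For the regularity, I would fix $x$ and two points $y,y'$ with $|y-y'|<|x-y|/2$, and split
\[
K_\gamma(x,y)-K_\gamma(x,y')=V^\gamma(x)\bigl[K_{L^{-\gamma}}(x,y)-K_{L^{-\gamma}}(x,y')\bigr].
\]
Control of the bracket follows by comparing $\Gamma_\tau(x,\cdot)$ with the classical fundamental solution and absorbing the error that involves differences of $V^\gamma$; at this stage the hypothesis \eqref{eq-suavVgamma} feeds in and yields H\"older regularity of order $\alpha$ for $K_{L^{-\gamma}}$, hence for $K_\gamma$, up to the same $\rho$-decay factor.

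The main obstacle I foresee is bridging the local nature of \eqref{eq-suavVgamma} (valid only for $|x-y|<\rho(x)$) with the need to get a uniform kernel regularity estimate that still carries the $\rho$-decay at large scales. This is handled by splitting the defining $\tau$-integral into a local (short-scale) piece where \eqref{eq-suavVgamma} can be applied directly to differences of $V^\gamma$ at points inside a single critical ball, and a global (large-scale) piece where the exponential decay of the resolvent in the $\rho$-metric dominates any loss coming from $V^\gamma$ being only bounded by $\rho^{-2\gamma}$. Once the kernel estimates of type $(\infty,\delta)$ are established for every $\delta<\alpha$, choosing $\delta\in(\beta,\alpha)$ and appealing to the general regularity theorem of Section~\ref{sec-pre} delivers the announced boundedness of $T_\gamma$ on $BMO_\rho^\beta(w)$ under the stated weight hypotheses.
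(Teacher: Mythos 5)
Your plan follows the paper in invoking the general result (Theorem~\ref{teo-T1}), but it omits the essential hypothesis of that theorem and therefore cannot close. Theorem~\ref{teo-T1} does not conclude from the Schr\"odinger--Calder\'on--Zygmund structure alone; it additionally requires the cancellation condition~\eqref{condT1}, namely
\[
|T_\gamma 1(x)-T_\gamma 1(z)|\le C\Bigl(\frac{|x-z|}{\rho(x)}\Bigr)^{\alpha}
\quad\text{for }|x-z|\le\rho(x)/2,
\]
and verifying this is the heart of the paper's proof of Theorem~\ref{thm:TGamma}. It is done by first noting that $T_\gamma 1$ is finite, then decomposing
\[
T_\gamma 1(x)-T_\gamma 1(z)=V^\gamma(x)\int(\JS_\gamma(x,\cdot)-\JS_\gamma(z,\cdot))+(V^\gamma(x)-V^\gamma(z))\int\JS_\gamma(z,\cdot),
\]
and controlling the first term by comparing $\JS_\gamma$ with the classical fractional kernel $\JC_\gamma$ on the critical ball $B(x,\rho(x))$ (using parts (c) and (d) of Lemma~\ref{lem-Jgamma}) and off it by the decay in part (b). Nothing in your write-up addresses this $T1$ bound; "size plus H\"older regularity of the kernel" is precisely not enough.

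Two further points are off. First, the smoothness needed in condition~\eqref{suav-puntual} is in the \emph{first} variable, $|K(x,y)-K(x_0,y)|$, and the kernel $K_\gamma(x,y)=V^\gamma(x)\JS_\gamma(x,y)$ is not symmetric, so your proposed split $K_\gamma(x,y)-K_\gamma(x,y')=V^\gamma(x)[\cdots]$ is in the wrong variable; moving $x$ is exactly what forces you to confront $V^\gamma(x)-V^\gamma(x_0)$, which is where \eqref{eq-suavVgamma} (and, when $|x-x_0|>\rho(x)$, inequality~\eqref{rox_vs_roy}) enters. Second, the limitation to H\"older order $\alpha$ does not come from $K_{L^{-\gamma}}=\JS_\gamma$; by Lemma~\ref{lem-Jgamma}(b) that kernel has Lipschitz (order-$1$) smoothness under the standing hypotheses. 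The order-$\alpha$ restriction comes solely from the factor $V^\gamma$, so attributing it to $\JS_\gamma$ inverts the actual source of the loss. Finally, you never address condition (I) (weak $(1,1)$ boundedness), which the paper supplies by citation; that is a smaller omission but still needed for the definition of an SCZO of type $(\infty,\alpha)$.
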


\begin{thm}\label{thm:SGamma}
	Let $0<\gamma \leq 1/2$, $V\in RH_{d/2}$ and satisfying~\eqref{eq-suavVgamma} for some $0< \alpha \leq 1$. Then, the operator $S_\gamma$ is bounded on $BMO^\beta_\rho (w)$ for $0\leq\beta < \alpha$, and any weight $w\in A^\rho_\infty \cap D_\mu^\rho$ such that $1\leq \mu <1+ \frac{\alpha-\beta}{d}$.
\end{thm}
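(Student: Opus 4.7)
The plan is to identify $S_\gamma$ as a Schr\"odinger--Calder\'on--Zygmund operator of type $(\infty,\delta)$ for some $\delta$ with $\beta<\delta<\alpha$, and then invoke the general boundedness result stated at the end of Section~\ref{sec-pre}. The proof therefore reduces to verifying suitable size and smoothness bounds on the integral kernel
\[
K_\gamma(x,y) \;=\; V^\gamma(x)\,\nabla_x \Gamma_{\frac{1}{2}+\gamma}(x,y),
\]
where $\Gamma_s$ denotes the kernel of $L^{-s}$. The crucial input is the sharpened estimates for the fundamental solution of $L_\tau = -\Delta + V + i\tau$ obtained in Section~\ref{sec-Gamma,} under $V\in RH_{d/2}$ together with \eqref{eq-tamV}, which, as already observed, is implied by \eqref{eq-suavVgamma}.

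First I would use the Balakrishnan/subordination formula to write $L^{-\frac12-\gamma}$ as an integral of resolvents $(L+i\tau)^{-1}$, so that $\nabla_x \Gamma_{1/2+\gamma}(x,y)$ appears as a $\tau$--integral of $\nabla_x \Gamma(x,y,\tau)$ against a fractional weight. Substituting the Section~\ref{sec-Gamma,} bounds on $\nabla_x\Gamma$ and using $V^\gamma(x)\le C\rho^{-2\gamma}(x)$, the $\tau$--integration produces
\[
|K_\gamma(x,y)| \;\le\; \frac{C_N}{|x-y|^d}\left(1+\frac{|x-y|}{\rho(x)}\right)^{-N}
\]
for arbitrary $N>0$. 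The natural prefactor $|x-y|^{2\gamma-d}\rho^{-2\gamma}(x)$ collapses to $|x-y|^{-d}$ when $|x-y|\le\rho(x)$, because $(|x-y|/\rho(x))^{2\gamma}\le 1$, whereas for $|x-y|>\rho(x)$ the rapid decay already carried by the fundamental solution yields as much $\rho$--decay as desired.

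For the H\"older smoothness condition I would differentiate once more in $y$ under the $\tau$--integral, apply the gradient-in-$y$ estimates on $\nabla_x\Gamma(x,y,\tau)$ from Section~\ref{sec-Gamma,}, and interpolate between the size and gradient bounds to reach any exponent $\delta\in[0,1)$. This yields
\[
|K_\gamma(x,y)-K_\gamma(x,y')| \;\le\; \frac{C_N}{|x-y|^d}\left(\frac{|y-y'|}{|x-y|}\right)^{\!\delta}\left(1+\frac{|x-y|}{\rho(x)}\right)^{\!-N}
\]
whenever $|y-y'|<|x-y|/2$, for every $N>0$. Choosing any $\delta\in(\beta,\alpha)$ places $S_\gamma$ inside the SCZ--$(\infty,\delta)$ class, and the general theorem of Section~\ref{sec-pre} then delivers boundedness on $BMO^\beta_\rho(w)$ for every $w\in A^\rho_\infty\cap D^\rho_\mu$ with $1\le\mu<1+(\alpha-\beta)/d$.

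The main obstacle I foresee is the quantitative bookkeeping of the $\rho$--decay: one has to track carefully how much $(1+|x-y|/\rho(x))^{-N}$ decay is actually generated by the combined use of $V^\gamma(x)\lesssim \rho^{-2\gamma}(x)$ and the Section~\ref{sec-Gamma,} estimates, so that it survives being paired against a weight in $D^\rho_\mu$. The threshold $\mu<1+(\alpha-\beta)/d$ is precisely what the general SCZ theorem requires in order to absorb the polynomial growth of $w$ into the decay of $K_\gamma$. Apart from this accounting, no fundamentally new ingredient beyond the ones needed for $T_\gamma$ in Theorem~\ref{thm:TGamma} is required, since the extra $\nabla$ in the definition of $S_\gamma$ is compensated for by the additional half-power of $L^{-1}$ and by the improved decay coming from \eqref{eq-tamV}.
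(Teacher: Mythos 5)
Your plan to verify the SCZ conditions and then invoke the general theorem from Section~\ref{sec-pre} is the right skeleton, but the proposal has a genuine gap: you never check the $T1$ condition \eqref{condT1}, which is an explicit and indispensable hypothesis of Theorem~\ref{teo-T1}, not a consequence of the kernel estimates. Being a Schr\"odinger--Calder\'on--Zygmund operator of type $(\infty,\delta)$ alone does \emph{not} yield boundedness on $BMO^\beta_\rho(w)$; one must separately prove that $|S_\gamma 1(x)-S_\gamma 1(z)|\le C(|x-z|/\rho(x))^\alpha$ for $|x-z|\le\rho(x)/2$. In the paper this is the substantive part of the argument: one introduces the classical kernel $\mathbf{H}_{\gamma+1/2}$ of $\nabla(-\Delta)^{-\gamma-1/2}$, exploits the vanishing mean of $\mathbf{H}_{\gamma+1/2}$ over balls $B(x,\rho(x))$, and uses the comparison estimates of Lemma~\ref{lem-Hnu}~(c)--(d) (the parts controlling $\mathcal{D}_\nu=\mathcal{H}_\nu-\mathbf{H}_\nu$) together with the H\"older regularity of $V^\gamma$. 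None of that appears in your write-up, and without it the application of Theorem~\ref{teo-T1} is unjustified.

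There are two secondary inaccuracies worth flagging. First, the smoothness condition \eqref{suav-puntual} in the SCZ definition is in the \emph{first} variable; the kernel is $V^\gamma(x)\mathcal{H}_{\gamma+1/2}(x,y)$ and the factor $V^\gamma(x)$ is only H\"older-$\alpha$, so you cannot ``interpolate to reach any exponent $\delta\in[0,1)$''---the smoothness of the full kernel saturates at $\alpha$, which is why the paper proves type $(\infty,\alpha)$ exactly (and this is still enough since $\sigma=\min\{\delta,\alpha\}=\alpha$). Second, condition $(I)$ of the SCZ definition, the weak type $(1,1)$ estimate, is also part of the hypotheses and must be cited (in the paper it comes from Theorems~1 and~3 of~\cite{bongioanni2019weighted}); your proposal omits it as well.
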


For the singular Riesz transforms $\RS_1$ and $\RS_2$ we have the following results.

\begin{thm}\label{thm:R1nuevo}
	Let $V\in RH_{d/2}$  and such that  $V(x)\leq C/\rho^2(x)$. Then, for any 
	$0\leq\beta<1$, $\RS_1$ is bounded on $BMO_{\rho}^{\beta}(w)$ as long as $w\in A_{\infty}^{\rho}\cap D_{\mu}^{\rho}$, with $\displaystyle 1\leq \mu < 1+ \frac{1-\be}{d}$. 
\end{thm}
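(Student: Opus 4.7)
The plan is to invoke the general boundedness theorem for Schr\"odinger--Calder\'on--Zygmund operators of type $(\infty,\delta)$ announced in Section~\ref{sec-pre} (the consequence of Theorem~2 in~\cite{bongioanni2019weighted}). That result guarantees that any such operator is bounded on $BMO_\rho^\beta(w)$ whenever $0\leq\beta<\delta$ and $w\in A_\infty^\rho\cap D_\mu^\rho$ with $1\leq\mu<1+(\delta-\beta)/d$. Thus, under the hypotheses $V\in RH_{d/2}$ and $V(x)\leq C/\rho(x)^2$, it suffices to prove that $\RS_1=\nabla L^{-1/2}$ is of type $(\infty,\delta)$ for every $\delta\in(0,1)$; choosing $\delta$ close enough to $1$ then accommodates any $\beta\in[0,1)$ and any $\mu$ in the claimed range.

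Three ingredients must be verified: an $L^{p_0}$-boundedness of $\RS_1$, which is furnished by Shen's theorem under $V\in RH_{d/2}$ alone; a pointwise size estimate
\[
|K_1(x,y)|\ \les\ \frac{1}{|x-y|^d}\left(1+\frac{|x-y|}{\rho(x)}\right)^{-N};
\]
and a pointwise H\"older estimate
\[
|K_1(x,y)-K_1(x',y)|\ \les\ \frac{|x-x'|^\delta}{|x-y|^{d+\delta}}\left(1+\frac{|x-y|}{\rho(x)}\right)^{-N},\qquad |x-x'|\leq\tfrac12|x-y|,
\]
with $N$ arbitrarily large, together with the analogous estimate in the $y$ variable. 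The starting point is the subordination formula
\[
\RS_1 f(x)\ =\ \frac{1}{\sqrt{\pi}}\int_0^\infty \nabla_x e^{-tL}f(x)\,\frac{dt}{\sqrt{t}},
\]
equivalently a Cauchy-type integral in $\tau$ of the fundamental solution $\Ga_\tau$ of $L_\tau=-\Delta+V+i\tau$ studied in Section~\ref{sec-Gamma,}. Differentiating in $x$, plugging in the pointwise bounds on $\Ga_\tau$, $\nabla_x\Ga_\tau$ and $\nabla_x^2\Ga_\tau$ obtained there under the size hypothesis $V\leq C/\rho^2$, and integrating in $t$ delivers the size bound directly and the smoothness bound after a standard decomposition.

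The main obstacle is the H\"older bound with exponent $\delta$ close to $1$. Since no regularity of $V$ itself is available, the second derivative $\nabla_x^2\Ga_\tau$ cannot be extracted from differentiation of the equation and must be controlled purely from the pointwise size hypothesis $V\leq C/\rho^2$; this is exactly the content of the sharpened fundamental solution estimates of Section~\ref{sec-Gamma,}. To transform those bounds into H\"older regularity of $K_1$, I would split the $t$-integral at the scale $t\sim|x-x'|^2$: on $0<t<|x-x'|^2$ the gradient estimate is used twice (at $x$ and at $x'$) and the difference is bounded trivially, while on $t>|x-x'|^2$ one applies a mean value argument along the segment $[x,x']$ together with the second-gradient bound. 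Summing the two contributions yields the decay $|x-x'|^\delta/|x-y|^{d+\delta}$ for any $\delta<1$, the restriction $\delta<1$ being forced solely by the absent smoothness of $V$. The exponential-type tail in $\rho(x)$ survives the $t$-integration, and the smoothness in the $y$ variable follows from the corresponding mixed-derivative estimates on $\Ga_\tau$. With the three conditions in hand the general theorem applies and Theorem~\ref{thm:R1nuevo} follows.
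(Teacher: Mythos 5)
Your proposal overlooks a crucial hypothesis of Theorem~\ref{teo-T1}, namely the $T1$ condition~\eqref{condT1}. You paraphrase the general theorem as saying that any Schr\"odinger--Calder\'on--Zygmund operator of type $(\infty,\delta)$ is bounded on $BMO_\rho^\beta(w)$ for $0\leq\beta<\delta$ and the stated class of weights, but that is not what the theorem says: it requires \emph{in addition} that $|T1(x)-T1(z)|\leq C(|x-z|/\rho(x))^{\alpha}$ for $|x-z|\leq\rho(x)/2$ for some $\alpha>0$, and then the admissible range of $\beta$ is $[0,\min\{\delta,\alpha\})$, with the constraint on $\mu$ likewise governed by $\min\{\delta,\alpha\}$. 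This extra condition is not automatic; it is exactly what encodes the interplay between the Schr\"odinger operator and the classical Laplacian, and it is the one place in the whole argument where the hypothesis $V\leq C/\rho^2$ is used to gain a full power of $|x-z|/\rho(x)$. Without it the general theorem simply does not apply, so your proof does not close.

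The way the paper verifies the $T1$ condition is by comparing $\mathcal{H}_{1/2}$ (the kernel of $\nabla L^{-1/2}$) with the kernel $\mathbf{H}_{1/2}$ of the classical Riesz transform $\nabla(-\Delta)^{-1/2}$: part~\eqref{item-Hnu-c} of Lemma~\ref{lem-Hnu} gives $|\mathcal{D}_{1/2}(x,y)|\lesssim(|x-y|/\rho(x))|x-y|^{-d}$ and part~\eqref{item-Hnu-d} gives the corresponding H\"older estimate for $\mathcal{D}_{1/2}$ near the diagonal, both of which rest on Lemma~\ref{lem:Gamma}~\ref{tamagradgammatau-gammatau0}) and \ref{item:suavgraddifdedif}). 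One then splits $\RS_1 1(x)-\RS_1 1(z)$ as in the proof of Theorem~\ref{teo:R2BMO}, using the cancellation of the classical Riesz kernel over symmetric annuli and these difference estimates on the region $|x-y|<\rho(x)$, plus a tail estimate from part~\eqref{item-Hnu-b} on $|x-y|\geq\rho(x)$, taking care of the principal value since $\RS_1$ is a genuine singular integral. Your proposed kernel estimates (size and H\"older regularity of $K_1$ itself, via the spectral representation and the sharpened fundamental-solution bounds) track the content of Lemma~\ref{lem-Hnu} parts~\eqref{item-Hnu-a} and~\eqref{item-Hnu-b} and are essentially the paper's route there, but they carry no information about $\RS_1 1$ and so cannot supply the missing piece. (You also state the first requirement as $L^{p_0}$-boundedness rather than the weak type $(1,1)$ that the paper's definition of SCZO uses; the paper cites Theorem~7.3 of~\cite{BCH3_MR3541813} for that.) The argument can be repaired, but the $T1$ verification via the comparison kernels $\mathcal{D}_{1/2}$ must be added.
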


\begin{thm}\label{teo:R2BMO}
	Let $V\in RH_{d/2}$ and  satisfying \eqref{eq-suavV} for some $0<\alpha\leq 1$. If  $0\leq \beta < \alpha$, $\RS_2$ is bounded on $BMO^{\beta}_{\rho}(w)$ as long as $w\in A_\infty^{\rho}\cap D_{\mu}^{\rho}$, with $1\leq\mu<1+\frac{\alpha-\beta}{d}$.
\end{thm}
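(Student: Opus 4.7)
The plan is to invoke the general theorem from Section~\ref{sec-pre} (a consequence of Theorem~2 of \cite{bongioanni2019weighted}), which yields boundedness on $BMO_\rho^\beta(w)$ for any Schr\"odinger--Calder\'on--Zygmund operator of type $(\infty,\delta)$ as long as $\beta<\delta$ and $w\in A_\infty^\rho\cap D_\mu^\rho$ with $1\le \mu<1+(\delta-\beta)/d$. Since the hypotheses of the theorem guarantee $\mu<1+(\alpha-\beta)/d$, one may select $\delta$ strictly between $\beta$ and $\alpha$, close enough to $\alpha$, so that $\mu<1+(\delta-\beta)/d$ still holds; the task then reduces to checking that $\RS_2$ is a Schr\"odinger--Calder\'on--Zygmund operator of type $(\infty,\delta)$ for that $\delta$.

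The verification has three parts. The $L^p$-boundedness of $\RS_2$ for some $p>1$ follows at once from Shen's result in \cite{shen}, because \eqref{eq-suavV} implies \eqref{eq-tamV} (as noted in Section~\ref{sec-intro}), and hence $\RS_2$ is in fact bounded on every $L^p$, $1<p<\infty$. The size and $\rho$-decay of the kernel $K_2(x,y)=\nabla_x^2\,\Gamma(x,y)$, namely
\[
|K_2(x,y)|\les \frac{1}{|x-y|^d}\Bigl(1+\tfrac{|x-y|}{\rho(x)}\Bigr)^{-N}\quad\text{for every }N,
\]
would be obtained by representing the fundamental solution $\Gamma$ of $L$ as an integral of the fundamental solutions $\Gamma_\tau$ of $L_\tau=-\Delta+V+i\tau$, and invoking the refined pointwise bounds on $\Gamma_\tau$ and its derivatives proved in Section~\ref{sec-Gamma,}, which are valid precisely under $V\in RH_{d/2}$ and \eqref{eq-tamV}.

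The main obstacle, and the most delicate step, is the H\"older estimate $|K_2(x,y)-K_2(x',y)|\les|x-x'|^\delta|x-y|^{-d-\delta}$ (again with the $\rho$-decay factor) for $|x-x'|<|x-y|/4$. A direct mean-value argument would need a third derivative of $\Gamma$, for which no unconditional bound is available since $\RS_2$ is already a second-order object. Instead the plan is to proceed perturbatively, freezing the potential at a point $x_0$ close to $x$ and using the resolvent identity to decompose $L_\tau^{-1}$ in terms of the constant-coefficient operator $-\Delta+V(x_0)+i\tau$. The constant-coefficient piece is classical and gives the desired H\"older estimate by standard Calder\'on--Zygmund kernel theory applied to Bessel-type potentials; the correction involves the factor $V(z)-V(x_0)$, which by~\eqref{eq-suavV} contributes an extra $|z-x_0|^\alpha\rho(x_0)^{-2-\alpha}$. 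Integrating this bound against the refined $\Gamma_\tau$-estimates of Section~\ref{sec-Gamma,} then produces precisely the H\"older factor $|x-x'|^\delta$ for any $\delta<\alpha$, together with the required $\rho$-decay. This is the step where the full strength of~\eqref{eq-suavV} is used, which is why, as noted in the introduction, $\RS_2$ is the most delicate case of all.
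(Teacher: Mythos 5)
Your overall strategy of reducing the theorem to the general $T1$-type theorem from Section~\ref{sec-pre} is the right one, and your parameter juggling (taking $\delta<\alpha$ close enough to $\alpha$ to preserve the weight range) is fine. But there is a critical omission: you have misstated Theorem~\ref{teo-T1}. That theorem does \emph{not} conclude boundedness on $BMO_\rho^\beta(w)$ merely from the Schr\"odinger--Calder\'on--Zygmund hypotheses; it additionally requires that $T$ satisfy the $T1$-condition~\eqref{condT1}, i.e.\ $|T1(x)-T1(z)|\le C\bigl(|x-z|/\rho(x)\bigr)^{\alpha'}$ for $|x-z|\le\rho(x)/2$, with some $\alpha'>\beta$. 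Your proposal never mentions this, and it is not automatic: an SCZ operator need not even have $T1$ well-defined pointwise. In fact, verifying~\eqref{condT1} for $\RS_2$ is the bulk of the paper's proof and is where most of the work lies. It requires comparing the kernel $\KS_2$ of $\RS_2$ with the kernel $\KC_2$ of the classical second-order Riesz transform, and proving quantitative size and H\"older estimates for the difference $\KS_2-\KC_2$ on the local scale $|x-y|\le\rho(x)$ (this is Lemma~\ref{lem-comparacionR2}); one then splits $\RS_2 1(x)-\RS_2 1(z)$ into a local piece handled via $\KS_2-\KC_2$, a piece involving $\KC_2$ alone (where the principal value cancels by antisymmetry), and a far-field piece handled by the pointwise smoothness of $\KS_2$. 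Without this step your argument does not yield the theorem.

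A secondary, genuinely different feature of your proposal is the method for the H\"older estimate of $K_2$: you propose freezing the potential at $x_0$ and perturbing around $-\Delta+V(x_0)+i\tau$, whereas the paper instead truncates $\Gamma(\cdot,y)$ by a cutoff $\eta_{x_0}$, writes $\Delta\bigl(\Gamma(\cdot,y)\eta_{x_0}\bigr)=g_1+g_2+g_3$, and shows that the only locally singular piece $g_1=\eta_{x_0}V\,\Gamma(\cdot,y)$ is $\textup{Lip}^\alpha$ with a quantitatively controlled norm (using~\eqref{eq-suavV} and~\eqref{eq-tamV}); then the standard representation $\KS_{2,1}=\RC_2(g_1)+c_d\,\mathbf{I}(g_1)$ together with $\textup{Lip}^\alpha$-boundedness of $\RC_2$ gives the smoothness of $\KS_2$ with exponent exactly $\alpha$, not just $\delta<\alpha$. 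Your frozen-coefficients route is plausible in spirit, but as written it is a sketch: you would need to control the difference kernel $(L_\tau^{-1}-(-\Delta+V(x_0)+i\tau)^{-1})(x,z)$ with the correct decay in $\tau$ and $\rho$, and then carry two extra derivatives through the resolvent iteration without a loss in the singularity order, which is precisely the delicacy the paper's localization argument is designed to sidestep. In any case, even granting the kernel smoothness, the argument still cannot be completed without the $T1$ verification.
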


Let us notice that the condition imposed to $V$ in Theorem~\ref{thm:R1nuevo} is the weakest and moreover we obtain the whole range $0\leq\beta<1$. The strongest condition, as it may be expected, is the one for $\RS_2$, $T_1$ and $S_1$, that is, all the second order Schr\"odinger Riesz transforms. 

Condition~\eqref{eq-tamV} may be seen as the limiting case when $\gamma\rightarrow 0$ of~\eqref{eq-suavVgamma}, for a fixed $\alpha$. On the other hand, $\RS_1$ can be obtained taking $\gamma=0$ in the expression of $S_\gamma$. Therefore, taking $\alpha=1$,  Theorem~\ref{thm:R1nuevo} can be understood as the limiting case of Theorem~\ref{thm:SGamma}.

Our main tool in proving these results is based on a general $T1$ type theorem given in~\cite{bongioanni2019weighted} that provides sufficient conditions on an operator to be bounded on $BMO^{\beta}_{\rho}(w)$. Before  stating this theorem we need to define a class of operators. For $0<\delta\leq1$ we will say that a linear operator $T$ is a Schr\"odinger-Calder\'on-Zygmund operator of type~$(\infty,\delta)$ if:

\begin{enumerate}
	\item[($I$)] $T$ is bounded from $L^1$ into $L^{1,\infty}$.
	\item[($II$)] $T$ has an associated kernel $K:\RR^d\times\RR^d\rightarrow\RR$, in the sense that
	\begin{equation*}
	Tf(x)=\int_{\RR^d} K(x,y)f(y)dy,\,\,\,\,
	f\in L_c^{\infty} \,\,\text{and a.e.}\,\,x\notin \text{supp}f.
	\end{equation*}
	Further,  
	for each $N>0$ there exists a constant $C_N$ such that
	\begin{equation}\label{TamPuntual}
	|K(x,y)| \leq
	\frac{C_N}{|x-y|^{d}} \left(1+ \frac{|x-y|}{\rho(x)}\right)^{-N},\,\,\, x\neq y, 
	\end{equation}		
	and there exists $C$ such that
	\begin{equation}\label{suav-puntual}
	|K(x,y)-K(x_0,y)|
	\leq C \frac{|x-x_0|^{\de}}{|x-y|^{d+\de}},\,\,\,\text{when}\,\,
	|x-y|>2|x-x_0|.
	\end{equation}
\end{enumerate}

It is worth noting that this definition is slightly different than the one presented in~\cite{bongioanni2019weighted}. In fact, the conditions proposed here are stricter, so any result given in~\cite{bongioanni2019weighted} for SCZO of type~$(\infty, \delta)$ remains true with this definition. We choose to work with this definition since it is easier to handle and all the operators involved in this paper will satisfy it. 

The precise statement we need is the following. 

\begin{thm}\label{teo-T1}
	Let $T$ be a Schr\"odinger-Calder\'on-Zygmund operator of type~$(\infty,\delta)$ such that for some $\al>0$,
	\begin{equation}\label{condT1}
	|T1(x)-T1(z)| \leq C \, \left(\frac{r}{\rho(x)}\right)^\al,
	\end{equation}
	for all $x$, $z\in \RR^d$ such that $|x-z|\leq\rho(x)/2$.
%	\begin{equation}\label{condT1}
%	\frac{1}{|B|}\int_B |T1-T1_B| \leq C \, \left(\frac{r}{\rho(x)}\right)^\al,
%	\end{equation}
%	for all $B=B(x,r)$, with $r\leq \rho(x)/ 2$.
	
	Then, if $0\leq\be<\sig = \min\{\de,\al\}$, the operator $T$ is bounded on $BMO^\be_\rho(w)$  for any $w\in A_\infty^\rho\cap D^\de_\mu$ with $1\leq\mu< 1 + \frac{\sig-\be}{d}$.
\end{thm}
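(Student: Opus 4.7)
The cleanest plan is to deduce the statement from Theorem~2 of \cite{bongioanni2019weighted}, exactly as the authors suggest when they remark that the present SCZO notion is strictly stronger than theirs. The only thing that needs checking is that our pointwise hypotheses \eqref{TamPuntual} and \eqref{suav-puntual}, together with $(I)$ and \eqref{condT1}, imply all hypotheses of that theorem, whose SCZO notion uses $L^\infty$-averaged rather than pointwise estimates. Both implications are immediate: a pointwise bound trivially yields the same bound in the essential supremum over an annulus, and \eqref{condT1} matches the $T1$ hypothesis used there. One would then quote the result.

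If I had to prove it from scratch, I would verify the two defining conditions of $BMO_\rho^\beta(w)$ for $Tf$ when $f\in BMO_\rho^\beta(w)$. For the oscillation condition \eqref{osc_acotada} on a ball $B=B(x_0,r)$ with $r\le\rho(x_0)$, I would split $f = (f-f_{B^*})\chi_{B^*} + (f-f_{B^*})\chi_{B^{*c}} + f_{B^*}$ with $B^*=2B$. The near piece is handled by $L^p$-boundedness of $T$ (obtained from $(I)$ and the Calder\'on--Zygmund kernel structure via \eqref{suav-puntual}) combined with a John--Nirenberg-type estimate for $BMO_\rho^\beta(w)$, relying on $w\in A_\infty^\rho$. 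The far piece is treated via the smoothness estimate \eqref{suav-puntual} and the pointwise characterisation \eqref{cond_Lip_local}--\eqref{cond_Lip_global}: a dyadic decomposition over annuli $2^{k+1}B\setminus 2^kB$ produces sums of $2^{k(\beta-\delta)}\,w(2^kB)/w(B)$, convergent exactly when $\mu<1+(\delta-\beta)/d$. The constant piece $f_{B^*}$ yields $|f_{B^*}|\,|T1(x)-T1(x_0)|$, which by \eqref{condT1} gives the required $|B|^{\beta/d}$ growth provided $\beta<\alpha$.

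The average condition \eqref{prom_acotado}, which (as noted in Section~\ref{sec-pre}) is only needed on balls $B(x_0,\rho(x_0))$, would be verified using \eqref{TamPuntual} with large $N$ to localise $Tf$ up to a rapidly decaying tail, combined with the global bound \eqref{cond_Lip_global} for $f$ and the doubling growth provided by $w\in D_\mu^\rho$. The main technical obstacle is the precise interplay between the kernel decay $|x-y|^{-d-\sigma}$ coming either from \eqref{suav-puntual} ($\sigma=\delta$) or from \eqref{condT1} ($\sigma=\alpha$) and the weight growth $(R/r)^{d\mu}$: the exponent $\sigma=\min\{\delta,\alpha\}$ in the conclusion appears because both estimates must be used, and convergence of the annular series forces exactly the stated range $1\le\mu<1+(\sigma-\beta)/d$ along with the sharpness restriction $\beta<\sigma$.
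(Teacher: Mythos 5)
Your primary route — quoting the $T1$ theorem from \cite{bongioanni2019weighted} and noting that the pointwise size/smoothness conditions here trivially imply the $L^\infty$-averaged ones used there — is exactly what the paper does; the paper cites Corollary~4 of that reference (with $s=\infty$, $\varepsilon=\alpha$) rather than Theorem~2, but that is the same result in the same source, so the approach coincides. The from-scratch sketch you append is a reasonable outline of how such a $T1$ theorem would be proved, but the paper does not reprove it and neither do you need to.
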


This result is contained in Corollary~4 of~\cite{bongioanni2019weighted} taking $s=\infty$, $\varepsilon=\al$ and checking that~\eqref{condT1} implies the $T1$ condition stated there.

\section{Estimates of the fundamental solution}\label{sec-Gamma}

For $\tau\in\RR$, let $\Gamma(x,y,\tau)$ and $\Gamma_0(x,y,\tau)$ be the fundamental solution of $-\Delta+V+i\tau$ and $-\Delta+i\tau$ respectively. 

Since we use techniques based on the representation of an operator in terms of $\Gamma$, we will need some new estimates, assuming stronger conditions on the potential $V$, namely, $V\in RH_{d/2}$ and satisfying \eqref{eq-tamV}. Under the only assumption $V\in RH_{d/2}$ it was shown in \cite{shen} (see Theorem~2.7 there) that for all $N>0$, there exists $C_N$ such that
\begin{equation}\label{estga}
|\Gamma(x,y,\tau)|\leq  \frac{C_N\left(1+\frac{|x-y|}{\rho(x)}\right)^{-N}}{(1+|\tau|^{1/2}|x-y|)^N |x-y|^{d-2}},
\end{equation}
for all $x$ and $y$.

In the next lemma we prove with the additional condition \eqref{eq-tamV} some improved estimates for the smoothness of $\Ga$, $\nabla \Ga$ and its difference with $\Ga_0$, the fundamental solution of $-\Delta +i\tau$. We will follow closely the techniques developed in \cite{shen} and sometimes we just indicate how the stated estimates are obtained from the stronger hypotheses. In what follows we will denote $\nabla_1$ to the gradient operator with respect to the first $d$-dimensional variable, and analogously we denote $\nabla_2$ when derivatives are taken with respect to the second one.

\begin{lem}\label{lem:Gamma}
	Let $V\in RH_{d/2}$, and suppose \eqref{eq-tamV}. Then the following estimates hold.
	\begin{enumerate}[i)]
		\item\label{item:lemgamma1} \label{suavgammatau} If $N>0$ there exists $C_N$ such that
		\begin{equation}\label{estgahmenosga}
		|\Gamma(x+h,y,\tau)-\Gamma(x,y,\tau)|\leq \frac{C_N|h|\left(1+\frac{|x-y|}{\rho(x)}\right)^{-N}}{(1+|\tau|^{1/2}|x-y|)^N |x-y|^{d-1}},
		\end{equation}
		for all $h$, $x$ and $y$, whenever $|h|<|x-y|/4$.
		
		\item\label{item:suavgradgammatau} If $N>0$ and $0<\de<1$, there exists $C_{N,\de}$ such that
		\begin{equation}\label{estgradgahmenosga}
		|\nabla_1\Gamma(x+h,y,\tau)-\nabla_1\Gamma(x,y,\tau)|\leq \frac{C_{N,\de}|h|^\de\left(1+\frac{|x-y|}{\rho(x)}\right)^{-N}}{(1+|\tau|^{1/2}|x-y|)^N |x-y|^{d-1+\de}},
		\end{equation}
		for all $h$, $x$ and $y$, whenever $|h|<|x-y|/4$.
		
		\item\label{item:estgamenosga0} If $N>0$ there exists $C_N$ such that
		\begin{equation}\label{estgamenosga0}
		|(\Gamma-\Gamma_0)(x,y,\tau)|\leq \left(\frac{|x-y|}{\rho(x)}\right)\frac{C_N}{(1+|\tau|^{1/2}|x-y|)^N |x-y|^{d-2}},
		\end{equation}
		for all $x$ and $y$ with $|x-y|\leq \rho(x)$.
		
		\item \label{tamagradgammatau-gammatau0} If $N>0$, there exists $C_N$ such that
		\begin{equation}\label{estgradgamenosga0}
		|\nabla_1(\Gamma-\Gamma_0)(x,y,\tau)|\leq \left(\frac{|x-y|}{\rho(x)}\right)\frac{C_{N}}{(1+|\tau|^{1/2}|x-y|)^N |x-y|^{d-1}},
		\end{equation}
		for all $x$ and $y$ with $|x-y|\leq \rho(x)$.
		
		\item \label{item:suavgraddifdedif} If $N>0$ and $0<\de<1$, there exists $C_{N,\de}$ such that
		\begin{equation}\label{estgraddifdedif}
		\begin{split}
		&|\nabla_1(\Gamma-\Gamma_0)(x+h,y,\tau)-\nabla_1(\Gamma-\Gamma_0)(x,y,\tau)| \\ & \hspace{90pt} \leq \left(\frac{|x-y|}{\rho(x)}\right) \frac{C_{N,\de}|h|^\de}{(1+|\tau|^{1/2}|x-y|)^N |x-y|^{d-1+\de}},
		\end{split}
		\end{equation}
		for all $h$, $x$ and $y$, whenever $|h|<|x-y|/4$ and $|x-y|\leq \rho(x)$.
	\end{enumerate}
 	Moreover,  in each of the above estimates  we can replace $\rho(x)$ by $\rho(y)$ on the right hand side of the inequality.
\end{lem}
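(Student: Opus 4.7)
The plan is to adapt the methods of~\cite{shen}, where analogous estimates are derived only under the weaker hypothesis $V\in RH_{d/2}$, and to explain how the strengthened assumption \eqref{eq-tamV} yields the improved factors $|x-y|/\rho(x)$ claimed in items~\ref{item:estgamenosga0}--\ref{item:suavgraddifdedif}. The common backbone is the second resolvent identity
\begin{equation*}
(\Gamma-\Gamma_0)(x,y,\tau)\ =\ -\int_{\RR^d}\Gamma(x,z,\tau)\,V(z)\,\Gamma_0(z,y,\tau)\,dz,
\end{equation*}
together with the pointwise bound \eqref{estga} for $\Gamma$ and the explicit size and smoothness estimates available for $\Gamma_0$ and $\nabla_1\Gamma_0$.

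For items~\ref{item:lemgamma1} and~\ref{item:suavgradgammatau} I would argue as in~\cite{shen}: fix $y$ and observe that $u(\cdot)=\Gamma(\cdot,y,\tau)$ solves $(-\Delta+V+i\tau)u=0$ on the ball $B(x,|x-y|/2)$, so interior Caccioppoli- and Schauder-type estimates combined with the size bound \eqref{estga} yield the Lipschitz estimate~\eqref{estgahmenosga} and the H\"older estimate~\eqref{estgradgahmenosga}. The only simplification is that when $|x-y|<\rho(x)$, assumption \eqref{eq-tamV} and Lemma~\ref{lem:mismacritica} give $\|V\|_{L^\infty(B(x,|x-y|/2))}\lesssim\rho(x)^{-2}$, so the reverse-H\"older machinery is no longer needed; when $|x-y|\geq\rho(x)$, the extra power of $\rho(x)/|x-y|$ claimed by the inequality can be absorbed into the decay factor $(1+|x-y|/\rho(x))^{-N}$ by taking $N$ larger.

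For item~\ref{item:estgamenosga0}, set $r=|x-y|\leq\rho(x)$ and split $\RR^d=B(x,r/2)\cup B(y,r/2)\cup R$ in the resolvent integral. In the first two regions Lemma~\ref{lem:mismacritica} gives $\rho(z)\sim\rho(x)$, hence $V(z)\lesssim\rho(x)^{-2}$ by \eqref{eq-tamV}; the elementary computation
\begin{equation*}
\int_{B(x,r/2)}\frac{V(z)}{|x-z|^{d-2}\,|z-y|^{d-2}}\,dz\ \lesssim\ \frac{1}{r^{d-2}\rho(x)^2}\int_0^{r/2}s\,ds\ \lesssim\ \frac{1}{r^{d-2}}\left(\frac{r}{\rho(x)}\right)^{2},
\end{equation*}
together with its analogue near $y$, produces the factor $(r/\rho(x))^2\leq r/\rho(x)$. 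In the outer region the strong decay in \eqref{estga}, the bound $V\lesssim\rho^{-2}$, and Lemma~\ref{lem:sobrero} yield a comparable contribution, while the dependence on $\tau$ is inherited from that of $\Gamma_0$. Items~\ref{tamagradgammatau-gammatau0} and~\ref{item:suavgraddifdedif} then follow by differentiating the resolvent identity in the first variable and applying the same scheme, invoking~\ref{item:suavgradgammatau} whenever a $\delta$-H\"older increment is needed.

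The main obstacle is~\ref{item:suavgraddifdedif}, where one must differentiate the resolvent identity and simultaneously take a H\"older-type increment. My plan is to split into the cases $|h|\leq|x-y|/8$ and $|h|>|x-y|/8$: in the first, further split the integral into $B(x,|x-y|/2)$ and its complement, applying the gradient-difference bound~\eqref{estgradgahmenosga} to $\nabla_1\Gamma(\cdot,z,\tau)$ inside and the pointwise gradient bound of~\ref{tamagradgammatau-gammatau0} outside; in the second, a triangle inequality and~\ref{tamagradgammatau-gammatau0} applied at $x+h$ and $x$ suffice. Finally, the claim that $\rho(y)$ may replace $\rho(x)$ on the right-hand side follows from \eqref{rox_vs_roy}: when $|x-y|\leq\rho(x)$ the two critical radii are comparable by Lemma~\ref{lem:mismacritica}, whereas for $|x-y|>\rho(x)$ the factor $(1+|x-y|/\rho(y))$ is controlled by $(1+|x-y|/\rho(x))^{N_0+1}$ via Lemma~\ref{lem:sobrero}, and this extra power is absorbed by choosing $N$ larger in the decay factor.
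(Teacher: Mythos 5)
Your overall plan is faithful to the paper's: items i) and ii) via Shen's interior estimates for solutions of $(-\Delta+V+i\tau)u=0$ combined with the size bound \eqref{estga}, and items iii)--v) via a perturbation identity together with the pointwise bound $V\lesssim\rho^{-2}$. One genuine difference is your choice of perturbation formula: you write $(\Gamma-\Gamma_0)(x,y,\tau)=-\int\Gamma(x,z,\tau)V(z)\Gamma_0(z,y,\tau)\,dz$, so differentiating in $x$ produces $\nabla_1\Gamma$ inside the integral, whereas the paper uses $\Gamma-\Gamma_0=-\int\Gamma_0 V\Gamma$ (formula \eqref{formulaGaGa0}) and therefore differentiates the explicit kernel $\Gamma_0$. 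Both versions are correct, but the paper's choice is more economical: $\nabla_1\Gamma_0$ and $\nabla_1^2\Gamma_0$ have clean closed-form bounds, while with your formula you must first upgrade the Caccioppoli estimate to a pointwise size bound on $\nabla_1\Gamma$ before it can be inserted in the integral.

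There is a genuine gap in your treatment of item v). Having fixed $|h|\leq|x-y|/8$, you propose to split the $z$-integral into $B(x,|x-y|/2)$ and its complement and to apply the H\"older bound \eqref{estgradgahmenosga} to $\nabla_1\Gamma(\cdot,z,\tau)$ on the inner region. But \eqref{estgradgahmenosga} requires $|h|<|x-z|/4$, and the ball $B(x,|x-y|/2)$ contains points $z$ with $|x-z|$ arbitrarily small (certainly $<4|h|$), where that hypothesis fails. The decomposition must be carried out at the scale of $|h|$, not $|x-y|$: one needs a separate region $B(x,2|h|)$ (and a corresponding one around $x+h$) where the two gradient terms are estimated individually by their size bounds, a middle annulus $2|h|<|x-z|<C|x-y|$ where the H\"older increment is used, and an outer region; this is exactly the structure of $I_1,\dots,I_4$ in the paper (following Lemma~4 of \cite{BHS-RieszJMAA}). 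Relatedly, your reference to ``the pointwise gradient bound of iv)'' for the outer region is misplaced: iv) bounds $\nabla_1(\Gamma-\Gamma_0)$, not the factor $\nabla_1\Gamma$ that appears in your differentiated identity; what is actually needed there is the size bound on $\nabla_1\Gamma$ coming from the Caccioppoli estimate \eqref{est_sup_gradu}. The rest of your sketch (the inner computation for iii) giving $(r/\rho)^2$, the large-$|h|$ case of v) via the triangle inequality and iv), and the $\rho(x)\leftrightarrow\rho(y)$ replacement via Lemmas~\ref{lem:mismacritica} and \ref{lem:sobrero}) is sound.
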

\begin{proof}
	For \ref{suavgammatau}) we use inequality~(4.8) in \cite{shen} to assert that if $u$ is a solution of
	\begin{equation}\label{usolLtau}
	(-\Delta+V+i\tau)u = 0,
	\end{equation}
	on the ball $B=B(x,R)$, then if $z\in \frac{1}{2}B$, we have for some constant $C$,
	\begin{equation*}
	|\nabla u(z)| \leq C \sup_{B}|u| \left(\int_{B} \frac{V(\xi)}{|z-\xi|^{d-1}} d\xi+\frac{1}{R}\right).
	\end{equation*}
	
	Now, if we use the additional condition on $V$, we have for $\xi\in B$,
	\begin{equation*}
	V(\xi)\leq \frac{C}{\rho^2(\xi)}\leq \frac{C}{\rho^2(x)} \left(1+\frac{|x-\xi|}{\rho(x)}\right)^{2N_0} \leq \frac{C}{\rho^2(x)} \left(1+\frac{R}{\rho(x)}\right)^{2N_0}.
	\end{equation*}
	
	Then, 
	\begin{equation}\label{est_sup_gradu}
	\begin{split}
	|\nabla u(z)| & \leq \frac{C}{R} \left(1+\frac{R}{\rho(x)}\right)^{2N_0}\frac{R^2}{\rho^2(x)} \sup_{B}|u|\\ & \leq \frac{C}{R} \left(1+\frac{R}{\rho(x)}\right)^{2N_0+2}\sup_{B}|u|.
	\end{split}
	\end{equation}
	
	Now we fix $x$ and $y$ in $\RR^d$, with $x\neq y$, and consider the function $u(z)=\Ga(z,y,\tau)$, a solution of $-\Delta+V+i\tau$ on the ball $B(x,\frac{|x-y|}{2})$. If $0<h<\frac{|x-y|}{4}$, we have
	\begin{equation}\label{gahmenosga}
	|\Ga(x+h,y,\tau)-\Ga(x,y,\tau)| = |u(x+h)-u(x)|\leq h |\nabla u(z)|,
	\end{equation}
	for some $z\in B(x,\frac{|x-y|}{4})$ (in the segment with endpoints on $x$ and $x+h$). Then, we use \eqref{est_sup_gradu} with $R=\frac{|x-y|}{2}$ to get
	\begin{equation*}
	|\nabla u(\xi)| \leq \frac{C}{|x-y|} \left(1+\frac{|x-y|}{\rho(x)}\right)^{2N_0+2}\sup_{\xi\in B(x,\frac{|x-y|}{4}) }|\Ga(\xi,y,\tau)|.
	\end{equation*}
	Therefore, we can obtain \eqref{estgahmenosga} as consequence of \eqref{estga} by choosing $N$ large enough and the fact that $|\xi-y|\simeq |x-y|$ for $\xi\in B(x,|x-y|/4)$.
	
	To prove \ref{item:suavgradgammatau}) we use equation (4.7) in \cite{shen}. Let $u$ be a solution of \eqref{usolLtau} on the ball $B=B(x,2R)$, for some fixed $R$ and $\eta$ be a smooth function such that $\eta=1$ in $B(x,3R/2)$, $\eta=0$ outside $B$, $|\nabla\eta|\leq C/R$ and $|\Delta\eta|\leq C/R^2$ for some constant $C$. Then, for $z\in B(x,3R/2)$, we can write
	\begin{equation*}
	\begin{split}
	\nabla u(z) = & \int_{\RR^d}\nabla_1\Gamma_0(z,\xi,\tau)[-V(\xi)u(\xi)\eta(\xi) + u(\xi)\Delta\eta(\xi)]d\xi \\
	& \hspace{1cm} + 2 \int_{\RR^d}\nabla_1\nabla_2\Gamma_0(z,\xi,\tau)u(\xi)\nabla\eta(\xi)d\xi
	\end{split}
	\end{equation*}
	Therefore, if $0<|h|<R$,
	\begin{equation}\label{gradu}
	|\nabla u(x+h)-\nabla u(x)| \leq C \sup_B |u| \left(I_1\left[\frac{1}{\rho^2(x)}\left(1+\frac{R}{\rho(x)}\right)^{2N_0} + \frac{1}{R^2}\right] + \frac{I_2}{R}\right),
	\end{equation}
	where
	\begin{equation*}
	I_1 = \int_{B}|\nabla_1\Gamma_0(x+h,\xi,\tau)-\nabla_1\Gamma_0(x,\xi,\tau)|d\xi,
	\end{equation*}
	and
	\begin{equation*}
	I_2 = \int_{3R/2<|\xi-x|<2R}|\nabla_1\nabla_2\Gamma_0(x+h,\xi,\tau)-\nabla_1\nabla_2\Gamma_0(x,\xi,\tau)|d\xi
	\end{equation*}
	
	We then split $I_1 = \int_{B(x,2|h|)} + \int_{B\setminus B(x,2|h|)} = I_{11} + I_{12}$. For $I_{11}$ we use $|\nabla \Gamma_0(z,\xi,\tau)|\leq C |x-\xi|^{1-d}$  to obtain
	\begin{equation}\label{elh}
	I_{11} \leq C \int_{B(x,2|h|)} \frac{d\xi}{|x+h-\xi|^{d-1}} + \int_{B(x,2|h|)} \frac{dy}{|x-\xi|^{d-1}} \le C |h|,
	\end{equation}
	
	For $I_{12}$ we use the Mean Value Theorem and since $|\nabla_1^2 \Gamma_0(z,\xi,\tau)|\leq C|z-\xi|^{-d}$, we obtain,
	\begin{equation*}
	I_{12} \le C \int_{2|h|<|x-\xi|<2R} \frac{|h|}{|\zeta-\xi|^{d}}d\xi,
	\end{equation*}
	for some $\zeta$ in the segment from $x$ to $x+h$. Then,
	\begin{equation*}
	I_{12} \le C \int_{2|h|<|x-\xi|<2R} \frac{|h|^\de}{|x-\xi|^{d-1+\de}}d\xi \le C|h|^\de R^{1-\de}.
	\end{equation*}
	From last estimate and \eqref{elh} we have
	\begin{equation}\label{I1}
	I_1\le C |h|^\de R^{1-\de}.
	\end{equation}
	
	To deal with $I_2$ we apply again the Mean Value Theorem and that  $|\nabla_1^2\nabla_2\Gamma_0(z,\xi,\tau)|\le C|z-\xi|^{-d-1}$. Thus, 
	\begin{equation}\label{I2}
	I_2\le C |h| R^{-1}\leq  C |h|^\de R^{-\de}.
	\end{equation}
	
	Now, from \eqref{gradu}, \eqref{I1} and \eqref{I2}, we have
	\begin{equation*}
	|\nabla u(x+h)-\nabla u(x)| \le C \sup_B |u| \frac{|h|^\de}{R^{1+\de}} \left(1+\frac{R}{\rho(x)}\right)^{2N_0+2},
	\end{equation*}
	
	Now, if we consider the function $u(z)=\Gamma(z,y,\tau)$ in the ball $B(x,\frac{|x-y|}{4})$, from last inequality and~\eqref{estga}, we get \eqref{estgradgahmenosga}.
	
	The proof of \ref{item:estgamenosga0}) follows the lines of the proof of Lemma~4.5 in \cite{shen}. There, starting from the formula	\begin{equation}\label{formulaGaGa0}
	\Ga-\Ga_0 = - \int_{\RR^d} \Ga_0 V \Ga,
	\end{equation}
	(see page 540 in \cite{shen}) the author obtains
	\begin{equation*}
	|\Gamma(x,y,\tau)-\Gamma_0(x,y,\tau)|\leq I_1 + I_2 + I_3 + I_4,
	\end{equation*}
	where
	\begin{equation*}
	I_1 \leq \frac{C}{{(1+|\tau|^{1/2}|x-y|)^N |x-y|^{d-2}}} \int_{B(x,|x-y|/2)} \frac{V(z)}{|z-x|^{d-2}}dz,
	\end{equation*}
	\begin{equation*}
	I_2 \leq \frac{C}{{(1+|\tau|^{1/2}|x-y|)^N|x-y|^{d-2}}} \int_{B(y,|x-y|/2)} \frac{V(z)}{|z-x|^{d-2}}dz,
	\end{equation*}
	\begin{equation*}
	I_3 \leq \frac{C}{{(1+|\tau|^{1/2}|x-y|)^N}}\int_{|x-y|/2<|z-y|<\rho(y)/2} \frac{V(z)}{|z-y|^{2d-4}}dz,
	\end{equation*}
	and
	\begin{equation*}
	I_4 \leq \frac{C\rho^N(y)}{{(1+|\tau|^{1/2}|x-y|)^N}}\int_{|z-y|>\rho(y)/2} \frac{V(z)}{|z-y|^{2d-4+N}}dz,
	\end{equation*}
	for some $N$ large enough.
	
	Using \eqref{eq-tamV} and the fact that $\rho(x)\simeq\rho(z)$ for all $z\in B(x,|x-y|/2) \subset B(x,\rho(x))$ (see Lemma~\ref{lem:mismacritica}), we have
	\begin{equation*}
	\int_{B(x,|x-y|/2)} \frac{V(z)}{|z-x|^{d-2}}dz \leq C \left(\frac{|x-y|}{\rho(x)}\right)^2,
	\end{equation*}
	and thus, $I_1$, $I_2$ are bounded by
	\begin{equation*}
	\frac{\left(\frac{|x-y|}{\rho(x)}\right)^2}{{(1+|\tau|^{1/2}|x-y|)^N  |x-y|^{d-2}}}.
	\end{equation*}
	
	To deal with $I_3$, we have
	\begin{equation*}
	\begin{split}
	\int_{|x-y|/2<|z-y|<\rho(y)/2} \frac{V(z)}{|z-y|^{2d-4}}dz & \leq \frac{C}{\rho^2(x)} \int_{|x-y|}^{\rho(y)}t^{-d+3}dt\\
	&  \leq \frac{C}{\rho^2(x)|x-y|^{d-3}} \int_{0}^{\rho(y)}dt	
	\\ &\leq C\frac{\left(\frac{|x-y|}{\rho(x)}\right)}{|x-y|^{d-2}},
	\end{split}
		\end{equation*}
	since $d\geq 3$ and $|x-y|\leq\rho(x)\simeq\rho(y)$.
	
	The same estimate can be obtained for $I_4$, since by \eqref{eq-tamV} and \eqref{izqroxvsroy}
	\begin{equation*}
	V(z) \leq \frac{C}{\rho^2(z)} \leq \frac{C}{\rho^2(y)} \left(1+\frac{|y-z|}{\rho(y)}\right)^{2N_0}\leq \frac{C}{\rho^2(y)} \left(\frac{|y-z|}{\rho(y)}\right)^{2N_0},
	\end{equation*}
	for all $z$ such that $|z-y|>\rho(y)/2$. Then,
	\begin{equation*}
	\begin{split}
	\rho^N(y) & \int_{|z-y|>\rho(y)/2} \frac{V(z)}{|z-y|^{2d-4+N}}dz\\
	& \leq C\rho^{N-2N_0-2}(y)\int_{|z-y|>\rho(y)/2} \frac{dz}{|z-y|^{2d-4+N-2N_0}}\\
	& \leq C \rho^{-d+2}(y).
	\end{split}
	\end{equation*}
	Therefore, since $|x-y|<\rho(x)$,  $d\geq 3$ and $\rho(y)\simeq\rho(x)$, we have
	\begin{equation*}
	\rho^{-d+2}(y) \leq C \left(\frac{|x-y|}{\rho(x)}\right) \frac{1}{|x-y|^{d-2}}.
	\end{equation*}
	
	In order to prove \ref{tamagradgammatau-gammatau0}) we follow the proof of Lemma~5.8 in \cite{shen}. From \eqref{formulaGaGa0} we may write
	\begin{equation}\label{eq-graddif}
	\nabla_1(\Ga-\Ga_0)(x,y,\tau) = -\int_{\RR^d} \nabla_1\Ga_0(x,z,\tau). V(z) \Ga(z,y,\tau) \,dz.
	\end{equation}
	
	Then if $|x-y|<\rho(x)$, we have
	\begin{equation*}
	|\nabla_1(\Ga-\Ga_0)(x,y,\tau)| \leq I_1 + I_2 + I_3,
	\end{equation*}
	where
	\begin{equation*}
	I_1 \leq  \frac{C}{{(1+|\tau|^{1/2}|x-y|)^N |x-y|^{d-2}}} \int_{B(x,|x-y|/4)} \frac{V(z)}{|z-x|^{d-1}}dz,
	\end{equation*}
	\begin{equation*}
	I_2 \leq \frac{C}{{(1+|\tau|^{1/2}|x-y|)^N|x-y|^{d-1}}} \int_{B(y,|x-y|/4)} \frac{V(z)}{|z-x|^{d-2}}dz,
	\end{equation*}
	and
	\begin{equation*}
	I_3 \leq \frac{C}{{(1+|\tau|^{1/2}|x-y|)^N}}\int_{|z-y|>|x-y|/4} \frac{V(z)}{|z-y|^{2d-3}}\left(1+\frac{|z-x|}{\rho(x)}\right)^{-N}dz,
	\end{equation*}
	for some $N$ large enough. Thus using \eqref{eq-tamV}, we can perform similar calculations as in the proof of \ref{item:estgamenosga0}) to  get \eqref{estgradgamenosga0}.
	
	Finally, to obtain \ref{item:suavgraddifdedif}) we may use again \eqref{eq-graddif}. Then, as in the proof Lemma~4 in \cite{BHS-RieszJMAA}, if $N>0$, $0<\de<1$, $x$, $y$, and $h$, with $|h|<|x-y|/4$ and $|x-y|<\rho(x)$, we have
	\begin{equation*}
	|\nabla_1(\Gamma-\Gamma_0)(x+h,y,\tau)-\nabla_1(\Gamma-\Gamma_0)(x,y,\tau)| \leq C_N(I_1 + I_2 + I_3 + I_4),
	\end{equation*}
	where
	\begin{equation*}
	I_1 \leq \frac{C}{{(1+|\tau|^{1/2}|x-y|)^N |x-y|^{d-2}}} \int_{B(x,2h)} \frac{V(z)}{|z-x|^{d-1}}dz,
	\end{equation*}
	\begin{equation*}
	I_2 \leq C \frac{|h|^\de}{{(1+|\tau|^{1/2}|x-y|)^N|x-y|^{d-2}}} \int_{B(x,|x-y|)} \frac{V(z)}{|z-x|^{d-1+\de}}dz,
	\end{equation*}
	\begin{equation*}
	I_3 \leq C \frac{|h|}{{(1+|\tau|^{1/2}|x-y|)^N|x-y|^d}} \int_{|x-y|/2<|z-x|<2|x-y|} \frac{V(z)}{|z-y|^{d-2}}dz,
	\end{equation*}
	and
	\begin{equation*}
	I_4 \leq C \frac{|h|}{{(1+|\tau|^{1/2}|x-y|)^N}|x-y|}\int_{|z-y|>2|x-y|} \frac{V(z)}{|z-x|^{2d-2}}dz.
	\end{equation*}
	 Notice that, this time, the last integral is convergent. Therefore, in a similar way as before, we use \eqref{eq-tamV} to derive \eqref{estgraddifdedif}.
	
	Finally, the last assertion is a consequence of~Lemma~\ref{lem:mismacritica} and  Lemma~\ref{lem:sobrero}.
	
\end{proof}

\begin{rem}
We notice that in~\ref{item:estgamenosga0}) we can obtain the factor $\left(\frac{|x-y|}{\rho(x)}\right)^2$ instead of $\frac{|x-y|}{\rho(x)}$ on the right hand side of \eqref{estgamenosga0} in the case of dimension $d>4$. In fact, the only problem comes from the estimates of $I_3$ and $I_4$ where the function $|z-y|^{-2d+4}$ is integrable at infinity only if $d>4$. The given estimate for~\ref{item:estgamenosga0}) is sharp when $d=3$. Also, as it is clear from the poof, we may obtain the factor $\left(\frac{|x-y|}{\rho(x)}\right)^2$ in \ref{item:suavgraddifdedif}) for any dimension, while in \ref{tamagradgammatau-gammatau0}) it would require $d>3$. Nevertheless, the stated estimates are sufficient to our purpose.
\end{rem}

\section{Scrh\"odinger Riesz transforms involving $V$}\label{sec-reg-conV}

In this section we are going to prove Theorem~\ref{thm:TGamma} and Theorem~\ref{thm:SGamma}, concerning the continuity of $T_\gamma=V^{\gamma}L^{-\gamma}$ for $0<\gamma\leq 1$, and $S_\gamma=V^{\gamma}\nabla L^{-\gamma-1/2} $ for $0<\gamma\leq 1/2$, acting on $BMO_{\rho}^{\beta}(w)$ spaces.

Since both families of operators consist of an integral operator point-wisely multiplied by $V^\gamma$, it is natural to assume certain kind of smoothness for $V^\gamma$.

The path we will follow in proving the announced results is the same in both cases. First, we obtain some estimates for the kernels of the integral operators, that is either $L^{-\gamma}$ or $\nabla L^{-\gamma-1/2}$, and then we show that $T_\gamma$ and $S_\gamma$ are Schr\"odinger-Calder\'on-Zygmund operators. The proof of the theorems are completed checking that the $T1$ condition is also satisfied, since an application of Theorem~\ref{teo-T1} gives the desired results. Let us point out that the estimates given in Lemma~\ref{lem:Gamma} will be essential to check all the needed requirements.

\subsection{Operators $T_\gamma=V^{\gamma}L^{-\gamma}$ for $0<\gamma\leq 1$.} First we are going to show that, under the additional hypothesis on the potential $V$, these operators are Schr\"odinger-Calder\'on-Zygmund of type~$(\infty,\alpha)$. In order to do this we observe that $T_\gamma$ can be written as

$$T_{\gamma} f (x) = \int_{\RR^d} V^{\gamma}(x) \JS_{\gamma}(x,y) f(y)dy,$$
where $\JS_\gamma$ is the kernel associated to the fractional integral operator $L^{-\gamma}$. Hence, it is natural to look for properties of this kernel. We summarize them in the following lemma. Here we denote $\JC_\gamma(x-y)$ the kernel associated to $(-\Delta)^{-\gamma}$.

\begin{lem}\label{lem-Jgamma}

  Let $ V\in RH_{d/2}$ and satisfying $V(x) \leq C/\rho^2(x)$. Then if $0< \gamma \leq 1$, we have the following estimates
  \begin{enumerate}[(a)]
  	\item\label{item-Jgam-a}
  	For any $N\geq 0$ there exists $C_N$ such that
  	\begin{equation*}
  	|\JS_\gamma(x,y)| \leq \frac{C_N}{|x-y|^{d-2\gamma}}  \left(1+\frac{|x-y|}{\rho(x)}\right)^{-N}.
  	\end{equation*}
  	\item\label{item-Jgam-b}
  	For any $N\geq 0$ there exists $C_N$ such that, if $|h|\leq  |x-y|/2$
  	\begin{equation*}
  	|\JS_\gamma(x,y)-\JS_\gamma(x+h,y)| \leq C \frac{|h|}{|x-y|^{d-2\gamma+1}}  \left(1+\frac{|x-y|}{\rho(x)}\right)^{-N}.
  	\end{equation*}
  	\item\label{item-Jgam-c}
  	 For $|x-y|\leq \rho(x)$ we have
  	\begin{equation*}
  	|\JS_\gamma(x,y)-\JC_\gamma(x-y)| \leq C \left(\frac{|x-y|}{\rho(x)}\right)\frac{ 1}{|x-y|^{d-2\gamma}}  .
  	\end{equation*}
  	\item\label{item-Jgam-d} For $|x-y|\leq \rho(x)$ we have
 
  \begin{equation}
  |\nabla_1(\JS_\gamma(x,y)-\JC_\gamma(x-y))| \leq C \left(\frac{|x-y|}{\rho(x)}\right)\frac{ 1}{|x-y|^{d-2\gamma+1}}.
  \end{equation}
  	
  \end{enumerate}
\end{lem}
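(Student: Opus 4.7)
The plan is to reduce all four estimates to Lemma~\ref{lem:Gamma} by representing $\JS_\gamma$ as a subordination integral of the fundamental solution $\Gamma(x,y,\tau)$ of $-\Delta+V+i\tau$, and analogously $\JC_\gamma$ in terms of $\Gamma_0$. For $\gamma=1$ the kernel $\JS_1(x,y)$ coincides with $\Gamma(x,y,0)$, $\JC_1(x-y)$ with $\Gamma_0(x,y,0)$, and each inequality becomes an immediate specialization of the corresponding estimate in Lemma~\ref{lem:Gamma} (with $\tau=0$). For $0<\gamma<1$ I shall work from an identity of the form
\begin{equation*}
\JS_\gamma(x,y)=c_\gamma\int_{\RR} |\tau|^{-\gamma}\,\Gamma(x,y,\tau)\,d\tau,
\end{equation*}
and the analogous one for $\JC_\gamma$, both of which follow from the spectral identity $\lambda^{-\gamma}=c_\gamma\int_{\RR}|\tau|^{-\gamma}(\lambda+i\tau)^{-1}d\tau$ for $\lambda>0$.

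The common mechanism is the scaling
\begin{equation*}
\int_{\RR}|\tau|^{-\gamma}\bigl(1+|\tau|^{1/2}|x-y|\bigr)^{-N}\,d\tau\;\simeq\;|x-y|^{2\gamma-2},
\end{equation*}
obtained by the change of variable $s=|\tau|^{1/2}|x-y|$ and convergent as soon as $N>2-2\gamma$. Estimate~\ref{item-Jgam-a} comes from inserting~\eqref{estga} in the subordination integral: the factor $(1+|x-y|/\rho(x))^{-N}$ is independent of $\tau$ and comes out of the integral, while the surviving $|x-y|^{-(d-2)}\cdot|x-y|^{2\gamma-2}$ yields $|x-y|^{-(d-2\gamma)}$. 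Estimate~\ref{item-Jgam-b} in the range $|h|<|x-y|/4$ is the same computation after replacing~\eqref{estga} by~\eqref{estgahmenosga}. For~\ref{item-Jgam-c} and~\ref{item-Jgam-d} I would write the difference $\JS_\gamma-\JC_\gamma$ as a subordination integral of $\Gamma-\Gamma_0$, respectively $\nabla_1(\Gamma-\Gamma_0)$, and apply~\eqref{estgamenosga0} and~\eqref{estgradgamenosga0}; the extra factor $|x-y|/\rho(x)$ supplied by those estimates is $\tau$-independent and is preserved by the integration, yielding the claimed inequalities under the hypothesis $|x-y|\le\rho(x)$.

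Two minor technical points remain. For~\ref{item-Jgam-b} in the regime $|x-y|/4\le|h|\le|x-y|/2$, not covered by Lemma~\ref{lem:Gamma}~\ref{item:lemgamma1}, I would apply the triangle inequality and part~\ref{item-Jgam-a} at both $(x,y)$ and $(x+h,y)$; since $|x+h-y|\simeq|x-y|$ and $\rho(x+h)\simeq\rho(x)$ by Lemma~\ref{lem:mismacritica}, this gives $|\JS_\gamma(x,y)-\JS_\gamma(x+h,y)|\les|x-y|^{-(d-2\gamma)}$, which is controlled by $|h||x-y|^{-(d-2\gamma+1)}$ in that regime. Also, obtaining in parts~\ref{item-Jgam-a} and~\ref{item-Jgam-b} an arbitrarily large exponent $N$ on the factor $(1+|x-y|/\rho(x))^{-N}$ only requires applying Lemma~\ref{lem:Gamma} with some $N'\ge\max(N,\,3)$, which still keeps the $\tau$-integral absolutely convergent.

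The genuine obstacle is not an inequality but a piece of bookkeeping: the subordination identity and the interchange of the $\tau$-integral with the pointwise estimates of Lemma~\ref{lem:Gamma} must be justified rigorously. Both are made possible by the arbitrarily fast decay $(1+|\tau|^{1/2}|x-y|)^{-N}$ of $\Gamma$ in $\tau$, but they require choosing at every step an exponent $N$ large enough to make all the integrals absolutely convergent. Once this accounting is handled, all four statements fall out of the same scaling computation.
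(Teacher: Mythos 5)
Your proposal follows essentially the same approach as the paper: both rely on the subordination formula $L^{-\gamma}=-\frac{1}{2\pi}\int(-i\tau)^{-\gamma}(L+i\tau)^{-1}d\tau$, insert the estimates of Lemma~\ref{lem:Gamma} (and \eqref{estga}), and evaluate the $\tau$-integral by the change of variables $s=|\tau|^{1/2}|x-y|$. The paper simply cites~\cite{MR3180934_stinga} for part~\eqref{item-Jgam-a} instead of rederiving it, but the computation is the same. Your explicit treatment of the endpoint $\gamma=1$ (where $\JS_1(x,y)=\Gamma(x,y,0)$ and the subordination integral is no longer absolutely convergent) is actually a welcome supplement: the paper's formula is used for $0<\gamma\le 1$ without comment, yet the scalar integral $\int_0^\infty s^{1-2\gamma}(1+s)^{-N}\,ds$ diverges at $s=0$ when $\gamma=1$.

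One small slip to correct: in the regime $|x-y|/4\le|h|\le|x-y|/2$ you invoke Lemma~\ref{lem:mismacritica} to conclude $\rho(x+h)\simeq\rho(x)$, but that lemma requires $|h|\les\rho(x)$, and the statement of~\eqref{item-Jgam-b} places no restriction on $|x-y|$ relative to $\rho(x)$, so $|h|$ can be much larger than $\rho(x)$. The repair is routine: apply part~\eqref{item-Jgam-a} at $(x+h,y)$ in the form with $\rho(y)$ on the right-hand side (available by the final assertion of Lemma~\ref{lem:Gamma}), use $|x+h-y|\simeq|x-y|$, and then convert $(1+|x-y|/\rho(y))^{-M}$ into $(1+|x-y|/\rho(x))^{-N}$ via Lemma~\ref{lem:sobrero} by taking $M=N(N_0+1)$. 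With that adjustment the argument is complete.
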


\begin{proof}
	For~\eqref{item-Jgam-a} we refer to Section 4.7 in~\cite{MR3180934_stinga}, in fact, we only need  $V\in RH_{d/2}$. To prove~\eqref{item-Jgam-b} it is enough to consider $x$, $h$ and $y\in\RR^d$ such that $|h|\leq |x-y|/4$. Using the functional calculus formula
	\begin{equation}\label{eq:formulaespectral}
	L^{-\gamma}=-\frac{1}{2\pi} \int_{-\infty}^{\infty}(-i\tau)^{-\gamma}(L+i\tau)^{-1}d\tau,
	\end{equation}
	we have that
	\begin{equation*}
\JS_\gamma(x,y)=-\frac{1}{2\pi}\int_{-\infty}^{\infty}(-i\tau)^{-\gamma}	 \Gamma(x,y,\tau) d\tau.
	\end{equation*}
	Now, applying part~\ref{item:lemgamma1}) of Lemma~\ref{lem:Gamma}, 
	\begin{equation*}
	|\JS_\gamma(x,y)-\JS_\gamma(x+h,y)|\leq C_N \frac{|h|}{|x-y|^{d-1}} \left(1+\frac{|x-y|}{\rho(x)}\right)^{-N}\int_{0}^\infty \frac{\tau^{-\gamma}}{(1+\tau^{1/2}|x-y|)^N}  d\tau.
	\end{equation*}
	Setting $s=\tau|x-y|^2$ and computing the last integral we arrive to the desired estimate. Inequalities~\eqref{item-Jgam-c} and~\eqref{item-Jgam-d} follow applying again the spectral formula~\eqref{eq:formulaespectral} for $L$ and $-\Delta$, together with Lemma~\ref{lem:Gamma}, parts \ref{item:estgamenosga0}) and \ref{tamagradgammatau-gammatau0}) respectively.

\end{proof}

\begin{prop}\label{prop-VLesSCZ}
	Let $V\in RH_{d/2}$ and  $0<\gamma\leq 1$. If for some $0<\alpha\leq 1$, $V$ satisfies~\eqref{eq-suavVgamma}, then $T_\gamma$ is a  Schr\"odinger-Calder\'on-Zygmund operator of type~$(\infty,\alpha)$.
\end{prop}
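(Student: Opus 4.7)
The plan is to verify the three conditions defining a Schrödinger-Calderón-Zygmund operator of type $(\infty,\alpha)$ for the kernel $K_\gamma(x,y) = V^\gamma(x)\,\JS_\gamma(x,y)$.

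The size estimate \eqref{TamPuntual} is the easiest step: I combine the pointwise bound $V^\gamma(x)\leq C/\rho^{2\gamma}(x)$, which follows from \eqref{eq-suavVgamma} as observed in the introduction, with Lemma~\ref{lem-Jgamma}\eqref{item-Jgam-a}, and absorb the extra factor $(|x-y|/\rho(x))^{2\gamma}$ into the decay $(1+|x-y|/\rho(x))^{-N}$ by starting from a sufficiently large $N$.

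For the smoothness estimate \eqref{suav-puntual}, fix $x$, $x_0$, $y$ with $|x-y|>2|x-x_0|$ and split
\[
K_\gamma(x,y)-K_\gamma(x_0,y) = (V^\gamma(x)-V^\gamma(x_0))\,\JS_\gamma(x,y) + V^\gamma(x_0)\,(\JS_\gamma(x,y)-\JS_\gamma(x_0,y)).
\]
When $|x-x_0|<\rho(x)$, Lemma~\ref{lem:mismacritica} gives $\rho(x_0)\sim\rho(x)$: I bound the first summand via \eqref{eq-suavVgamma} together with Lemma~\ref{lem-Jgamma}\eqref{item-Jgam-a}, and the second summand via the Lipschitz estimate of Lemma~\ref{lem-Jgamma}\eqref{item-Jgam-b}, converting the factor $|x-x_0|$ to $|x-x_0|^\alpha$ by means of $|x-x_0|\leq|x-y|/2$ and $\alpha\leq 1$; in both summands any leftover power of $|x-y|/\rho(x)$ is swallowed by the decay.

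The opposite regime $|x-x_0|\geq\rho(x)$ is the main technical point, since \eqref{eq-suavVgamma} is no longer applicable. Here I estimate $K_\gamma(x,y)$ and $K_\gamma(x_0,y)$ separately using only the size bound and show each is of order $|x-x_0|^\alpha/|x-y|^{d+\alpha}$: the elementary inequality $t^{2\gamma}(1+t)^{-N}\leq C$ for $N\geq 2\gamma+\alpha$ reduces matters to $\rho(x)^\alpha\leq C|x-x_0|^\alpha$ and $\rho(x_0)^\alpha\leq C|x-x_0|^\alpha$. The first is precisely the hypothesis of this case, while the second follows from the right-hand inequality in \eqref{rox_vs_roy} combined with $|x-x_0|\geq \rho(x)$ and $N_0/(N_0+1)<1$, which together yield $\rho(x_0)\leq C|x-x_0|$. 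Finally, the weak-type $(1,1)$ requirement (I) will be obtained by combining the known $L^p$-boundedness of $T_\gamma$ under $V\in RH_{d/2}$ and \eqref{eq-tamV} (Shen~\cite{shen}) with the kernel estimates above and their symmetric versions in $y$ coming from the self-adjointness of $L^{-\gamma}$, via a standard Calderón-Zygmund decomposition.
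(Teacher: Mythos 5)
Your proof is correct, and your handling of the size estimate and the smoothness in the regime $|x-x_0|<\rho(x)$ matches the paper's. The two places where you diverge are worth comparing.

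For the smoothness when $|x-x_0|\geq\rho(x)$, the paper keeps the split into $V^\gamma(x)|\JS_\gamma(x,y)-\JS_\gamma(x_0,y)|$ and $|V^\gamma(x)-V^\gamma(x_0)|\,|\JS_\gamma(x_0,y)|$ and, for the second summand, bounds the potential difference directly via $|V^\gamma(x)-V^\gamma(x_0)|\leq C\rho^{-2\gamma}(x)\bigl(1+|x-x_0|/\rho(x)\bigr)^{2\gamma N_0}\leq C|x-x_0|^\alpha\rho^{-2\gamma-\alpha}(x)\bigl(1+|x-x_0|/\rho(x)\bigr)^{2\gamma N_0}$, absorbing the polynomial factor into the decay $(1+|x-y|/\rho(x))^{-N}$ using $|x-x_0|<|x-y|/2$. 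You instead drop the split entirely and bound $|K_\gamma(x,y)|$ and $|K_\gamma(x_0,y)|$ separately from the size estimate, converting $\rho(x)^\alpha$ and $\rho(x_0)^\alpha$ to $|x-x_0|^\alpha$ via $\rho(x)\leq|x-x_0|$ and the right-hand inequality in~\eqref{rox_vs_roy}, which indeed yields $\rho(x_0)\leq C|x-x_0|$. Your route is slightly cleaner and avoids re-running the first-summand estimate in this regime; both are valid.

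For the weak type $(1,1)$, the paper simply invokes Theorems~1 and~5 of~\cite{bongioanni2019weighted}, whereas you sketch a Calder\'on--Zygmund decomposition starting from "the known $L^p$-boundedness of $T_\gamma$ under $V\in RH_{d/2}$ and \eqref{eq-tamV} (Shen)". That citation is imprecise: Shen proves $L^p$ bounds for $VL^{-1}$ and $V^{1/2}L^{-1/2}$ on specific ranges, not for $V^\gamma L^{-\gamma}$ with arbitrary $\gamma\in(0,1]$. In fact no CZ decomposition is needed: the very size estimate you derived, $|K_\gamma(x,y)|\leq C_N\rho^{-2\gamma}(x)|x-y|^{-(d-2\gamma)}\bigl(1+|x-y|/\rho(x)\bigr)^{-N}$, is absolutely integrable in $y$ uniformly in $x$ (and, converting $\rho(x)$ to $\rho(y)$ via \eqref{rox_vs_roy} and \eqref{roxaroy}, also in $x$ uniformly in $y$), so $T_\gamma$ is already bounded on $L^1$ and the weak $(1,1)$ bound is immediate. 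So your sketch reaches a true conclusion but by an unnecessarily heavy and, as stated, not quite supported route.

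Everything else is in order.
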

\begin{proof}
	Let $0<\gamma\leq 1$ and $V\in RH_{d/2}$ satisfying~\eqref{eq-suavVgamma} for some $\alpha>0$. First, notice that $T_\gamma$ is of weak type $(1,1)$ due to Theorem~1 and Theorem~5 from~\cite{bongioanni2019weighted}.

	Now we are going to show that the kernel $V^{\gamma}(x)\JS_{\gamma}(x,y)$ satisfies the size condition~\eqref{TamPuntual}.  Using that~\eqref{eq-suavVgamma} implies~\eqref{eq-tamV}, and part~\eqref{item-Jgam-a} of Lemma~\ref{lem-Jgamma} we have that for each $N>0$ there exists $C_N$ such that, 
	\begin{equation*}
	\begin{split}
	|V^{\gamma}(x)\JS_{\gamma}(x,y)| & \leq 
	\frac{C_N}{\rho^{2\gamma}(x)|x-y|^{d-2\gamma}} \left(1+\frac{|x-y|}{\rho(x)}\right)^{-N}
	\\ & \leq 
	\frac{C_N}{|x-y|^{d}} \left(1+\frac{|x-y|}{\rho(x)}\right)^{-N+2\gamma}. 
	\end{split}
	\end{equation*}

	To check the point-wise smoothness~\eqref{suav-puntual} we consider $|x-y|>2|x-x'|$ and write
	\begin{equation*}
	\begin{split}
	|V^{\gamma}(x)\JS_{\gamma}(x,y)-&V^{\gamma}(x')\JS_{\gamma}(x',y)| 
	\\ &  \leq V^{\gamma}(x)|\JS_{\gamma}(x,y)-\JS_{\gamma}(x',y)| + |\JS_\gamma(x',y)| |V^\gamma(x)-V^\gamma(x')|.
	\end{split}
	\end{equation*}
	
	For the first term we can use the smoothness of $\JS_\gamma$ given in part~\eqref{item-Jgam-b} of Lemma~\ref{lem-Jgamma} together with~\eqref{eq-tamV} to obtain
	\begin{equation*}
	\begin{split}
	V^{\gamma}(x)|\JS_{\gamma}(x,y)-\JS_{\gamma}(x',y)| & \leq \frac{C_N|x-x'|}{\rho^{2\gamma}(x)|x-y|^{d-2\gamma+1}} 
	\left(1+\frac{|x-y|}{\rho(x)}\right)^{-N}
	\\& \leq C_N \frac{|x-x'|}{|x-y|^{d+1}}
	\left(1+\frac{|x-y|}{\rho(x)}\right)^{-N+2\gamma}. 
	\end{split}
	\end{equation*}

	As for the second term we have two cases. If $|x-x'|\leq\rho(x)$ we can apply~\eqref{eq-suavVgamma}, Lemma~\ref{lem-Jgamma} to obtain
	\begin{equation*}
	\begin{split}
	|\JS_\gamma(x',y)| |V^\gamma(x)-V^\gamma(x')| & \leq C_N\frac{|x-x'|^{\alpha}}{|x'-y|^{d-2\gamma}\rho^{2\gamma+\alpha}(x)}\left(1+\frac{|x'-y|}{\rho(x')}\right)^{-N}
	\\ & \leq 
	C_N \frac{ |x-x'|^{\alpha} }{ |x-y|^{d-2\gamma} \rho^{2\gamma+\alpha}(x) }  \left(1+\frac{|x-y|}{\rho(x)}\right)^{-N}
	\\ & \leq 
	C_N \frac{|x-x'|^\alpha}{|x-y|^{d+\alpha}} 
	\left(1+\frac{|x-y|}{\rho(x)}\right)^{-N+2\gamma+\alpha},
	\end{split}
	\end{equation*}
	since $|x-y|\simeq|x'-y|$ and $\rho(x)\simeq\rho(x')$.
	Otherwise, if $|x-x'|> \rho(x)$ we use~\eqref{eq-tamV} and inequality~\eqref{rox_vs_roy} to get the bound
	\begin{equation}\label{eq-difVgamma}
	\begin{split}
	 |V^\gamma(x)-V^\gamma(x')| & \leq  C\left(\frac{1}{\rho^{2\gamma}(x)}+\frac{1}{\rho^{2\gamma}(x')}\right)
	 \\ & \leq \frac{C}{\rho^{2\gamma}(x)}\left(1+\frac{|x-x'|}{\rho(x)}\right)^{2\gamma N_0}
	  \\ & \leq \frac{C |x-x'|^\alpha}{\rho^{2\gamma+\alpha}(x)} \left(1+\frac{|x-x'|}{\rho(x)}\right)^{2\gamma N_0}.
	\end{split}
	\end{equation} 
	Then, we can proceed as for the other cases to obtain the desired estimate.
\end{proof}

With the aid of last proposition and Theorem~\ref{teo-T1} we are ready  to prove Theorem~\ref{thm:TGamma}.

\begin{proof}[Proof of Theorem~\ref{thm:TGamma}]
	By Proposition~\ref{prop-VLesSCZ} we know that $T_\gamma$ is a  Schr\"odinger-Calder\'on-Zygmund operator of type~$(\infty,\alpha)$. So, it remains to prove that $T_\gamma$ satisfies~\eqref{condT1} in order to apply Theorem~\ref{teo-T1}, obtaining, in this way, the desired result. To check that, observe first that $T_\gamma (1)$ is finite everywhere. Now let $x$, $z\in \RR^d$ such that $|x-z|\leq\rho(x)/2$, and write
	\begin{equation*}
	\begin{split}
	|V^{\gamma}L^{-\gamma}1 (x) - V^{\gamma}L^{-\gamma}1(z)| 
	& =\left| V^{\gamma}(x)\int_{\RR^d}\JS_\gamma (x,y)dy - V^\gamma(z)\int_{\RR^d}\JS_{\gamma}(z,y)dy\right|
	\\ & \leq 
	V^{\gamma}(x)\left| \int_{\RR^d} \left[\JS_{\gamma}(x,y)-\JS_{\gamma}(z,y)\right] dy\right|
	\\ &  \hspace{1cm}+
	|V^\gamma(x)-V^\gamma(z)|\int_{\RR^d} |\JS_\gamma(z,y)|dy  = A + B.
	\end{split}
	\end{equation*}
	
	To estimate $A$ we are going to introduce the kernel $\JC_\gamma(x-y)$ associated to the classical fractional integral $(-\Delta)^{-\gamma}$. Denoting $D_\gamma(x,y)=\JS_{\gamma}(x,y)-\JC_\gamma(x-y)$ we may write
	\begin{equation*}
	\begin{split}
	\int_{\RR^d} \left[\JS_{\gamma}(x,y)-\JS_{\gamma}(z,y)\right]dy & = \int_{B(x,\rho(x))}\left[ D_\gamma(x,y)-D_\gamma(z,y)\right]dy
	\\ & \;\;\;\;+
	\int_{B(x,\rho(x))} \left[\JC_\gamma(x-y)-\JC_\gamma(z-y)\right] dy
	\\ & \;\;\;\;\;\;\;\;
	+ \int_{B(x,\rho(x))^c } \left[\JS_\gamma(x,y)-\JS_\gamma(z,y)\right] dy 
	\\ &  = I + II + III.
	\end{split}
	\end{equation*}
	For $I$ we  decompose the integral in the following way
	\begin{equation*}
	\begin{split}
	|I| & \leq \int_{2|x-z|\leq|x-y|<\rho(x)}|D_\gamma(x,y)-D_\gamma(z,y)|dy
	\\& \;\;\;\;+ \int_{B(x,2|x-z|)}|D_\gamma(x,y)|dy + \int_{B(z,3|x-z|)}|D_\gamma(z,y)|dy = I_1 + I_2 + I_3.
	\end{split}
	\end{equation*}
	Then, for  $I_1$ we can apply part~\eqref{item-Jgam-d} of Lemma~\ref{lem-Jgamma} to obtain
	\begin{equation*}
	\begin{split}
	I_1  & \leq |x-z| \int_{|x-y|<\rho(x)}\left(\frac{|x-y|}{\rho(x)}\right) \frac{dy}{|x-y|^{d-2\gamma+1}}
	\\ & \leq C\frac{|x-z|}{\rho^{1-2\gamma}(x)}.
	\end{split}
	\end{equation*}
	As for $I_2$ we make use of part~\eqref{item-Jgam-c} of Lemma~\ref{lem-Jgamma} obtaining
	\begin{equation*}
	\begin{split}
	I_2  & \leq  \frac{C}{\rho(x)}\int_{|x-y|<2|x-z|} \frac{dy}{|x-y|^{d-2\gamma-1}}
	\\ & \leq C\frac{|x-z|}{\rho^{1-2\gamma}(x)},
	\end{split}
	\end{equation*}
	since $|x-z|\leq\rho(x)$. To estimate $I_3$ we proceed in a similar way.

	Now we are going to take care of $II$. We may write
	\begin{equation*}
	\begin{split}
	|II| & \leq \left|\int_{B(x,\rho(x))}\JC_\gamma(x-y)dy - \int_{B(z,\rho(x))}\JC_\gamma(z-y)dy\right|
	\\ & \;\;\;\;+\int_{B(x,\rho(x))\triangle B(z,\rho(x)) }|\JC_\gamma(z-y)|.
	\end{split}
	\end{equation*}
	Notice that the first term above equals zero. As for the second term, we have $|z-y|\simeq \rho(x)\simeq \rho(z)$ and 
	$$|B(x,\rho(x))\triangle B(z,\rho(x))|\simeq|x-z|\rho^{d-1}(x).$$ So,
	\begin{equation*}
	\begin{split}
	|II| \leq C \rho^{2\gamma-d}(x)|B(x,\rho(x))\triangle B(z,\rho(z))|
	\leq C \frac{|x-z|}{\rho^{1-2\gamma}(x)}.
	\end{split}
	\end{equation*}
	
	Now we turn our attention to $III$. Since $|x-z|<|x-y|/2$ and in view of Lemma~\ref{lem-Jgamma}, part~\eqref{item-Jgam-b}, we may write
	\begin{equation*}
	\begin{split}
	|III|&\leq C_N|x-z|\int_{B(x,\rho(x))^c }\frac{1}{|x-y|^{d-2\gamma+1}} \left(1+\frac{|x-y|}{\rho(x)}\right)^{-N}dy	  
	\leq C \frac{|x-z|}{\rho^{1-2\gamma}(x)},
	\end{split}
	\end{equation*}
	choosing $N>2\gamma-1$.
	
	Applying~\eqref{eq-tamV} and the above estimates we arrive to
	\begin{equation*}
	A\leq C V^\gamma(x) \frac{|x-z|}{\rho^{1-2\gamma}(x)} \leq C \frac{|x-z|}{\rho(x)}.
	\end{equation*}
	
	Finally, to bound $B$ we may use the size estimate for $\JS_\gamma$ given in Lemma~\ref{lem-Jgamma} and~\eqref{eq-suavVgamma} since $|x-z|\leq \rho(x)$,  obtaining
	\begin{equation*}
	\begin{split}
	B & \leq \frac{|x-z|^\alpha}{\rho^{2\gamma+\alpha}(z)} \int_{\RR^d} \frac{C_N}{|z-y|^{d-2\gamma}} \left(1+\frac{|z-y|}{\rho(z)}\right)^{-N}dy
	\\ & \leq C_N \frac{|x-z|^\alpha}{\rho^{2\gamma+\alpha}(z)}
	\left(
	\int_{|z-y|<\rho(z)}  \frac{dy}{|z-y|^{d-2\gamma}} +
	\rho^N(z) \int_{|z-y|\geq \rho(z)} \frac{dy}{|z-y|^{d-2\gamma + N}}
	\right)
	\\ & \leq 
	C \left(\frac{|x-z|}{\rho(z)}\right)^{\alpha},
	\end{split}
	\end{equation*}
	if we choose $N>2\gamma$. Since $\rho(z)\simeq\rho(x)$, inequality~\eqref{condT1} holds. A direct application of Theorem~\ref{teo-T1} completes the proof.

\end{proof}

\begin{rem}\label{rem-T1L}
	Notice that in the above proof  we have obtained for $0< 2\gamma <1$,
	\[
	|L^{-\gamma}1(x)-L^{-\gamma}1(z)|\leq C\left( \frac{|x-z|}{\rho(x)}\right)^{1-2\gamma} |x-z|^{2\gamma}
	\]
	as long as $|x-z|< \rho(x)/2$.
\end{rem}

\subsection{Operators $S_\gamma=V^{\gamma}\nabla L^{-\gamma-1/2}$ for $0<\gamma \leq 1/2$}

Now we analyse this family of operators, under the same conditions on $V$, to obtain similar results for continuity on $BMO^\beta_\rho (w)$.

As before we state a lemma with some technical estimates in order to check that $S_\gamma$ satisfies all the required conditions to apply Theorem~\ref{teo-T1}. Again our operator is the point-wise product of $V^\gamma$ times the integral operator $\nabla L^{-1/2-\gamma}$. So, we give first some estimates for its kernel. Notice that in the next lemma we include the case $\gamma=0$. Therefore, we are also  obtaining estimates for the kernel of the first order Riesz transform $\RS_1=\nabla L^{-1/2}$, that will be useful in the next section.

\begin{lem}~\label{lem-Hnu}
	Let $ V\in RH_{d/2}$ and satisfying $V(x) \leq C/\rho^2(x)$. Then, if $\mathcal{H}_\nu$ denotes the kernel of $\nabla L^{-\nu}$, $1/2\leq \nu \leq 1$, we have the following estimates
	\begin{enumerate}[(a)]
		\item\label{item-Hnu-a}
		For any $N\geq 0$ there exists $C_N$ such that
		\[
		|\mathcal{H}_\nu(x, y)| \leq\frac{C_N}{|x-y|^{d-2\nu +1}}\left( 1+ \frac{|x-y|}{\rho(x)}\right) ^{-N}.
		\]
		\item\label{item-Hnu-b}
		For any $0<\delta<1$  and $N\geq 0$ there exists $C_N$ such that if $|h|\leq  |x-y|/2$.
		\[
		|\mathcal{H}_\nu(x+h,y)- \mathcal{H}_\nu (x,y)|\leq\frac{C_N |h|^\delta}{|x-y|^{d-2\nu +1+\delta}}\left( 1+ \frac{|x-y|}{\rho(x)}\right) ^{-N}.
		\]
		\item\label{item-Hnu-c}
		
		If $\textbf{H}_\nu$ stands for the kernel of $\nabla (-\Delta)^{-\nu}$, for $|x-y|\leq \rho(x)$ we have
		\[
		|\mathcal{H}_\nu (x,y)- \textbf{H}_\nu (x,y)|\leq C \left( \frac{|x-y|}{\rho(x)}\right)\frac{1}{|x-y|^{d-2\nu +1}}.
		\]
		\item\label{item-Hnu-d}
		Denoting by $\mathcal{D}_\nu= \mathcal{H}_\nu- \textbf{H}_\nu$, for any $0<\delta<1$ there exists $C_\delta$ such that
		\[
		|\mathcal{D}_\nu(x+h,y)-\mathcal{D}_\nu(x,y)| \leq C_\delta\left( \frac {|x-y|}{\rho(x)} \right) \frac{|h|^\delta}{|x-y|^{d-2\nu +1+\delta}},
		\]
		
		as long as $|h|\leq  |x-y|/2 \leq \rho(x)$.
	\end{enumerate}
	\begin{proof}
		We follow the same steps as in Lemma~\ref{lem-Jgamma}, starting now with the identity:
		\[
		\nabla L^{-\nu}=-\frac{1}{2\pi} \int_{-\infty} ^\infty (-i\tau)^{-\nu} \nabla(L+i\tau)^{-1}\,\, d\tau.
		\]
		Hence
		\[
		\mathcal{H}_\nu (x,y)= -\frac{1}{2\pi} \int_{-\infty} ^\infty (-i\tau)^{-\nu} \nabla \Gamma(x,y,\tau)\,\, d\tau.
		\]
		Taking absolute value, using the estimate given in the proof of Lemma~\ref{lem:Gamma}, part~\eqref{item:lemgamma1} for $\nabla\Gamma$, changing variables and computing the integral we easily obtain~\eqref{item-Hnu-a}.
		As for~\eqref{item-Hnu-b} we use again the same expression for $\mathcal{H}_\nu $, using now item~\eqref{item:suavgradgammatau} from Lemma~\ref{lem:Gamma}.
		To check~\eqref{item-Hnu-c} we write the corresponding identity 
		\[
		\textbf{H}_\nu (x,y)= -\frac{1}{2\pi} \int_{-\infty} ^\infty (-i\tau)^{-\nu} \nabla \Gamma_0(x,y,\tau)\,\, d\tau.
		\]
		Subtracting both, using part~\eqref{tamagradgammatau-gammatau0} of Lemma~\ref{lem:Gamma} and following the same steps we arrive to the stated estimate.
		
		Finally,~\eqref{item-Hnu-d} follows using again the two identities and item~\eqref{item:suavgraddifdedif} in Lemma~\ref{lem:Gamma}.
	\end{proof}
\end{lem}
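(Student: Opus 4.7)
My plan is to reduce each of the four estimates for $\mathcal{H}_\nu$ to a corresponding pointwise bound on $\nabla_1\Gamma$ or on $\nabla_1(\Gamma-\Gamma_0)$ that was already established in Lemma~\ref{lem:Gamma}, and then recover the kernel of $\nabla L^{-\nu}$ via the functional calculus identity
\[
\nabla L^{-\nu}=-\frac{1}{2\pi}\int_{-\infty}^{\infty}(-i\tau)^{-\nu}\,\nabla(L+i\tau)^{-1}\,d\tau,
\]
which translates into
\[
\mathcal{H}_\nu(x,y)=-\frac{1}{2\pi}\int_{-\infty}^{\infty}(-i\tau)^{-\nu}\,\nabla_1\Gamma(x,y,\tau)\,d\tau,
\]
and similarly $\textbf{H}_\nu(x,y)$ is given by the same integral with $\Gamma_0$ in place of $\Gamma$. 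In every case the strategy is to take absolute values inside the $\tau$-integral, apply the relevant bound from Lemma~\ref{lem:Gamma}, and then evaluate the one-variable integral by the change of variables $s=|\tau|\,|x-y|^2$, which produces the factor $|x-y|^{2\nu-2}$ together with a convergent numerical constant (convergence at $0$ uses $\nu<1$, and convergence at $\infty$ is arranged by taking $N$ sufficiently large in the decay factor $(1+|\tau|^{1/2}|x-y|)^{-N}$).

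For part~\eqref{item-Hnu-a} I would apply the bound for $|\nabla_1\Gamma(x,y,\tau)|$ that was already extracted in the proof of part~\ref{item:lemgamma1}) of Lemma~\ref{lem:Gamma}, namely a pointwise estimate of the form $\tfrac{C_N(1+|x-y|/\rho(x))^{-N}}{(1+|\tau|^{1/2}|x-y|)^N\,|x-y|^{d-1}}$ (absorbing the harmless factors $(1+|x-y|/\rho(x))^{2N_0+2}$ into $N$ via~\eqref{rox_vs_roy}). Part~\eqref{item-Hnu-b} then uses part~\ref{item:suavgradgammatau}) of Lemma~\ref{lem:Gamma}, which gives Hölder $\delta$-smoothness of $\nabla_1\Gamma$ with the same $\tau$- and $\rho$-decay; after the same $\tau$-integration one obtains the factor $|h|^\delta/|x-y|^{d-2\nu+1+\delta}$ as required.

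For parts~\eqref{item-Hnu-c} and~\eqref{item-Hnu-d} I would subtract the two spectral representations and apply, respectively, part~\ref{tamagradgammatau-gammatau0}) and part~\ref{item:suavgraddifdedif}) of Lemma~\ref{lem:Gamma} to the integrand; the extra factor $|x-y|/\rho(x)$ coming from those bounds is pulled out of the $\tau$-integral, and the remaining computation is identical to the one above. The step I expect to require the most care is the convergence of the $\tau$-integral at the endpoints: at $\tau=0$ one must be sure that $|\tau|^{-\nu}$ is integrable, which forces $\nu<1$ so the case $\nu=1$ would have to be handled separately (either by contour deformation or by a direct use of the resolvent at $\tau=0$, since $L^{-1}$ is itself a well-defined operator), and at infinity one must choose $N$ in Lemma~\ref{lem:Gamma} large enough (say $N>2\nu+1$) so that the resulting integral converges uniformly in $|x-y|$. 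Once these are handled, everything reduces to bookkeeping and the four bounds follow in parallel.
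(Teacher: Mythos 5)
Your proposal matches the paper's proof: both use the functional-calculus representation $\mathcal{H}_\nu(x,y)=-\frac{1}{2\pi}\int_{-\infty}^{\infty}(-i\tau)^{-\nu}\nabla_1\Gamma(x,y,\tau)\,d\tau$ (and its $\Gamma_0$ analogue for $\textbf{H}_\nu$), then insert the pointwise bounds from parts \ref{item:lemgamma1}), \ref{item:suavgradgammatau}), \ref{tamagradgammatau-gammatau0}) and \ref{item:suavgraddifdedif}) of Lemma~\ref{lem:Gamma} and carry out the $\tau$-integral by the change of variables $s=|\tau|^{1/2}|x-y|$. Your side remark about the endpoint $\nu=1$ is a fair observation that the paper glosses over: the $\tau$-integral of $|\tau|^{-\nu}$ is not convergent near $0$ when $\nu=1$, but, as you note, $\nabla L^{-1}$ has kernel $\nabla_1\Gamma(x,y,0)$ and the four estimates then follow directly from Lemma~\ref{lem:Gamma} at $\tau=0$, so this endpoint needs no spectral integral at all.
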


Applying the estimates proved above we may obtain the following result.

\begin{prop}\label{prop-SgammaesSCZ}
	Let $V\in RH_{d/2}$ and  $0<\gamma\leq 1/2$. If for some $\alpha>0$, $V$ satisfies~\eqref{eq-suavVgamma} then $S_\gamma$ is a  Schr\"odinger-Calder\'on-Zygmund operator of type~$(\infty,\alpha)$.
\end{prop}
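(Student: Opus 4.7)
The plan is to mirror the proof of Proposition~\ref{prop-VLesSCZ}, exploiting that the kernel of $S_\gamma$ factors as $V^\gamma(x)\,\mathcal{H}_\nu(x,y)$ with $\nu=\gamma+1/2$, so the role played previously by Lemma~\ref{lem-Jgamma} is now taken over by Lemma~\ref{lem-Hnu}. First I would dispatch the weak type $(1,1)$ by invoking Theorem~1 and Theorem~5 in~\cite{bongioanni2019weighted}, exactly as for $T_\gamma$.

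For the size bound~\eqref{TamPuntual}, since \eqref{eq-suavVgamma} entails \eqref{eq-tamV}, the estimate $V^\gamma(x)\leq C\rho^{-2\gamma}(x)$ combined with Lemma~\ref{lem-Hnu}\eqref{item-Hnu-a} at $\nu=\gamma+1/2$ gives, for each $N$,
\begin{equation*}
|V^\gamma(x)\mathcal{H}_\nu(x,y)|\leq \frac{C_N}{\rho^{2\gamma}(x)\,|x-y|^{d-2\gamma}}\left(1+\frac{|x-y|}{\rho(x)}\right)^{-N},
\end{equation*}
and absorbing $\rho^{-2\gamma}(x)$ into $|x-y|^{-2\gamma}$ at the cost of $(1+|x-y|/\rho(x))^{2\gamma}$ produces the desired bound after an initial generous choice of $N$.

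For the H\"older smoothness~\eqref{suav-puntual} with exponent $\alpha$, the natural decomposition is
\begin{equation*}
|V^\gamma(x)\mathcal{H}_\nu(x,y)-V^\gamma(x')\mathcal{H}_\nu(x',y)|\leq V^\gamma(x)|\mathcal{H}_\nu(x,y)-\mathcal{H}_\nu(x',y)|+|\mathcal{H}_\nu(x',y)|\,|V^\gamma(x)-V^\gamma(x')|.
\end{equation*}
For the first summand I would apply Lemma~\ref{lem-Hnu}\eqref{item-Hnu-b} with $\delta=\alpha$ (or any $\delta<1$ if $\alpha=1$, which is still enough for Theorem~\ref{thm:SGamma}) and use $V^\gamma(x)\leq C\rho^{-2\gamma}(x)$ again. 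For the second summand I would reproduce verbatim the two-case argument of Proposition~\ref{prop-VLesSCZ}: when $|x-x'|\leq\rho(x)$ apply~\eqref{eq-suavVgamma} paired with Lemma~\ref{lem-Hnu}\eqref{item-Hnu-a}, noting that $|x-y|\simeq|x'-y|$ and $\rho(x)\simeq\rho(x')$ by Lemma~\ref{lem:mismacritica}; when $|x-x'|>\rho(x)$ combine~\eqref{eq-tamV} with Lemma~\ref{lem:sobrero} to recover the analogue of~\eqref{eq-difVgamma}.

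The only real subtlety is the bookkeeping of the extra powers of $(1+|x-y|/\rho(x))$ that accumulate when trading $\rho(x)^{-2\gamma}$ for a distance factor and, in the second case above, when passing from $\rho(x)$ to $\rho(x')$; as in the $T_\gamma$ proof, these can all be absorbed by choosing $N$ large enough in Lemma~\ref{lem-Hnu}. No new ingredient beyond Lemma~\ref{lem-Hnu} is needed.
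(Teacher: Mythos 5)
Your proposal is correct and follows essentially the same route as the paper: weak type $(1,1)$ from the cited reference, then the kernel factorization $V^\gamma(x)\mathcal{H}_{\gamma+1/2}(x,y)$ with Lemma~\ref{lem-Hnu} playing the role that Lemma~\ref{lem-Jgamma} played for $T_\gamma$, including the same two-term decomposition for the smoothness estimate and the same two-case treatment of $|V^\gamma(x)-V^\gamma(x')|$. One small slip: for $S_\gamma$ the paper invokes Theorem~1 and Theorem~3 of \cite{bongioanni2019weighted} for the weak $(1,1)$ bound (Theorem~5 there is the one used for $T_\gamma$), though this does not affect the substance of the argument; your remark about choosing $\delta<1$ in Lemma~\ref{lem-Hnu}\eqref{item-Hnu-b} is in fact slightly more careful than the paper's write-up, which tacitly uses $\delta=1$.
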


\begin{proof}
	
	First, notice that $S_\gamma$ is of weak type $(1,1)$ as a consequence of Theorem~1 and Theorem~3 from~\cite{bongioanni2019weighted}.
		
	Secondly, our assumption on $V^\gamma$ implies $V(x) \leq C/\rho^2(x)$ and hence we may apply Lemma~\ref{lem-Hnu} for $\nu= \gamma+1/2$. In this way, for the kernel of $S_\gamma$ we have
	\begin{equation}\label{eq-auxS1}
	\begin{split}
	|V^\gamma(x) \mathcal{H}_{\gamma+1/2}(x,y)| & \leq \frac{C_N}{\rho^{2\gamma}(x)|x-y|^{d-2\gamma}} \left( 1+ \frac{|x-y|}{\rho(x)}\right) ^{-N} 
	\\& \leq C_N \frac{1}{|x-y|^d} \left( 1+ \frac{|x-y|}{\rho(x)}\right) ^{2\gamma-N} 
	\end{split}  
	\end{equation}
	Taking $N$ big enough we obtain the right size for the kernel. Regarding the smoothness we proceed as in the case of $T_\gamma$ to obtain for $\tilde{N}=2\gamma(1+N_0) +\alpha$
	
	\begin{equation}
	\begin{split}
	 |V^\gamma(x) &\mathcal{H}_{\gamma+1/2}(x,y) - V^\gamma(z) \mathcal{H}_{\gamma+1/2}(z,y)|
	  \\ & \leq V^\gamma(x)|\mathcal{H}_{\gamma+1/2}(x,y)
	  -\mathcal{H}_{\gamma+1/2}(z,y)| 
	  + |V^\gamma(x) - V^\gamma(z)| |\mathcal{H}_{\gamma+1/2}(z,y)|\\
	&  \leq C_N  \frac{|x-z|}{|x-y|^{d+1}}\left( 1+ \frac{|x-y|}{\rho(x)}\right) ^{2\gamma-N} \!\!\!\!  + C_N  \frac{|x-z|^\alpha}{|x-y|^{d+\alpha}}\left( 1+ \frac{|x-y|}{\rho(x)}\right) ^{\tilde{N}-N},
	\end{split}
	\end{equation}
	where we applied parts~\eqref{item-Hnu-a} and~\eqref{item-Hnu-b} from Lemma~\ref{lem-Hnu} and assumption~\eqref{eq-suavVgamma} on the potential  $V$, which imply~\eqref{eq-tamV} and~\eqref{eq-difVgamma}. Since $|x-z| \leq  |x-y|/2$, taking $N$ big enough, we get smoothness of order $\alpha$.
\end{proof}

Now we are in position to prove Theorem~\ref{thm:SGamma}.

\begin{proof}[Proof of Theorem~\ref{thm:SGamma}]

	In order to apply Theorem~\ref{teo-T1}, it remains to prove the $T1$ condition~\eqref{condT1}. For that we observe that the first inequality in~\eqref{eq-auxS1} shows that the kernel of $S_\gamma$ is integrable and hence $S_\gamma 1$ is finite everywhere. Now, we follow exactly the same steps as in the case of $T_\gamma$, replacing $\JS_\gamma$ by $\mathcal{H}_{\gamma+1/2}$, using this time estimates~\eqref{item-Hnu-a},~\eqref{item-Hnu-c} and~\eqref{item-Hnu-d} with $\nu=\gamma+1/2$ provided in Lemma~\ref{lem-Hnu}. In this way we also get
	\[
	|S_\gamma 1 (x)- S_\gamma 1(z)| \leq C \left( \frac{|x-z|}{\rho(x)}\right) ^\alpha,
	\]
	provided $|x-z|\leq \rho(x)/2$.
	
\end{proof}

\section{Singular Schrödinger Riesz transforms}\label{sec-reg-sing}

In this section we prove the  regularity results for the Schrödinger-Riesz transforms $\mathcal{R}_1$ and $\mathcal{R}_2$ stated in Theorem~\ref{thm:R1nuevo} and Theorem~\ref{teo:R2BMO}. The first case is almost straightforward while the second will require new and more refined estimates.

\subsection{First order singular Schr\"odinger Riesz transforms.} 
Before proving Theorem~\ref{thm:R1nuevo} we make a discussion about the already known results. As it was mentioned in Section~\ref{sec-intro},  boundedness results on $BMO_\rho^\beta (w)$ for $\RS_1$ can be found in Theorem~1 of~\cite{BHS-RieszJMAA} (see also~\cite{MR3180934_stinga}). The precise statement is the following.

\begin{thm}\label{thm:R1viejo}
	Let $V\in RH_q$ for $q> d$ and $\delta=1-d/q$. Then, for any 
	$0\leq\be<\de$, $R_1$ is bounded on $BMO_{\rho}^{\beta}(w)$ as long as $w\in A_{\infty}^{\rho}\cap D_{\mu}^{\rho}$, with $\displaystyle 1\leq \mu < 1+ \frac{\de-\be}{d}$. 
\end{thm}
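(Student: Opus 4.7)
The strategy is the same as for Theorems~\ref{thm:TGamma} and~\ref{thm:SGamma}: I would apply the general $T1$-type statement, Theorem~\ref{teo-T1}, to the operator $\RS_1=\nabla L^{-1/2}$, with regularity index $\sigma=\delta=1-d/q$. Hence the two things to check are (i) that $\RS_1$ is a Schr\"odinger--Calder\'on--Zygmund operator of type $(\infty,\delta)$, and (ii) the $T1$ condition \eqref{condT1} with some $\alpha\geq\delta$.

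For (i), the weak-type $(1,1)$ bound is already in \cite{shen}, where it is shown that under $V\in RH_q$ with $q>d$ the operator $\RS_1$ is in fact a classical Calder\'on--Zygmund operator. The kernel estimates I would derive from the spectral representation
\[
\mathcal{H}_{1/2}(x,y)=-\tfrac{1}{2\pi}\int_{-\infty}^{\infty}(-i\tau)^{-1/2}\,\nabla_1\Gamma(x,y,\tau)\,d\tau,
\]
exactly as in Lemma~\ref{lem-Hnu}, but this time using Shen's sharper bounds for $\nabla_1\Gamma$ and its $x$-increments that are available under $V\in RH_q$ with $q>d$ (see \cite{shen}, Theorems~2.7--2.8). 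Those estimates give an $|h|^\delta$-gain with $\delta=1-d/q$ in place of the unit-order gain used here for $\mathcal{H}_{1/2}$, which is precisely what \eqref{suav-puntual} demands.

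For (ii), I would mimic the computation for $T_\gamma$ in the proof of Theorem~\ref{thm:TGamma}. Writing $\mathcal{D}_{1/2}=\mathcal{H}_{1/2}-\mathbf{H}_{1/2}$, where $\mathbf{H}_{1/2}$ is the kernel of the classical Riesz transform $\nabla(-\Delta)^{-1/2}$, I split
\[
\RS_1 1(x)-\RS_1 1(z)=\!\!\int_{B(x,\rho(x))}\!\!(\mathcal{D}_{1/2}(x,y)-\mathcal{D}_{1/2}(z,y))\,dy + \text{classical term} + \text{tail},
\]
where the classical middle term vanishes after translation because $\mathbf{H}_{1/2}$ integrated over any ball centred at the base-point is zero by symmetry (plus a small $B(x,\rho(x))\triangle B(z,\rho(x))$ correction that is controlled by $|x-z|/\rho(x)$ as in the proof of Theorem~\ref{thm:TGamma}). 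The local $\mathcal{D}_{1/2}$ term is handled using the improved difference estimate $|\mathcal{D}_{1/2}(x+h,y)-\mathcal{D}_{1/2}(x,y)|\les (|x-y|/\rho(x))|h|^\delta |x-y|^{-d-\delta}$, which I would prove from $V\in RH_q$, $q>d$, in analogy with Lemma~\ref{lem-Hnu}\,\eqref{item-Hnu-d}. Finally the tail integrand decays as $|x-y|^{-d}(1+|x-y|/\rho(x))^{-N}$, hence the tail contributes $\les(|x-z|/\rho(x))$ after choosing $N$ large. Assembling these gives $|\RS_1 1(x)-\RS_1 1(z)|\les (|x-z|/\rho(x))^{\delta}$ for $|x-z|\leq\rho(x)/2$, i.e.~\eqref{condT1} with $\alpha=\delta$.

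With both properties in hand, Theorem~\ref{teo-T1} with $\sigma=\min\{\delta,\alpha\}=\delta$ yields boundedness on $BMO^\beta_\rho(w)$ for every $0\leq\beta<\delta$ and every $w\in A_\infty^\rho\cap D_\mu^\rho$ with $1\leq\mu<1+(\delta-\beta)/d$, which is exactly the claim. The main obstacle is the kernel-smoothness step in (i): translating the reverse-H\"older hypothesis $V\in RH_q$ with $q>d$ into a H\"older-$\delta$ estimate for $\nabla_1\Gamma$ with the correct Gaussian-type decay in both $|x-y|/\rho(x)$ and $|\tau|^{1/2}|x-y|$. Once that is pinned down, the remainder of the argument is a routine adaptation of the techniques already exercised in Sections~\ref{sec-Gamma} and~\ref{sec-reg-conV}.
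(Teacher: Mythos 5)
Note first that the paper does not prove Theorem~\ref{thm:R1viejo}: it is quoted verbatim from Theorem~1 of~\cite{BHS-RieszJMAA} (see also~\cite{MR3180934_stinga}) purely to set up the comparison with the new hypotheses of Theorem~\ref{thm:R1nuevo}, and no proof is given in the text. Your proposal to re-derive it through the paper's machinery, Theorem~\ref{teo-T1}, is nevertheless coherent; it is the same scheme the paper runs on $\RS_1$ under the alternative hypothesis $V(x)\leq C/\rho^2(x)$ in the proof of Theorem~\ref{thm:R1nuevo}, and it matches in spirit the $T1$-theorem route of~\cite{MR3180934_stinga}. The sketch is sound provided one establishes the analogues of Lemmas~\ref{lem:Gamma} and~\ref{lem-Hnu} (with the fixed exponent $\delta=1-d/q$ in place of arbitrary $\delta<1$) under $V\in RH_q$, $q>d$, which is indeed what Shen's estimates deliver. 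Three small corrections. The gradient and increment bounds you need from~\cite{shen} are in Section~4 of that paper (around Lemmas~4.5--4.8), not Theorems~2.7--2.8, which concern the size of $\Gamma$. The decay factor $\left(1+|\tau|^{1/2}|x-y|\right)^{-N}$ is polynomial, not Gaussian. Finally, in the tail term of the $T1$ computation the cancellation must come from the H\"older-$\delta$ smoothness~\eqref{suav-puntual} of the kernel of $\RS_1$, not from the size estimate with its $(1+|x-y|/\rho(x))^{-N}$ decay (which by itself gives a constant, not a small quantity); the resulting bound is $(|x-z|/\rho(x))^{\delta}$ rather than $|x-z|/\rho(x)$, which is still exactly what~\eqref{condT1} with $\alpha=\delta$ requires.
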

Let us observe that boundedness on the whole range $0\leq\beta<1$, may be obtained asking $V\in \cap_{q>1}RH_q$ and that Theorem~\ref{thm:R1nuevo} give us the same conclusion but with different assumptions, namely ,$V\in RH_{d/2}$ and $V(x)\leq C/\rho^2(x)$.	

Even these latter conditions, as we showed, are satisfied for any $V\in RH_\infty$, that is no longer true for $V\in \cap_{q>1}RH_q$. In fact, $V(x)= \max\lbrace 1, \log\frac{1}{|x|}\rbrace$ belongs to $RH_q$ for all $q$ but inequality \eqref{eq-tamV} can not hold since $V$ is unbounded near zero. On the other hand, the potential $V(x)= \frac{1}{\left( 1+|x|\right) ^{2-\varepsilon}}$ satisfies the new conditions but it is not in $RH_q$ for $q$ large enough depending on $\varepsilon$. Clearly all $V$ in $RH_\infty$ satisfy both sets of assumptions.

In the sequel, we outline the proof of Theorem~\ref{thm:R1nuevo} following the same techniques developed for the previous cases.

\begin{proof}[Proof of Theorem~\ref{thm:R1nuevo}.]
	Again, the result will be a consequence of Theorem~\ref{teo-T1}. 
	 The weak type~$(1,1)$ follows from Theorem 7.3 in~\cite{BCH3_MR3541813}. To check the other conditions required to be a Schrödinger-Calderón-Zygmund operator of order $(\infty, \delta)$ for any $0<\delta<1$ we make use of Lemma~\ref{lem-Hnu} with  $\nu=1/2$.
	
	 Finally we need to show that $\RS_1$ satisfies the $T1$ condition~\eqref{condT1}. Using the estimates on the difference with the kernel of the classical Riesz transform given in Lemma~\ref{lem-Hnu} for $\nu=1/2$ we can go over  the same steps as in the proof of Theorem~\ref{thm:SGamma}. Actually, we have to handle with a principal value since $\RS_1$ is a singular integral operator. For the details we refer the reader to the proof of Theorem~\ref{teo:R2BMO} below.
\end{proof}

\subsection{Second order singular Schr\"odinger Riesz transforms}
The case $\RS_2$ is more difficult since we do not have the needed estimates at our disposal and they cannot be obtained from Lemma~\ref{lem:Gamma} as before. Consequently we divide the proof in several steps. First we prove that $\RS_2$ satisfies the requirements to be a Schr\"odinger-Calder\'on-Zygmund operator.

\begin{prop}\label{prop-R2SCZO}
	Let $V\in RH_{d/2}$ and suppose that $V$ satisfies~\eqref{eq-suavV} for some $0<\alpha\leq1$. Then $\RS_2$ is a Schr\"odinger Calder\'on Zygmund operator of type~$(\infty,\alpha)$.
\end{prop}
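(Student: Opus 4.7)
The plan is to verify the defining conditions for $\RS_2$ to be a Schr\"odinger--Calder\'on--Zygmund operator of type $(\infty,\alpha)$, whose kernel is $K(x,y)=\nabla_x^2\,\Ga(x,y,0)$. Condition $(I)$, the weak-type $(1,1)$ bound, is already known for $\RS_2$ under $V\in RH_{d/2}$ alone (Shen~\cite{shen}), so this step is by citation.

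For the size estimate \eqref{TamPuntual}, I would imitate the Cauchy-type interior estimate used for $\nabla\Ga$ in part~\ref{item:lemgamma1}) of Lemma~\ref{lem:Gamma}, iterating it once more to pass from $\nabla u$ to $\nabla^2 u$ whenever $Lu=0$ on a ball. Applied to $u(z)=\Ga(z,y,0)$ on $B(x,|x-y|/2)$, combined with \eqref{eq-tamV} (which follows from \eqref{eq-suavV}) and the pointwise bound \eqref{estga} for $\tau=0$, this yields $|K(x,y)|\le C_N|x-y|^{-d}(1+|x-y|/\rho(x))^{-N}$ for every $N$.

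The hard part is the H\"older smoothness \eqref{suav-puntual} of order $\alpha$. I would split into two regimes: when $|x-y|\gtrsim\rho(x)$, the rapid decay in the size bound absorbs the loss $|x-x_0|^\alpha|x-y|^{-\alpha}$, and \eqref{suav-puntual} follows from the triangle inequality applied to the size bound. The essential case is $|x-y|<\rho(x)/4$. Here I would use the perturbation identity \eqref{formulaGaGa0}, differentiate twice in $x$, and write $\nabla_x^2\Ga(x,y,0)$ as the classical piece $\nabla^2\Ga_0(x-y)$, plus a pointwise term $c_d\,V(x)\Ga(x,y,0)$ arising from the distributional Laplacian, plus a principal-value integral
\[
\mathrm{p.v.}\!\int \nabla^2\Ga_0(x-z)\bigl[V(z)\Ga(z,y,0)-V(x)\Ga(x,y,0)\bigr]\,dz,
\]
plus a regular far-field tail. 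These four summands are handled respectively as follows: $\nabla^2\Ga_0$ is a classical Calder\'on--Zygmund kernel with $C^1$ smoothness; the pointwise piece is $C^\alpha$ in $x$ by \eqref{eq-suavV} combined with Lemma~\ref{lem:Gamma}~\ref{item:lemgamma1}); the principal value is a classical second-order Riesz transform of the $\alpha$-H\"older function $V\,\Ga(\cdot,y,0)$, hence $C^\alpha$; and the tail is controlled via the mean value theorem together with \eqref{eq-tamV} and \eqref{estga}.

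The main obstacle is executing this decomposition with the sharp scaling. Specifically, one must show that the $\alpha$-H\"older seminorm of $V\,\Ga(\cdot,y,0)$ on $B(x,\rho(x))$ is bounded by $\rho^{-2-\alpha}(x)|\Ga(x,y,0)|$ up to harmless factors, so that the smoothness estimate for $K$ comes out with the sharp decay $|x-x_0|^\alpha/|x-y|^{d+\alpha}$ demanded by \eqref{suav-puntual}. This is precisely where the stronger hypothesis \eqref{eq-suavV} enters in an essential way, beyond what was needed for the size bound.
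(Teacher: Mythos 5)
Your architecture for the smoothness bound is essentially the paper's: both rest on the singular-integral representation $\nabla^2\Delta^{-1}g = \RC_2 g + c_d g\,\mathbf{I}$ applied to $g = V\,\Gamma(\cdot,y)$ (the paper invokes Gilbarg--Trudinger Lemma~4.4 after localizing with a cutoff $\eta_{x_0}$ on $B(x_0,R/2)$, $R=|x-y|$), and both hinge on showing that the H\"older-$\alpha$ seminorm of this product has the sharp scaling $\rho^{-2-\alpha}$, which is exactly where \eqref{eq-suavV} enters. You have correctly identified that crux. However, there are two genuine gaps in your proposal.

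First, the size estimate does \emph{not} follow by ``iterating the Cauchy-type interior estimate once more.'' The gradient bound in the proof of Lemma~\ref{lem:Gamma}\,\ref{item:lemgamma1}) comes from differentiating the Newtonian-potential representation once, which is harmless because $\nabla\Gamma_0$ is locally integrable. Differentiating once more hits $\nabla^2\Gamma_0\sim|z-\xi|^{-d}$, which is not locally integrable, so you cannot pull the second derivative inside without a Dini/H\"older modulus on the density. Equivalently, $\Delta u = Vu$ bounded (which is all \eqref{eq-tamV} gives you) does not yield pointwise control on $\nabla^2 u$; that implication is false in general. The paper does not claim a pointwise size bound from \eqref{eq-tamV} alone -- it obtains size \emph{and} smoothness simultaneously from the same local decomposition, and the size bound \eqref{eq-tamK21} already uses the H\"older estimate \eqref{eq-NormaLipg1}, hence uses \eqref{eq-suavV}.

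Second, in the regime $|x-y|\gtrsim\rho(x)$ you cannot get the smoothness bound \eqref{suav-puntual} from the triangle inequality on the size bound. That argument gives $|K(x,y)-K(x_0,y)|\le C_N|x-y|^{-d}\bigl(1+|x-y|/\rho(x)\bigr)^{-N}$, and you would need this to be $\le C|x-x_0|^\alpha|x-y|^{-d-\alpha}$ uniformly in $|x-x_0|<|x-y|/2$; as $|x-x_0|\to0$ with $|x-y|$ fixed, the right side vanishes while the left side does not. The paper avoids this entirely: the cutoff $\eta_{x_0}$ is adapted to $R=|x-y|$ at \emph{every} scale, and the factor $\bigl(R/\rho(y)\bigr)^2\bigl(1+R/\rho(y)\bigr)^{-N}$ in the Lipschitz claim \eqref{eq-NormaLipg1} supplies the needed decay when $R\gg\rho(y)$; see \eqref{eq-suavK21}. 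You should therefore run your perturbative decomposition at all scales rather than only for $|x-y|<\rho(x)/4$, tracking a $\rho$-decay factor, instead of splitting into regimes. (Minor point: the weak-type $(1,1)$ for $\RS_2$ under $RH_{d/2}$ and \eqref{eq-suavV} is taken from Theorem~9 of \cite{bongioanni2019weighted}, not directly from Shen.)
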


\begin{proof}
	The weak type~$(1,1)$ for $\RS_2$ is shown in Theorem~9 of~\cite{bongioanni2019weighted}. In order to prove estimates~\eqref{TamPuntual} and~\eqref{suav-puntual} we are going to derive a local expression for the kernel $\KS_2$ of $\RS_2$. Let $x$ and $y\in \RR^d$ and $R=|x-y|$. Let $x_0$ such that $|x_0-x|<R/8$. Consider a function $\eta_{x_0}\in \mathcal{C}_0^\infty(B(x_0,R/2))$ such that $\eta_{x_0}\equiv 1$ in $B(x_0,R/4)$, $|\nabla \eta_{x_0}|\leq C/R$ and $|\nabla^2 \eta_{x_0}|\leq C/R^2$. First we write,
	\begin{equation*}
	\begin{split}
	\Delta (\Gamma(\cdot,y)\eta_{x_0}) & = \eta_{x_0}V\Gamma(\cdot,y) + \Delta \eta_{x_0}\Gamma(\cdot,y) + 2\nabla \eta_{x_0} \cdot \nabla  \Gamma(\cdot,y)\\
	&=g_1+g_2+g_3=g.
	\end{split}
	\end{equation*}
	Then, if $z\in B(x_0,R/4)$,
	\begin{equation*}
\Gamma(z,y)=\eta_{x_0}(z)\Gamma(z,y)=\int \Gamma_0(z-\xi)g(\xi)d\xi.
	\end{equation*}
	Therefore,
	\begin{equation*}
	\begin{split}
	\KS_2(z,y)=\nabla^2( \Gamma(\cdot,y) )(z)  & = \nabla^2 \left(\int 
	\Gamma_0(z-\xi)g(\xi)d\xi\right)
	\\ & 
	= \nabla^2\int \Gamma_0(z-\xi)g_1(\xi)d\xi + \int \nabla^2\Gamma_0(z-\xi)g_2(\xi)d\xi
	\\ & \,\,\,\,\,\,\,+\int \nabla^2\Gamma_0(z-\xi)g_3(\xi)d\xi\\ & = \KS_{2,1}(z,y) + \KS_{2,2}(z,y) + \KS_{2,3}(z,y),
	\end{split}
	\end{equation*}
	due to the fact that the second and third integrals above are absolutely convergent since they are supported on a bounded domain away from the diagonal. For the first term we are going to prove that $g_1$ is a Lipschitz function of order $\alpha$ supported on $B(x_0,R/2)$. In that case, it is known that we have the following representation for $z\in B(x_0,R/4)$ (see Lemma 4.4 in~\cite{gilbarg2001elliptic}), 
	\begin{equation}\label{eq-K21conz}
	\begin{split}
	\KS_{2,1}(z,y)  & =  \int_{B(x_0,R/2)} \nabla^2\Gamma_0(z-\xi)[g_1(\xi)-g_1(z)]d\xi + c_d\mathbf{I}(g_1)(z)
	\\ & = \RC_2(g_1)(z) + c_d\mathbf{I}(g_1)(z),
	\end{split}
	\end{equation}
	where $\RC_2=\nabla^2(-\Delta)^{-1}$ is the classical Riesz transform of second order, $(\mathbf{I}(g_1))_{i,j}=g_1\delta_{i,j}$  and moreover the integral is absolutely convergent. In particular, taking $z=x$, we have that
	\begin{equation}\label{eq-estK21}
	\begin{split}
	|\KS_{2,1}(x,y)| & \leq C \|g_1\|_{\textup{Lip}^{\alpha}}\int_{B(x_0,R/2)}|\xi-x|^{\alpha-d}+c |g_1(x)|
	\\ & \leq C (\|g_1\|_{\textup{Lip}^{\alpha}} R^\alpha + \|g_1\|_{L^{\infty}}). 
	\end{split}
	\end{equation}
	
	We claim that not only $g_1$ is a Lipschitz-$\alpha$ function, but its norm is bounded by
	
	\begin{equation}\label{eq-NormaLipg1}
	\|g_1\|_{\textup{Lip}^{\alpha}}\leq \frac{C_N}{R^{d+\alpha}} \left(\frac{R}{\rho(y)}\right)^2\left(1+\frac{R}{\rho(y)}\right)^{-N}.
	\end{equation}
	Assuming this is true and using that $g_1=0$ on $B(x_0,R/4)^c$,
	\begin{equation*}
	\|g_1\|_{L^{\infty}} \leq C R^\alpha \|g_1\|_{\textup{Lip}^{\alpha}}
	\end{equation*}
	This, together with~\eqref{eq-estK21} and~\eqref{eq-NormaLipg1} give us
	\begin{equation}\label{eq-tamK21}
	|\KS_{2,1}(x,y)|\leq \frac{C_N}{R^d}\left(\frac{R}{\rho(y)}\right)^2\left(1+\frac{R}{\rho(y)}\right)^{-N},
	\end{equation}
	which implies 
	\begin{equation*}
	|\KS_{2,1}(x,y)|\leq \frac{C_N}{|x-y|^d}\left(1+\frac{|x-y|}{\rho(y)}\right)^{-N+2},
	\end{equation*}
	as we wanted to show. 
	
	For the smoothness estimate we take $x'$ such that $|x-x'|<R/8$. Then $x'\in B(x_0,R/4)$ and formula~\eqref{eq-K21conz} applies. Since  $\RC_2$ is a continuous operator on $\textup{Lip}^\alpha$, applying~\eqref{eq-NormaLipg1},
	\begin{equation}\label{eq-suavK21}
	\begin{split}
	|\KS_{2,1}(x,y)-\KS_{2,1}(x',y)| & \leq C |x-x'|^\alpha \|g_1\|_{\textup{Lip}^{\alpha}}
	\\ & \leq C_N \frac{|x-x'|^{\alpha}}{R^{d+\alpha}} \left(\frac{R}{\rho(y)}\right)^2\left(1+\frac{R}{\rho(y)}\right)^{-N}.
	\end{split}
	\end{equation}
	In particular,
	\begin{equation*}
	|\KS_{2,1}(x,y)-\KS_{2,1}(x',y)| \leq C \frac{|x-x'|^{\alpha}}{|x-y|^{d+\alpha}},
	\end{equation*}
	as long as $|x-x'|<|x-y|/8$.
	
		It remains to prove the claim. First, recall that $\text{supp}\,\eta_{x_0}\subset B(x_0,R/2)\subset B(x,5R/8)$. So, if $z\notin B(x,5R/8)$ we have that $V(z)\Gamma(z,y)\eta_{x_0}(z)= 0$. If $z\in B(x,5R/8)$, using that $|z-y|\simeq R$ and~\eqref{eq-tamV},
	\begin{equation*}
	\begin{split}
	|V(z)\Gamma(z,y)\eta_{x_0}(z)| & \leq \frac{C_N}{R^{d}} \left(\frac{R}{\rho(z)}\right)^{2} \left(1+\frac{R}{\rho(z)}\right)^{-N} \\ &=\frac{C_N R^\al}{R^{d+\al}} \left(\frac{R}{\rho(y)}\right)^{2} \left(1+\frac{R}{\rho(y)}\right)^{-N/(N_0+1)+2N_0}.
	\end{split}
	\end{equation*} 
	Then, if $|z-z'|\geq R/4$,
	\begin{equation}
	|V(z)\Gamma(z,y)\eta_{x_0}(z) - V(z')\Gamma(z',y)\eta_{x_0}(z')| \leq C \frac{|z-z'|^{\alpha}}{R^{d+\alpha}}\left(\frac{R}{\rho(y)}\right)^{2} \left(1+\frac{R}{\rho(y)}\right)^{-\tilde{N}},
	\end{equation}
	with $\tilde{N}=N/(N_0+1)+2N_0$.
	
	On the other hand, if $|z-z'|<R/4$, we can write
	\begin{equation*}
	\begin{split}
	|V(z)\Gamma(z,y)\eta_{x_0}(z) - V(z')\Gamma(z',y)\eta_{x_0}(z')| 
	& \leq V(z) \eta_{x_0}(z) |\Gamma(y,z)-\Gamma(y,z')| 
	\\ & +
	V(z) |\Gamma(z',y)| |\eta_{x_0}(z)-\eta_{x_0}(z')| 
	\\ & + |\Gamma(z',y)| \eta_{x_0}(z') |V(z)-V(z')|
	\\ & = I + II + III.
	\end{split}
	\end{equation*}
	
	In $A$ we have that if $z\notin B(x,5R/8)$, then  $I=0$. If $z\in B(x,5R/8)$, then $z'\in B(x,7R/8)$ and $|z-y|\simeq |z'-y|\simeq R$. So, applying~\ref{eq-tamV}, Lemma~\ref{lem:Gamma}, and inequality~\eqref{rox_vs_roy},
	\begin{equation*}
	\begin{split}
	I & \leq \frac{C_N|z-z'|}{R^{d+1}} \left(\frac{R}{\rho(z)}\right)^2 \left(1+ \frac{R}{\rho(y)}\right)^{-{N}} \leq \frac{C_N|z-z'|}{R^{d+1}} \left(\frac{R}{\rho(y)}\right)^2 \left(1+ \frac{R}{\rho(y)}\right)^{-\tilde{N}},
	\end{split}
	\end{equation*}
	with $\tilde{N}=N+2N_0$.
	
	If $z$, $z'\notin B(x,5R/8)$ we have that $II=0$. If instead, $z\in B(x,5R/8)$, it follows that $z'\in B(x,7R/8)$ y $|z-y|\simeq |z'-y|\simeq R$. Then, applying again ~\ref{eq-tamV}, Lemma~\ref{lem:Gamma} and inequality~\eqref{rox_vs_roy},
	\begin{equation*}
	II \leq C_N\frac{|z-z'|}{R^{d+1}} \left(\frac{R}{\rho(z)}\right)^2 \left(1+\frac{R}{\rho(y)}\right)^{-N} \leq \frac{C_N|z-z'|}{R^{d+1}} \left(\frac{R}{\rho(y)}\right)^2 \left(1+ \frac{R}{\rho(y)}\right)^{-\tilde{N}},
	\end{equation*}
	with $\tilde{N}=N+2N_0$.

	To estimate $III$ we may also observe that if $z'\notin B(x,5R/8)$,  then $III=0$. On the other hand, if $z'\in B(x,5R/8)$, then $z\in B(x,7R/8)$ and $|z-y|\simeq |z'-y|\simeq R$. Therefore, if $|z-z'|<\rho(z)$ we may use~\eqref{eq-suavV}and inequality~\eqref{rox_vs_roy} to obtain
	\begin{equation*}
	\begin{split}
	III & \leq C_N \frac{|z-z'|^\alpha}{\rho(z)^{d+\alpha}R^{d-2}}  \left(1+\frac{R}{\rho(y)}\right)^{-N}
	\\& \leq C_N \frac{|z-z'|^\alpha}{R^{d+\alpha}} \left(\frac{R}{\rho(y)}\right)^{2} \left(1+\frac{R}{\rho(y)}\right)^{-\tilde{N}},
	\end{split}
	\end{equation*}
	with $\tilde{N}=-N+(2+\alpha)N_0+\alpha$. If $|z-z'|\geq \rho(z)$, we may write $|V(z)-V(z')|\leq V(z) + V(z')$ obtaining two terms $III_1$ and $III_2$. In each of those terms we can apply~\eqref{eq-tamV} and inequality~\eqref{rox_vs_roy} to obtain
	\begin{equation*}
	\begin{split}
	III_1\ &\leq  \frac{C_N}{\rho^2(z) R^{d-2}} \left(1+\frac{R}{\rho(y)}\right)^{-N}
	\\ &\leq  \frac{C_N}{\rho^2(z) R^{d-2}} \left(1+\frac{R}{\rho(y)}\right)^{-N} \left(\frac{|z-z'|}{\rho(z)}\right)^\alpha
		\\ &\leq  \frac{C_N|z-z'|^\alpha}{\rho^{2+\alpha}(y) R^{d-2}} \left(1+\frac{R}{\rho(y)}\right)^{-N+(2+\alpha)N_0}
	\\& \leq C_N \frac{|z-z'|^\alpha}{R^{d+\alpha}} \left(\frac{R}{\rho(y)}\right)^{2} \left(1+\frac{R}{\rho(y)}\right)^{-N+(2+\alpha)N_0+\alpha},
	\end{split}
	\end{equation*}
	 A similar estimate can be obtained for $III_2$. The estimates for $I$, $II$ and $III$ together give us the claimed bound for $\|V \Gamma(\cdot,y) \eta_{x_0} \|_{\text{Lip}^{\al}}$.

	Now, we turn our attention to $\KS_{2,2}$. If $z\in B(x_0,R/4)$,
	\begin{equation*}
	\KS_{2,2}(z,y)=\int \nabla_1^2\Gamma_0(z-\xi)\Delta\eta_{x_0} \Gamma (\xi,y)d\xi.
	\end{equation*}
	In particular, if $z=x$,
	\begin{equation*}
	|\KS_{2,2}(x,y)| \leq \frac{C_N}{R^2}\int_{R/4<|\xi-x_0|<R/2} \frac{1}{|x-\xi|^d|y-\xi|^{d-2}}\left(1+\frac{|y-\xi|}{\rho(y)}\right)^{-N}d\xi.
	\end{equation*}
	In this situation it can be shown that $|x-\xi|\simeq|y-\xi|\simeq R$. Then,
	\begin{equation*}
	|\KS_{2,2}(x,y)| \leq \frac{C_N}{|x-y|^d} \left(1+\frac{|x-y|}{\rho(x)}\right)^{-N}.
	\end{equation*}
	
	In a similar way, if $|x-x'|\leq |x-y|/8$,
	\begin{equation*}
	\begin{split}
	|\KS_{2,2}(x,y)-\KS_{2,2}(x',y)| & \leq \int \left|\nabla_1^2\Gamma_0(x-\xi) -\nabla_1^2\Gamma_0(x'-\xi)\right| 
	|\Delta\eta_{x_0} |\Gamma (\xi,y)d\xi
	\\ & \leq \frac{C|x-x'|}{R^2}\int_{R/4<|\xi-x_0|<R/2} \frac{1}{|x-\xi|^{d+1}|y-\xi|^{d-2}}d\xi
	\\ & \leq \frac{C|x-x'|}{|x-y|^{d+1}},
	\end{split}
	\end{equation*}
	since again $|x-\xi|\simeq|y-\xi|\simeq R$.
	
	Finally we need to obtain the corresponding estimates for $\KS_{2,3}$. If $z\in B(x_0,R/4)$
	\begin{equation*}
	\KS_{2,3}(z,y)=\int 2\nabla_1^2\Gamma_0(z-\xi)\nabla\eta_{x_0} \cdot \nabla_1\Gamma (\xi,y)d\xi.
	\end{equation*}
	In particular, if $z=x$,
	\begin{equation*}
	|\KS_{2,3}(x,y)| \leq \frac{C_N}{R}\int_{R/4<|\xi-x_0|<R/2} \frac{1}{|x-\xi|^d|y-\xi|^{d-1}}\left(1+\frac{|y-\xi|}{\rho(y)}\right)^{-N}d\xi.
	\end{equation*}
	As before, it can be shown that $|x-\xi|\simeq|y-\xi|\simeq R$. Then,
	\begin{equation*}
	|\KS_{2,3}(x,y)| \leq \frac{C_N}{|x-y|^d} \left(1+\frac{|x-y|}{\rho(x)}\right)^{-N}.
	\end{equation*}
	
	In a similar way, if $|x-x'|\leq |x-y|/8$,
	\begin{equation*}
	\begin{split}
	|\KS_{2,3}(x,y)-\KS_{2,3}(x',y)| & \leq \int \left|\nabla_1^2\Gamma_0(x-\xi) -\nabla_1^2\Gamma_0(x'-\xi)\right| 
	|\nabla\eta_{x_0} ||\nabla\Gamma (\xi,y)d\xi|
	\\ & \leq \frac{C|x-x'|}{|x-y|^{d+1}},
	\end{split}
	\end{equation*}
	since again $|x-\xi|\simeq|y-\xi|\simeq R$.

\end{proof}

Next we obtain some estimates involving the difference between the kernels of $\RS_2$ and the corresponding to the classical second order Riesz transform $\RC_2$, denoted by $\KC_2$.
\begin{lem}\label{lem-comparacionR2}
	Let $V\in RH_{d/2}$ and such that inequality  \eqref{eq-suavVgamma} holds for some $\alpha \leq 1$. Then, for any $y$, $x \in\RR^d$  such that $|y-x|\leq \rho(x)$ there exists a constant $C$ such that
	\begin{enumerate}[(a)]
		\item\label{item-DifR2-a}
		\begin{equation*}
		|\KS_2(x,y)-\KC_{2}(x,y)|\leq
		C  \frac{C}{|x-y|^d}\left(\frac{|x-y|}{\rho(x)}\right)^{2}.
		\end{equation*}
		\item\label{item-DifR2-b} For any $x'$ such that $|x-x'|< \frac{1}{16}|x-y|$, we have
		\[
		|\KS_2(x,y)-\KC_2(x,y)-\left[\KS_2(x',y)-\KC_2(x',y)\right]|
		\leq C \frac{|x-x'|^\alpha}{|x-y|^{d+\alpha}} \left( \frac{|x-y|}{\rho(x)} \right) ^2.
		\]
	\end{enumerate}
\end{lem}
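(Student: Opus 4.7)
The plan is to build a local representation of $\mathcal{K}_2$ exactly as in the proof of Proposition~\ref{prop-R2SCZO} and to compare it, piece by piece, with an analogous representation for $\mathbf{K}_2$. Fix $x,y$ with $R=|x-y|\le\rho(x)$, pick $x_0$ with $|x-x_0|<R/8$, and use the cutoff $\eta_{x_0}\in C_c^\infty(B(x_0,R/2))$ from that proof. Repeating the argument with $\Gamma_0$ in place of $\Gamma$ one obtains
\begin{equation*}
\mathbf{K}_2(x,y)=\mathbf{K}_{2,2}^{\mathrm{cl}}(x,y)+\mathbf{K}_{2,3}^{\mathrm{cl}}(x,y),
\end{equation*}
where $\mathbf{K}_{2,j}^{\mathrm{cl}}$ is $\mathcal{K}_{2,j}$ with $\Gamma$ replaced by $\Gamma_0$; no analogue of $\mathcal{K}_{2,1}$ appears, both because the potential is absent and because the Dirac mass from $-\Delta\Gamma_0=\delta_y$ is killed by $\eta_{x_0}$, since $|y-x_0|\ge 7R/8$. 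Subtracting,
\begin{equation*}
\mathcal{K}_2-\mathbf{K}_2 \;=\; \mathcal{K}_{2,1} \;+\; \bigl[\mathcal{K}_{2,2}-\mathbf{K}_{2,2}^{\mathrm{cl}}\bigr] \;+\; \bigl[\mathcal{K}_{2,3}-\mathbf{K}_{2,3}^{\mathrm{cl}}\bigr].
\end{equation*}

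For the size bound (a), the first piece $\mathcal{K}_{2,1}=\mathbf{R}_2(g_1)(x)+c_d\,g_1(x)$ with $g_1=\eta_{x_0}V\Gamma(\cdot,y)$ is already of the required order: the bounds $\|g_1\|_{L^\infty}\lesssim R^{-d}(R/\rho)^2$ and $R^\alpha\|g_1\|_{\mathrm{Lip}^\alpha}\lesssim R^{-d}(R/\rho)^2$, derived inside the proof of Proposition~\ref{prop-R2SCZO} using $V\le C/\rho^2$ on $B(x_0,R/2)$, together with the continuity of $\mathbf{R}_2$ on H\"older spaces give $|\mathcal{K}_{2,1}(x,y)|\lesssim R^{-d}(R/\rho)^2$. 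To handle the two remaining differences, which take the form
\begin{equation*}
\int \nabla_1^2\Gamma_0(x-\xi)\,\Delta\eta_{x_0}(\xi)\,(\Gamma-\Gamma_0)(\xi,y)\,d\xi
\end{equation*}
and the corresponding expression involving $\nabla\eta_{x_0}$ and $\nabla_1(\Gamma-\Gamma_0)$, I would insert the resolvent identity $(\Gamma-\Gamma_0)(\xi,y)=-\int\Gamma_0(\xi-w)V(w)\Gamma(w,y)\,dw$ (cf.~\eqref{formulaGaGa0}) so that an explicit factor $V(w)\le C/\rho^2$ appears under the integral. Splitting the resulting integration in $w$ according to $|w-y|\le R/8$, $R/8<|w-y|\le\rho(y)$, and $|w-y|>\rho(y)$ and applying in each region the pointwise bounds \eqref{estga} for $\Gamma$ together with the size of $V$, one verifies that every piece is of order $(R/\rho)^2R^{-d}$.

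For the smoothness bound (b), the condition $|x-x'|<R/16$ ensures that both $x$ and $x'$ lie in $B(x_0,R/4)$, so the local representation applies to both. The $\mathcal{K}_{2,1}$ contribution is controlled via the continuity of $\mathbf{R}_2$ on $\mathrm{Lip}^\alpha$ applied to the Lipschitz bound on $g_1$ already produced in Proposition~\ref{prop-R2SCZO}. For the remaining differences, only the factor $\nabla_1^2\Gamma_0(x-\xi)$ depends on $x$, so I would use the H\"older bound $|\nabla_1^2\Gamma_0(x-\xi)-\nabla_1^2\Gamma_0(x'-\xi)|\lesssim|x-x'|^\alpha|x-\xi|^{-d-\alpha}$ and repeat the region-by-region splitting of the size estimate.

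The main obstacle is extracting the full quadratic factor $(R/\rho)^2$ from $\mathcal{K}_{2,j}-\mathbf{K}_{2,j}^{\mathrm{cl}}$ for $j=2,3$: a direct use of parts~\ref{item:estgamenosga0}) and~\ref{tamagradgammatau-gammatau0}) of Lemma~\ref{lem:Gamma} yields only one power of $R/\rho$, and the remark after that lemma confirms that those estimates are essentially sharp when $d=3$. The iteration of the resolvent identity just described recovers the missing power through the explicit factor $V\le C/\rho^2$, but the resulting singular double integral must be analysed region by region, which is the most delicate computation in the proof.
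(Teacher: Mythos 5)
Your setup is on track for the $\mathcal{K}_{2,1}$ piece, which the paper also identifies as exactly the $\eta_{x_0}$-localized part of the resolvent formula, handled by the bounds already proved in Proposition~\ref{prop-R2SCZO}. The gap is in how you propose to treat the two remaining differences $\mathcal{K}_{2,j}-\mathbf{K}_{2,j}^{\mathrm{cl}}$, $j=2,3$.

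Inserting the resolvent identity into each of these \emph{separately} and estimating by absolute values region by region does \emph{not} produce the factor $(R/\rho)^2$ when $d=3$ or $d=4$. After the insertion, $\mathcal{K}_{2,2}-\mathbf{K}_{2,2}^{\mathrm{cl}}=-\int V(w)\Gamma(w,y)K_2(w)\,dw$ with
$K_2(w)=\int\nabla_1^2\Gamma_0(x-\xi)\,\Delta\eta_{x_0}(\xi)\,\Gamma_0(\xi,w)\,d\xi$;
for $|w-x_0|>2R$ one only gets $|K_2(w)|\lesssim R^{-2}|w-x_0|^{-(d-2)}$, so the piece of the $w$-integral over $R\lesssim|w-y|\lesssim\rho$ contributes
$R^{-2}\rho^{-2}\int_{2R}^{\rho}t^{3-d}\,dt$,
which for $d=3$ is $\sim R^{-2}\rho^{-1}$, larger than the target $R^{-1}\rho^{-2}$ by a factor $\rho/R$, and for $d=4$ carries an extra $\log(\rho/R)$. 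The same phenomenon shows up in the tail $|w-y|>\rho$. This is exactly the obstruction in the remark after Lemma~\ref{lem:Gamma}: the individual decay $|w|^{-(d-2)}$ is too slow in low dimension, and the loss is real, not an artefact of a crude estimate of $K_2$.

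What rescues the argument is a cancellation between $K_2$ and $K_3$ that your plan never invokes. For $w$ outside $\operatorname{supp}\eta_{x_0}$ one has $\Delta\eta_{x_0}\,\Gamma_0(\cdot,w)+2\nabla\eta_{x_0}\cdot\nabla\Gamma_0(\cdot,w)=\Delta\bigl(\eta_{x_0}\Gamma_0(\cdot,w)\bigr)$, and applying $\nabla_1^2(-\Delta)^{-1}$ to a Laplacian collapses the sum $K_2(w)+K_3(w)$ to $\pm\nabla^2_1\Gamma_0(x,w)\,(1-\eta_{x_0}(w))$, which decays like $|x-w|^{-d}$ — two orders better than either $K_j$ alone. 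After this cancellation the double integral reduces to the single integral $-\int\nabla_1^2\Gamma_0(x,\xi)V(\xi)\Gamma(\xi,y)(1-\eta_{x_0}(\xi))\,d\xi$, which is precisely what the paper calls $D_2$: it starts directly from $\nabla_1^2(\Gamma-\Gamma_0)$ and splits the resolvent integral by $\eta_{x_0}+(1-\eta_{x_0})$, never producing the two differences as separate objects. You should either adopt that decomposition outright, or, if you keep yours, combine the two differences before taking absolute values so that the $\nabla^2_1\Gamma_0(x,\cdot)$ with full $|x-\cdot|^{-d}$ decay reappears. The region-by-region plan you describe, applied to each piece individually, cannot close the size estimate (a), and a fortiori not (b), for $d=3,4$.
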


\begin{proof}
	Let $\Gamma$ and $\Gamma_0$ be the fundamental solutions of $L$ and $-\Delta$ respectively. We remind that 	
	\begin{equation}
	\Gamma(x,y)- \Gamma_0(x,y) = -\int_{\RR^d}\Gamma_0(x,\xi)V(\xi)\Gamma(y,\xi) d\xi.
	\end{equation} 
	From this identity it follows
	\begin{equation}
	\KS_2(x,y)- \KC_{2}(x,y) = \nabla_1^2\Gamma(x,y)- \nabla_1^2\Gamma_0(x,y) = -\nabla_1^2\int_{\RR^d}\Gamma_0(x,\xi)V(\xi)\Gamma(y,\xi) d\xi.
	\end{equation} 
	As in Lemma~\ref{lem-Hnu}, let $ R=|x-y|$ and $x_0$ such that $|x-x_0|< R/8$ and $\eta_{x_0}$ a $C_0^\infty$ function supported on $B(x_0,R/2)$ and equals one on $B(x_0,R/4)$. Set $D(x,y)= \KS_2(x,y)-\KC_{2}(x,y)$.
	
	Then we write
	\begin{equation}
	\begin{split}
D(x,y)&= -\nabla^2_1\int_{\RR^d}\Gamma_0(x,\xi)V \Gamma(y,\xi) \eta_{x_0}(\xi)d\xi \\
& \hspace{50pt} - \int_{\RR^d} \nabla^2_1 \Gamma_0(x,\xi)V \Gamma(y,\xi) (1-\eta_{x_0})(z)d\xi
\\&=D_1(x,y)+D_2(x,y).
	\end{split}
	\end{equation}
	So, it is enough to prove size and smoothness for each term. In fact, as we shall see, the second term has a better regularity since inequality~\eqref{item-DifR2-b} holds with $\alpha=1$. 
	
	For $D_1$ we notice that it is exactly the same as $\mathcal{K}_{21}$ appearing in the proof of Proposition~\ref{prop-R2SCZO} and hence inequalities~\eqref{eq-tamK21} and~\eqref{eq-suavK21} give the right hand side of~\eqref{item-DifR2-a} and~\eqref{item-DifR2-b} respectively.

	Now for $ D_2$ we observe that $1-\eta_{x_0}$ is supported on $B^c(x_0, R/4)$ and that it is possible to take derivatives inside the integral because, as we shall see later, they converge absolutely.
	We split the integral over the subsets $J_1=B(y,R/2)$ 
	and $J_2= B^c(x_0, R/4)\cup J^c_1$.
	
	First, on $J_1$ we are away from the singularity of $\Gamma_0$, more precisely $|x-\xi| \geq |x-y|-|y-\xi| \geq 3R/4$, so we may use the size estimates for $\Gamma$ and $\Gamma_0$ (see Lemma~\ref{lem:Gamma}) together with~\eqref{eq-tamV} to obtain
	\begin{equation}
	\begin{split}
	\int_{J_1}\left|\nabla_1^2\Gamma_0(x,\xi)\right|V(\xi)\Gamma(y,\xi)d\xi
	& \leq
	\frac{C}{R^d} \int_{B(y,R/4)} \frac{V(\xi)}{|y-\xi|^{d-2}}d\xi
	\\ & \leq
	\frac{C}{R^{d}} \left(\frac{R}{\rho(x)}\right)^2, 
	\end{split}
	\end{equation}
	where we also use $\rho(\xi)\simeq \rho(x)$	since $|\xi-x|\leq 2\rho(x)$.
	
	Similarly, for the smoothness of this term, taking $|x-x'|<R/16$,  we have $|\xi-x|\geq R/2$.                                                                                                                                                                                           Therefore  $|x-x'|<\frac{1}{8}| \xi-x|$ and we may apply the smoothness property of $\nabla_1^2\Gamma_0$ away from the diagonal to get	
	\begin{equation}
	\begin{split}
	\int_{J_1}\left|\nabla_1^2\right.&\Gamma_0(x,\xi)- \nabla_1^2 \Gamma_0(x',\xi)| V(\xi)\Gamma(y,\xi)d\xi
	\\ & \leq
	\frac{C |x-x'|}{R^{d+1}} \int_{B(y,R/4)} \frac{V(\xi)}{|y-\xi|^{d-2}}d\xi
	\\ & \leq
	C \frac{|x-x'|}{R^{d+1}} \left(\frac{R}{\rho(x)}\right)^2, 
	\end{split}
	\end{equation}

	To estimate the integral over $J_2$ we write $ J_2=J_{21}\cup J_{22}$ where
	$J_{21}=\{\xi \in \RR^d: R/4\leq|y-\xi|<2R \,\land\, |x_0-\xi|\geq R/4\}$ and  $J_{22}=\{\xi \in \RR^d:|y-\xi|\geq 2R\}.$
	
	On $J_{21}$ we are away from the singularities of $\Gamma$ and $\Gamma_0$, then
	\begin{equation}
	\begin{split}
	\left|\int_{J_{21}}\nabla_1^2\Gamma_0(x,\xi)V(\xi)\Gamma(y,\xi)d\xi\right|
	& \leq
	\frac{C}{R^d} \left(\frac{R}{\rho(x_0)}\right)^2,
	\end{split}
	\end{equation}
	where we used again the bound on $V$ and that $\rho(\xi)\simeq \rho(x)$.
	
	Regarding the smoothness, since $|\xi-x|\geq |\xi-x_0|- |x_0-x|\geq R/8 $ and we are assuming $|x-x'|<R/16$, we can apply again the regularity of $\nabla^2 \Gamma_0$ to also obtain in this case the right hand side of~\eqref{item-DifR2-b}.

	When integrating on $J_{22}$, it is easy to see that $|x-\xi|\geq 3|y-\xi|/8 \geq R/4$. Therefore, using this time the extra decay of $\Gamma$, 
	\begin{equation}
	\begin{split}
	\left|\int_{J_{22}}\nabla_1^2\Gamma_0(x,\xi)V(\xi)\Gamma(y,\xi)d\xi\right|
	& \leq C
	\int_{J_{22}} \frac{V(\xi)}{ |y-\xi|^{2d-2}} \left( 1+\frac{|\xi - y|}{\rho(y)}\right) ^{-N} d\xi  
	\\ & \leq
	\frac{C}{R^d} \left(\frac{R}{\rho(x_0)}\right)^2,
	\end{split}
	\end{equation}
	where we made use of the inequality
	\[
	\frac{1}{\rho^2(\xi)}\left( 1+\frac{|\xi - y|}{\rho(y)}\right) ^{-N} \leq \frac{C}{\rho^2(x)}  
	\]
	that holds for any $N>2N_0$.
	
%	To check that this is true, first observe that if $|\xi-y|< \rho(y)$ we know that $\rho(\xi)\simeq \rho(y) \simeq \rho(x)$ and the inequality follows since the second factor is bounded. On the other hand, when $|\xi-y|\geq \rho(y)$, we use the left hand side of \eqref{roxaroy} to bound $\frac{1}{\rho(\xi)}$ by $\frac{c}{\rho(y)}\left( 1+\frac{|\xi - y|}{\rho(y)}\right) ^{N_0} $. Thus, choosing $N=2N_0$, we are done since $\rho(y) \simeq \rho(x)$ by our assumption on $x$ and $y$.
	
	Finally, for the smoothness of this term, we check again that we are away from the diagonal. In fact, if $|x-x'|< R/16$, since $|x-\xi| \geq |\xi-y|-|y-x| \geq 2R-R=R$ we get that $|x-x'|< |\xi-x|/16$. Now, proceeding as above, we arrive to the desired inequality.
\end{proof}

Now we give a proof of Theorem~\ref{teo:R2BMO}. It follows the ideas of the poofs given for Theorem~\ref{thm:TGamma} and Theorem~\ref{thm:SGamma}, although some changes must be done since $\RS_2$ is a singular integral operator.

\begin{proof}[Proof of Theorem~\ref{teo:R2BMO}]
	We use Proposition~\ref{prop-R2SCZO} together with Theorem~\ref{teo-T1}. Therefore, it is enough to show that $\RS_2(1)$ satisfies condition~\eqref{condT1}. First, notice that $\RS_2 1$ is finite everywhere as a consequence of Lemma~\ref{lem-comparacionR2} part~\eqref{item-DifR2-a} and the size estimate for $\KS_2$ given in Proposition~\ref{prop-R2SCZO}.
	
	Also, we make the following observation: for $x$ and $z\in \RR^d$
	\begin{equation}
	\begin{split}
	\RS_2(1)(x) &
	=\lim_{\varepsilon\rightarrow0} \int_{\RR^d\setminus B(x,\varepsilon)}\KS_2(x,y)dy
	=\lim_{\varepsilon\rightarrow0} \int_{\RR^d\setminus (B(x,\varepsilon)\cup B(z,\varepsilon))}\KS_2(x,y)dy,
	\end{split}
	\end{equation}
	since the integral over $B(z,\varepsilon)$ goes to zero with $\varepsilon$.

	Now, $x$, $z\in\RR^d$ such that $|x-z|\leq \rho(x)/2$. Setting $A_\varepsilon=B(x,\varepsilon)\cup B(z,\varepsilon)$, $\KC_2$ the kernel of the classical Riesz transform and $D(x,y)=\KS_2(x,y)-\KC_2(x-y)$ we may write
	
	\begin{equation*}
	\begin{split}
	\RS_2(1)(x) - \RS_2(1)(z) & = 
	\lim_{\varepsilon\rightarrow 0^+} \int_{\RR^d\setminus A_\varepsilon} \left[\KS_2(x,y) dy - 
	\KS_2(z,y)\right] dy
	\\ & = \lim_{\varepsilon\rightarrow 0^+} \int_{B(x,\rho(x_))\setminus A_\varepsilon} 
	 \left[D(x,y)-D(z,y)\right]dy 
	\\ & \hspace{30pt} + \lim_{\varepsilon\rightarrow 0^+} \int_{B(x,\rho(x))\setminus A_\varepsilon} 
	\left[\KC_2(x-y)-\KC_2(z-y)\right]dy
	\\ & \hspace{60pt} +  \int_{B(x,\rho(x))^c} 
	\left[\KS_2(x,y)-\KS_2(z,y)\right]dy\\
	& = I + II + III.
	\end{split}
	\end{equation*}
	For $I$ we may first write
	\begin{equation}
	\begin{split}
	|I| & \leq \int_{B(x,\rho(x))\setminus B(x,16|x-z|)} |D(x,y)-D(z,y)|dy
	\\ & \hspace{30pt} + \int_{B(x,16|x-z|)}|D(x,y)|dy \\
	& \hspace{60pt} + \int_{B(z,17|x-z|)}|D(z,y)|dy\\
	& = I_1 + I_2 + I_3.
	\end{split}
	\end{equation}

	To deal with $I_1$ we can apply part~\eqref{item-DifR2-b} of Lemma~\ref{lem-comparacionR2} to obtain
	\begin{equation*}
	\begin{split}
	I_1  & \leq \frac{C|x-z|^\alpha}{\rho^2(x)} \int_{|x-y|<\rho(x)} \frac{dy}{|x-y|^{d-2+\alpha}}
	\\ & \leq C\left(\frac{|x-z|}{\rho(x)}\right)^\alpha.
	\end{split}
	\end{equation*}
	
	As for $I_2$, we can apply part~\eqref{item-DifR2-a} of Lemma~\ref{lem-comparacionR2} to obtain
	\begin{equation*}
	\begin{split}
	I_2  & \leq  \frac{C}{\rho^2(x)}\int_{|x-y|<16|x-z|} \frac{dy}{|x-y|^{d-2}}
	\\ & \leq  C\left(\frac{|x-z|}{\rho(x)}\right)^2,
	\end{split}
	\end{equation*}
	since $|x-z|\leq\rho(x)$. We handle $I_3$ in the same way.

	Now we are going to estimate $II$. We may write
	\begin{equation*}
	\begin{split}
	|II| & \leq \left|\lim_{\varepsilon\rightarrow 0^+}\int_{B(x,\rho(x))\setminus B(x,\varepsilon)}\KC_2(x-y)dy\right|\\
	&\hspace{30pt} + \left|\lim_{\varepsilon\rightarrow 0^+}\int_{B(z,\rho(x))\setminus B(z,\varepsilon)}\KC_2(z-y)dy\right|
	\\ & \hspace{60pt} +\int_{B(x,\rho(x))\triangle B(z,\rho(x)) }|\KC_2(z-y)|\\
	& =II_1+II_2+II_3.
	\end{split}
	\end{equation*}
	Te terms $II_1$ and $II_2$ equal zero. As for $II_3$ we have $|z-y|\simeq \rho(x)$ and \[|B(x,\rho(x))\triangle B(z,\rho(x))|=C|x-z|\rho^{d-1}(x).\] Therefore,
	\begin{equation*}
	\begin{split}
	II_3 \leq C \rho^{-d}(x)|B(x,\rho(x))\triangle B(z,\rho(x))|
	\leq C \frac{|x-z|}{\rho(x)}.
	\end{split}
	\end{equation*}
	
	Finally, we turn our attention to $III$. Applying the smoothness condition for $\KS_2$ we may write
	\begin{equation*}
	\begin{split}
	|III|&\leq C|x-z|^\alpha\int_{B(x,\rho(x))^c }\frac{1}{|x-y|^{d+\alpha}} dy	  
	\leq C \left(\frac{|x-z|}{\rho(x)}\right)^\alpha.
	\end{split}
	\end{equation*}
	
\end{proof}

\section{A digression}\label{sec-digression}

All along this section we are going to assume that $w=1$. Our intention is to discuss to what extent the condition on $V$ required for boundedness of $T_{\gamma}$ on $BMO_\rho^\beta$ given in Theorem~\ref{thm:TGamma} is necessary.

We are able to show that, if $0<2\gamma<1$ and $\beta$ is small enough, a little weaker condition must hold. More precisely, if we assume $T_\gamma$ bounded on $BMO_\rho^\beta$, the potential must satisfy
\begin{equation}\label{eq-suavVgamma-2}
|V^\gamma(x)-V^\gamma(y)|\leq C\frac{|x-y|^{\beta}}{\rho^{2\gamma +\beta}(x)}, \ \ \ \text{for $|x-y|<\rho(x)$.}
\end{equation}
Remind that, according to Theorem~\ref{thm:TGamma}, to obtain boundedness of $T_{\gamma}$ on $BMO_\rho^\beta$  we need the above inequality to hold with some $\alpha> \beta$.
Nevertheless, for such small values of  $\beta$, and by means of a different technique, we  can get a more refined result, showing that the above condition is also sufficient.

Our first observation is that the operator $T_\gamma$ is the composition of a fractional integral with multiplication by a fixed function. Then, in order to prove that \eqref{eq-suavVgamma-2} is sufficient, we shall analyse the behaviour of $\L^{-\gamma}$ on $BMO_\rho ^\beta $. In~\cite{BHSnegativepowers} as well as in~\cite{MR3180934_stinga}, it is proved that under the assumption $V\in RH_q$ for some $q>d/2$, it maps $BMO_\rho^\beta$ into $BMO_\rho^{\beta +2\gamma}$ when $\beta+2\gamma < \delta_0$, where $\delta_0= \min \left\lbrace 1,2-d/q\right\rbrace $. However we will check that with the additional assumption $V(x) \leq C/\rho^2(x)$, the above boundedness holds for all $\beta$ such that $\beta+2\gamma <1$, that is like the parameter $q$ were $\infty$.

To do that we will apply Theorem 1.1 in~\cite{MR3180934_stinga}. Accordingly, we must check that it is bounded on some $L^p$, that its kernel satisfies certain size and smoothness  conditions and that the $T1$  holds with $2\gamma$ and any $\beta$ such that $2\gamma+ \beta<1$. The two first requirements are done in the proof of Theorem 1.4 in~\cite{MR3180934_stinga} (see pages 577 and 578). For the other two conditions we can not use their estimates since they would give us the restriction $2\gamma< \delta_0$.  However, under the extra hypothesis assumed, we obtained in Lemma~\ref{lem-Jgamma} that the kernel of $L^{-\gamma}$ satisfies smoothness with $\delta=1$ and, due to Remark~\ref{rem-T1L}, when $0< 2\gamma <1$, it satisfies
\[
|L^{-\gamma}1(x)-L^{-\gamma}1(y)|\leq C\left( \frac{|x-y|}{\rho(x)}\right)^{1-2\gamma} |x-y|^{2\gamma}
\]
as long as $|x-y|< \rho(x)$.

In this way we have proved:
\begin{prop}\label{continuity} 
	If $V\in RH_{d/2}$ and $V(x) \leq C/\rho^2(x)$, then for $0<2\gamma<1$, $L^{-\gamma}$ is bounded from $BMO_\rho^\beta$ into $BMO_\rho^{\beta +2\gamma}$ provided $\beta+2\gamma < 1$. 
\end{prop}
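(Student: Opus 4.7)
The plan is to apply the general $T1$-type theorem from~\cite{MR3180934_stinga} (their Theorem~1.1), which gives boundedness of an operator from $BMO_\rho^\beta$ into $BMO_\rho^{\beta+2\gamma}$ provided the operator has (i) $L^p$-boundedness for some $p$, (ii) a kernel obeying suitable size and regularity estimates adapted to $\rho$, and (iii) a $T1$ type condition compatible with the target smoothness index $\beta+2\gamma$. The main task is simply to check these three ingredients for $L^{-\gamma}$ under the stronger hypothesis $V(x)\leq C/\rho^{2}(x)$.

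First, the $L^p$-boundedness of $L^{-\gamma}$ is well known under $V\in RH_{d/2}$; in particular it is contained in the proof of Theorem~1.4 in~\cite{MR3180934_stinga}. The pointwise size and smoothness of the kernel $\mathcal{J}_\gamma(x,y)$ of $L^{-\gamma}$ are precisely parts~\eqref{item-Jgam-a} and~\eqref{item-Jgam-b} of Lemma~\ref{lem-Jgamma}; the smoothness holds with the full exponent $\delta=1$, which is more than enough to match the framework of~\cite{MR3180934_stinga} in the target regularity range $\beta+2\gamma<1$.

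The key new ingredient is the $T1$ type condition. Here we cannot use the estimates of~\cite{MR3180934_stinga} directly, because theirs would only yield the restriction $2\gamma<\delta_0=\min\{1,2-d/q\}$. However, Remark~\ref{rem-T1L}, obtained as a by-product of the proof of Theorem~\ref{thm:TGamma}, gives
\[
|L^{-\gamma}1(x)-L^{-\gamma}1(z)|\leq C\left(\frac{|x-z|}{\rho(x)}\right)^{1-2\gamma}|x-z|^{2\gamma},
\]
as long as $|x-z|<\rho(x)/2$. Since $1-2\gamma>0$, this provides exactly the excess smoothness over the base Lipschitz-$2\gamma$ behaviour that is needed to accommodate any $\beta$ with $\beta+2\gamma<1$: the factor $\bigl(|x-z|/\rho(x)\bigr)^{1-2\gamma}$ can absorb an additional $|x-z|^\beta/\rho(x)^{\beta}$ as long as $\beta<1-2\gamma$. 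Combined with the globally finite size of $L^{-\gamma}1(x)$ (which follows from the kernel estimate~\eqref{item-Jgam-a} of Lemma~\ref{lem-Jgamma} together with the doubling properties of~$\rho$), this gives the remaining hypothesis of Theorem~1.1 in~\cite{MR3180934_stinga}.

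The only step requiring care is the translation between our $T1$ bound (written as the product of $\left(|x-z|/\rho(x)\right)^{1-2\gamma}$ and $|x-z|^{2\gamma}$) and the formulation in~\cite{MR3180934_stinga}, which expects a bound of the form $|x-z|^{2\gamma+\beta}/\rho(x)^\beta$; this is however immediate since $(|x-z|/\rho(x))^{1-2\gamma}\leq(|x-z|/\rho(x))^{\beta}$ when $|x-z|\leq\rho(x)$ and $\beta<1-2\gamma$. With these verifications in hand, Theorem~1.1 of~\cite{MR3180934_stinga} applies and yields the stated $BMO_\rho^\beta\to BMO_\rho^{\beta+2\gamma}$ continuity.
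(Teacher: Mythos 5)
Your proposal matches the paper's own proof essentially step for step: both invoke Theorem~1.1 of~\cite{MR3180934_stinga}, take the $L^p$-boundedness from the proof of Theorem~1.4 there, draw the kernel size and $\delta=1$ smoothness from Lemma~\ref{lem-Jgamma}, and supply the $T1$ condition via Remark~\ref{rem-T1L}. The extra paragraph you add, noting that $\bigl(|x-z|/\rho(x)\bigr)^{1-2\gamma}\le\bigl(|x-z|/\rho(x)\bigr)^{\beta}$ when $|x-z|\le\rho(x)$ and $\beta<1-2\gamma$, is just a more explicit spelling-out of why the Remark's bound yields the $T1$ condition in the form required for the target index $\beta+2\gamma$, not a genuinely different argument.
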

Our next lemma gives estimates for comparing the Schrödinger fractional integral with the classical one, which will be helpful for proving the necessity of condition  \eqref{eq-suavVgamma-2}. 

Following Shen, we call $\Gamma$ and $\Gamma_0$ to the corresponding fundamental solutions of $L+i \tau$ and $-\Delta +i\tau$. When $\tau=0$ they are the kernels of the operators $L^{-1}$ and $(-\Delta)^{-1}$. As before, for $0<\gamma<1$, we denote by  $\JS_\gamma$  and by $\JC_\gamma$ the kernels of $L^{-\gamma}$ and $\left( -\Delta\right) ^{-\gamma}$,  respectively.

\begin{lem} \label{compare}
	Let $V\in RH_{d/2}$. Then, there exists $\de_0>0$ such that
	\begin{enumerate}[(a)]
		\item\label{item-disg-a} For any $k>0$ there exists $C_k$
		such that
		\begin{equation}
		|\Gamma(x,y,\tau)-\Gamma_0(x,y,\tau)|\leq \left( \frac{|x-y|}{\rho(x)}\right) ^{\de_0} \frac{C_k}{\left( 1+|\tau|^{1/2}|x-y|\right)^k |x-y|^{d-2}},
		\end{equation}
		for $|x-y|< \rho(x)$. 
		\item\label{item-disg-b} For any $\gamma$ such that  $0<\gamma<1$,
		\begin{equation}
		|\JS_\gamma(x,y)-\JC_\gamma(x,y)| \leq \left( \frac{|x-y|}{\rho(x)}\right) ^{\de_0} \frac{C_\ga}{|x-y|^{d-2\gamma}},
		\end{equation}
		for $|x-y|< \rho(x)$.

	\end{enumerate}
	
	\begin{proof}
		Part~\eqref{item-disg-a} is proved in~\cite{shen} (see Lemma 4.5 in there).
		For~\eqref{item-disg-b} we just use the spectral formula, valid for $0<\gamma<1$
		\[
		L^{-\gamma}= -\frac{1}{2\pi} \int_{-\infty}^\infty (-i\tau)^{-\gamma} (L+i\tau)^{-1} d\tau
		\]
		as well as the same identity for $(-\Delta)^{-\gamma}$. Now, subtracting both identities, using part~\eqref{item-disg-a} and performing the integral, we easily arrive to the right hand side.

	\end{proof}
\end{lem}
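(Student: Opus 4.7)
The plan is to treat the two parts separately, using (a) as the kernel-level input and then lifting it to fractional powers in (b) via spectral calculus. The statement does not require the sharper structural information from Lemma~\ref{lem:Gamma} (which needed the bound $V\leq C/\rho^{2}$); instead, with only $V\in RH_{d/2}$ at our disposal, one must settle for the weaker decay $(|x-y|/\rho(x))^{\delta_0}$ provided by Shen's work.

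For part \eqref{item-disg-a}, I would simply quote Lemma~4.5 of \cite{shen}: under the standing hypothesis $V\in RH_{d/2}$, that lemma yields an exponent $\delta_0>0$ (depending only on the reverse-H\"older class of $V$) and the constants $C_k$ so that the stated inequality holds for $|x-y|<\rho(x)$. There is nothing to add: the constant $\delta_0$ from that source is exactly the one to declare at the outset of our lemma.

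For part \eqref{item-disg-b}, the key observation is the spectral representation
\begin{equation*}
L^{-\gamma}=-\frac{1}{2\pi}\int_{-\infty}^{\infty}(-i\tau)^{-\gamma}(L+i\tau)^{-1}\,d\tau,\qquad 0<\gamma<1,
\end{equation*}
together with the identical formula for $(-\Delta)^{-\gamma}$. Subtracting gives
\begin{equation*}
\JS_\gamma(x,y)-\JC_\gamma(x,y)=-\frac{1}{2\pi}\int_{-\infty}^{\infty}(-i\tau)^{-\gamma}\bigl[\Gamma(x,y,\tau)-\Gamma_0(x,y,\tau)\bigr]\,d\tau,
\end{equation*}
so that after taking absolute values and inserting the estimate from \eqref{item-disg-a} for some sufficiently large $k$, the right-hand side factors as
\begin{equation*}
\left(\frac{|x-y|}{\rho(x)}\right)^{\delta_0}\frac{C_k}{|x-y|^{d-2}}\int_0^\infty\frac{\tau^{-\gamma}}{(1+\tau^{1/2}|x-y|)^k}\,d\tau.
\end{equation*}
The change of variable $s=\tau^{1/2}|x-y|$ turns the remaining integral into $|x-y|^{2\gamma-2}\int_0^\infty s^{1-2\gamma}(1+s)^{-k}\,ds$, which converges for any $k>2-2\gamma$; this collapses two of the $|x-y|^{-(d-2)}$ factors into $|x-y|^{-(d-2\gamma)}$ and produces the desired bound.

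The only subtle point is choosing $k$ large enough in \eqref{item-disg-a} so that the $\tau$-integral converges at infinity while respecting $0<\gamma<1$; the local integrability at $\tau=0$ is automatic since $\gamma<1$. No further technical obstacle arises, as the $(|x-y|/\rho(x))^{\delta_0}$ factor is independent of $\tau$ and simply pulls out of the integral.
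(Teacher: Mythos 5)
Your proof takes exactly the same route as the paper: part (a) is quoted from Shen's Lemma 4.5, and part (b) is obtained by subtracting the two spectral representations, inserting the estimate from (a), and computing the resulting $\tau$-integral via the substitution $s=\tau^{1/2}|x-y|$. The computation you carry out (with the observations that $\gamma<1$ handles $\tau\to 0$ and $k>2-2\gamma$ handles $\tau\to\infty$) is the calculation the paper compresses into ``performing the integral,'' so there is nothing to add.
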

Finally, before proving the equivalence result we provide a family of functions belonging to $BMO_\rho^\beta$ which will be also useful in showing the necessity of condition \eqref{eq-suavVgamma-2}. By the way, let us remind that for $0<\beta<1$, functions in $BMO_\rho^\beta$ have a point-wise description, that for $w=1$, according to~\eqref{cond_Lip_local} and~\eqref{cond_Lip_global}, are \begin{equation}\label{eq-averages}
|f(x)-f(y)|\leq C|x-y|^\beta
\end{equation} 
and
\begin{equation}\label{eq-smooth}
|f(x|\leq C \rho^\beta(x).
\end{equation} 

Moreover, for a function to be in $BMO_\rho^\beta$, $0<\beta<1$, it is enough to check~\eqref{eq-averages} for $x$, $y\in\RR^d$ such that $|x-y|< \rho(x)$ and~\eqref{eq-smooth}.

\begin{lem}\label{ejemplos}
	If for each point $x$ we consider the functions
	\begin{equation}
	f_{x}(z)= \max \lbrace \left( 2\rho(x)\right) ^{\beta } - |x-z|^{\beta },\,\,0\rbrace.
	\end{equation}
	Then, they belong to $BMO_\rho^\beta$ with uniformly bounded norms.
	
	\begin{proof}
		We fix $x$ and for simplicity we call just $f$ to the associated function.
		Clearly $|f(z)|\leq c \rho^\beta(z)$. In fact, if $z$ is such that $|x-z|\leq \rho(x)$ then $\rho(x) \simeq \rho(z)$ and otherwise $f(z)=0$. So, in any case, the constant $c$ is independent of the parameter $x$.
		Next, to check regularity, we observe that the function $h(t)= \max\lbrace t,0\rbrace$ is Lipschitz-1 and since $g(t)= t^{\beta}$ is Lipschitz-$\beta $, we have
		\begin{equation}
		|f(z)-f(z')|\leq ||x-z|^{\beta }- |x-z'|^{\beta }|\leq c|z-z'|^{\beta }
		\end{equation}
		with a constant independent of the parameter $x$ .
	\end{proof}
\end{lem}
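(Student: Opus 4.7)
My plan is to verify directly the two pointwise conditions that characterize $BMO_\rho^\beta$ for $0<\beta<1$ and $w\equiv 1$, namely inequalities \eqref{eq-averages} and \eqref{eq-smooth}, with constants that do not depend on the base point $x$. Since the paper observes that for such $\beta$ it suffices to check the Hölder bound only for pairs $z,z'$ with $|z-z'|<\rho(z)$, there is very little to do.

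For the size estimate I would first note that $f_x$ is supported in $B(x,2\rho(x))$: indeed outside this ball the expression in the maximum is non-positive, so $f_x$ vanishes. Thus when $f_x(z)\neq 0$ we have $|x-z|\leq 2\rho(x)$, and Lemma~\ref{lem:mismacritica} gives $\rho(z)\simeq\rho(x)$ with constants depending only on $c_\rho$ and $N_0$. Consequently
\[
|f_x(z)|\leq (2\rho(x))^\beta \leq C\,\rho(z)^\beta,
\]
with $C$ independent of $x$, which is \eqref{eq-smooth}.

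For the regularity estimate I would combine two elementary one-variable facts: the map $t\mapsto \max\{t,0\}$ is $1$-Lipschitz on $\RR$, and since $0<\beta\leq 1$ the map $s\mapsto s^\beta$ is $\beta$-Hölder on $[0,\infty)$ with constant $1$, i.e.\ $|s_1^\beta-s_2^\beta|\leq |s_1-s_2|^\beta$. Applying these with $s_i=|x-z_i|$ and using the reverse triangle inequality yields
\[
|f_x(z)-f_x(z')|\leq \bigl|\,|x-z|^\beta-|x-z'|^\beta\,\bigr|\leq \bigl|\,|x-z|-|x-z'|\,\bigr|^\beta\leq |z-z'|^\beta,
\]
again with a constant independent of $x$. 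This gives \eqref{eq-averages}.

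Together, these two bounds show that $\|f_x\|_{BMO_\rho^\beta}$ is controlled by an absolute constant, proving the lemma. I do not expect any real obstacle: the only point to keep in mind is that one must invoke the pointwise characterization quoted in Section~\ref{sec-pre} to avoid having to re-derive the oscillation inequality \eqref{osc_acotada} and the average inequality \eqref{prom_acotado} by hand.
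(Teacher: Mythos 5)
Your proposal is correct and follows essentially the same approach as the paper: verify the pointwise characterization \eqref{eq-averages} and \eqref{eq-smooth}, using $\rho(z)\simeq\rho(x)$ on the support (via Lemma~\ref{lem:mismacritica}) for the size bound and the Lipschitz-$1$/H\"older-$\beta$ facts for $\max\{\cdot,0\}$ and $t\mapsto t^\beta$ for the regularity bound. Your treatment is in fact slightly more careful than the paper's in identifying the support as $B(x,2\rho(x))$ rather than $B(x,\rho(x))$, but the argument is the same.
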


Now we are in position to establish the equivalence result.
\begin{prop}
	Let $ V\in RH_{d/2}$ and assume that  $\alpha$ and $\beta$ are non-negative numbers such that $2\gamma + \beta <1$ Then, the operator $V^\gamma L^{-\gamma}$ is bounded on
	$BMO_\rho^\beta$ if and only if the potential $V$ satisfies condition \eqref{eq-suavVgamma-2}.
	\begin{proof}
		First assume that $V$ satisfies \eqref{eq-suavVgamma-2}. In particular, as we pointed out in the introduction $V(x)\leq C \rho^2(x)$.
		Now observe that our operator is the composition $T_{V^\gamma}\circ L^{-\gamma}$, where $T_{V^\gamma}$ means point-wise multiplication by $V^\gamma$. By Proposition~\ref{continuity}, $L^{-\gamma}$ is  bounded from $BMO_\rho^\beta$ into  $BMO_\rho^{\beta+ 2\gamma}$ when $\beta +2\gamma< 1$. 
		
		Next we claim that the operator of multiplication by $V^\gamma$ maps  $BMO_\rho^{\beta+ 2\gamma}$ into $BMO_\rho^\beta$. In fact, for any $f\in BMO_\rho^{\beta+2\gamma}$ and $|x-y|< \rho(x)$,
		\begin{equation}
		\begin{split}
		|V^\gamma(x)f(x)-V^\gamma(y)f(y)| & \leq |f(x)|\,|V^\gamma(x)-V^\gamma(y)+|V^\gamma(y)|\,|f(x)- f(y))|
		\\& \leq  C\rho^{\beta + 2\gamma}(x)\frac{|x-y|^{\beta }}{\rho^{2\gamma+\beta}(x)}+ C \frac{|x-y|^{\beta + 2\gamma }}{\rho^{2\gamma}(x)}
		\\& \leq C |x-y|^{\beta}.
		\end{split}  
		\end{equation}
		On the other side, for $x\in \mathbb{R}^d$ we have
		\begin{equation}
		|V^\gamma(x) f(x)| \leq \frac{C}{\rho^{2\gamma}(x)}\,\,\rho^{\beta + 2\gamma}(x)=C\, \rho^\beta(x),
		\end{equation}
		and the continuity $T_{V^\gamma}$ is proved. 
		
		Combining both results the $BMO_\rho ^\beta$-boundedness of $V^\gamma L^{-\gamma}$ follows as long as $\beta +2\gamma <1$.
		
		Now let us suppose that the operator is bounded on  $BMO_\rho ^\beta$. Lemma~\ref{ejemplos} allow us to evaluate the operator at the functions $f_x$. Hence, by means of \eqref{eq-averages}, we may write 
		\begin{equation}\label{eqaux}
		|V^\gamma(x)\left( L^{-\gamma}f_x\right) (x)| \leq C \|f_x\|_{BMO_\rho^\beta} \,\, \rho^\beta(x).
		\end{equation}

		Now we claim that there is a constant $C_1$, independent of $x$ such that
		\[
		|\left(  L^{-\gamma}f_x\right) (x)| \geq C_1 \rho (x)^{\beta+2\gamma}.
		\]
		If the claim were true, plugging this estimate into~\eqref{eqaux} we easily arrive to 
		\[
		V(x) \leq \frac{C}{\rho^2(x)}.
		\]

		 To get the lower bound for $\left( L^{-\gamma}f_x\right) (x)$, notice that by the definition of $f_x$ and for any value of $\lambda \leq 1$ to be chosen, we have
		\begin{equation}\label{eqaux3}
		|\left(  L^{-\gamma}f_x\right) (x)|\geq c_\beta \rho^{\beta+2\gamma}(x) \int_{B(x,\lambda \rho(x))} \JS_\gamma (x,y)\,dy.
		\end{equation}
		To estimate the last integral observe that, using Lemma \ref{compare} part~\eqref{item-disg-b}, we obtain for $|x-y|< \lambda \rho(x)$  
		\begin{equation}
		|\JS_\gamma(x,y)-\JC_\gamma(x,y)| \leq C_0 \lambda^{\delta_0} \frac{1}{|x-y|^{d-2\gamma}}.
		\end{equation}
		Therefore, since $\JC_\gamma$ is, upon a constant, $|x-y|^{2\gamma-d}$, we obtain
		\[
		\JS_\gamma(x,y)\geq                                                                                                          \JC_\gamma(x,y)-|\JS_\gamma(x,y)-\JC_\gamma(x,y)|\geq \frac{1}{2}\JC_\gamma(x,y),
		\]
		choosing $\lambda$ sufficiently small. Thus, for that value of $\lambda$,
		\[
		\int_{B(x,\lambda \rho(x))} \JS_\gamma (x,y)\,dy \geq C_2 \int _{B(x,\lambda \rho(x))} =\,\, C \rho^{2\gamma}(x),
		\]
		where the constant depends on $\beta$, $\gamma$ and the dimension, but not from $x$. Hence the claim is proved.
		
		Now, to get the condition  \eqref{eq-suavVgamma-2}, we use again that the operator is bounded on $BMO_\rho^\beta$  when applied to the functions $f_x$, this time through the smoothness condition \eqref{eq-smooth}, namely,
		\begin{equation}\label{eqaux2}
		|V^\gamma(x)\left( L^{-\gamma}f_x\right) (x)-V^\gamma(y)\left( L^{-\gamma}f_x\right) (y)| \leq C \|f_x\|_{BMO_\rho^\beta}\,\, |x-y|^\beta.
		\end{equation}
		Adding and subtracting the product $V^\gamma(y) \left( L^{-\gamma}f_x \right)(x)$, the left hand side is bounded from below by
		\[
		|V^\gamma(x)-V^\gamma(y)|\left|\left( L^{-\gamma}f_x\right)(x)\right|\, - V^\gamma(y)\,\,|\left( L^{-\gamma}f_x\right)(x)-\left( L^{-\gamma}f_x\right)(y)|= I-II.
		\]
		For the first term we use what we just proved for $L^{-\gamma} f_x (x)$. The second term can be estimated by above using the bound obtained for $V$ and the continuity $BMO_\rho^\beta-BMO^{\beta+2\gamma}_\rho$ of $L^{-\gamma}$ proved in Proposition \ref{continuity}.  Altogether, assuming $ |x-y|\leq \rho (x)$, we get
		\[
		I-II \geq C_0 \rho^{\beta+2\gamma}(x)|V^\gamma(x)-V^\gamma(y)|- \frac{C}{\rho^{2\gamma}(x)}\|f_x\|_{BMO_\rho^\beta}\,\, |x-y|^{\beta+2\gamma},
		\] 
		since $\rho(x)\simeq \rho(y)$.
		Therefore, going back to~\eqref{eqaux2},
		\[
		\rho^{\beta+2\gamma}(x)|V^\gamma(x)-V^\gamma(y)|\leq C \left( \frac{|x-y|^{\beta+2\gamma}}{\rho^{2\gamma}(x)} + |x-y|^\beta\right).
		\]
		Finally, using again $ |x-y|\leq \rho (x)$, we get 
		\[
		|V^\gamma(x)-V^\gamma(y)|\leq C \frac{|x-y|^\beta}{\rho^{\beta+2\gamma}(x)},
		\]
		as desired.
	\end{proof}
\end{prop}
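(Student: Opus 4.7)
The plan is to prove both implications separately, with the sufficiency being a short composition argument and the necessity requiring a more delicate test-function argument.

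For the sufficiency, I would factor $V^\gamma L^{-\gamma} = M_{V^\gamma}\circ L^{-\gamma}$, where $M_{V^\gamma}$ is pointwise multiplication by $V^\gamma$. Since \eqref{eq-suavVgamma-2} implies $V(x)\leq C/\rho^2(x)$ (by the averaging trick used in the Introduction), Proposition~\ref{continuity} gives that $L^{-\gamma}$ maps $BMO_\rho^\beta$ into $BMO_\rho^{\beta+2\gamma}$ under the hypothesis $\beta+2\gamma<1$. It then suffices to show $M_{V^\gamma}$ carries $BMO_\rho^{\beta+2\gamma}$ to $BMO_\rho^\beta$. The global condition $|V^\gamma(x)f(x)|\lesssim \rho^{-2\gamma}(x)\cdot\rho^{\beta+2\gamma}(x)=\rho^\beta(x)$ follows directly, while the local Lipschitz bound on $V^\gamma f$ at scale $|x-y|<\rho(x)$ splits into $|f(x)||V^\gamma(x)-V^\gamma(y)|$ and $|V^\gamma(y)||f(x)-f(y)|$, which by \eqref{eq-suavVgamma-2} and the pointwise description of $BMO_\rho^{\beta+2\gamma}$ are both controlled by $|x-y|^\beta$.

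For the necessity, the strategy is to probe the operator with the bump functions $f_x$ supplied by Lemma~\ref{ejemplos}, which have norms uniformly bounded in the parameter $x$. Boundedness of $V^\gamma L^{-\gamma}$ applied to $f_x$ yields
\begin{equation*}
|V^\gamma(x)(L^{-\gamma}f_x)(x)|\leq C\rho^\beta(x),
\end{equation*}
and the corresponding oscillation estimate
\begin{equation*}
|V^\gamma(x)(L^{-\gamma}f_x)(x)-V^\gamma(y)(L^{-\gamma}f_x)(y)|\leq C|x-y|^\beta,
\end{equation*}
for $|x-y|<\rho(x)$. The main obstacle, and the key step I would focus on first, is to produce a matching lower bound $(L^{-\gamma}f_x)(x)\geq C\rho^{\beta+2\gamma}(x)$. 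To do this I would restrict the defining integral to $B(x,\lambda\rho(x))$ for a small $\lambda$; on this ball $f_x(y)\gtrsim \rho^{\beta+2\gamma}(x)$, and Lemma~\ref{compare}\eqref{item-disg-b} provides $|\mathcal{J}_\gamma(x,y)-\mathbf{J}_\gamma(x,y)|\leq C_0\lambda^{\delta_0}|x-y|^{2\gamma-d}$, so choosing $\lambda$ small enough ensures $\mathcal{J}_\gamma(x,y)\geq \tfrac12\mathbf{J}_\gamma(x,y)$ there. Integrating the positive classical kernel on $B(x,\lambda\rho(x))$ produces the desired $\rho^{2\gamma}(x)$ factor.

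With the lower bound in hand, plugging back into the pointwise estimate immediately forces $V(x)\leq C/\rho^2(x)$, which by Proposition~\ref{continuity} makes $L^{-\gamma}$ bounded from $BMO_\rho^\beta$ into $BMO_\rho^{\beta+2\gamma}$. To extract \eqref{eq-suavVgamma-2}, I would write
\begin{equation*}
V^\gamma(x)(L^{-\gamma}f_x)(x)-V^\gamma(y)(L^{-\gamma}f_x)(y)=\bigl(V^\gamma(x)-V^\gamma(y)\bigr)(L^{-\gamma}f_x)(x)-V^\gamma(y)\bigl[(L^{-\gamma}f_x)(y)-(L^{-\gamma}f_x)(x)\bigr],
\end{equation*}
isolate the first term via the triangle inequality, and control the second term using $V^\gamma(y)\leq C\rho^{-2\gamma}(y)\simeq \rho^{-2\gamma}(x)$ together with the $BMO_\rho^{\beta+2\gamma}$-regularity of $L^{-\gamma}f_x$, which yields an oscillation bound of order $|x-y|^{\beta+2\gamma}$. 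The lower bound on $(L^{-\gamma}f_x)(x)$ then allows us to solve for $|V^\gamma(x)-V^\gamma(y)|$, and exploiting $|x-y|\leq\rho(x)$ absorbs the remainder into the claimed inequality.
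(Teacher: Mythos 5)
Your proposal takes essentially the same route as the paper: for sufficiency, factoring $V^\gamma L^{-\gamma} = M_{V^\gamma}\circ L^{-\gamma}$, invoking Proposition~\ref{continuity} for the mapping $BMO_\rho^\beta\to BMO_\rho^{\beta+2\gamma}$, and checking the multiplier via the pointwise characterization; for necessity, probing with the bump functions $f_x$ from Lemma~\ref{ejemplos}, obtaining the lower bound on $(L^{-\gamma}f_x)(x)$ by comparing $\JS_\gamma$ with $\JC_\gamma$ on a small ball using Lemma~\ref{compare}, deducing $V(x)\leq C/\rho^2(x)$, and then reading off \eqref{eq-suavVgamma-2} from the oscillation estimate. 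One exponent slip: on $B(x,\lambda\rho(x))$ the test function satisfies $f_x(y)\gtrsim\rho^\beta(x)$, not $\rho^{\beta+2\gamma}(x)$, since $f_x(y)\geq\bigl(2^\beta-\lambda^\beta\bigr)\rho^\beta(x)$ there; it is this $\rho^\beta(x)$, multiplied by the kernel factor $\rho^{2\gamma}(x)$, that yields the claimed $(L^{-\gamma}f_x)(x)\gtrsim\rho^{\beta+2\gamma}(x)$, whereas with your exponent the product would incorrectly be $\rho^{\beta+4\gamma}(x)$. (The paper's own display at this step contains the same typo.) Aside from this bookkeeping slip the argument is correct.
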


\def\ocirc#1{\ifmmode\setbox0=\hbox{$#1$}\dimen0=\ht0 \advance\dimen0
	by1pt\rlap{\hbox to\wd0{\hss\raise\dimen0
			\hbox{\hskip.2em$\scriptscriptstyle\circ$}\hss}}#1\else {\accent"17 #1}\fi}
\def\cprime{$'$} \def\cprime{$'$}


\begin{thebibliography}{10}
	
	\bibitem{BCH1_MR3042701}
	B.~Bongioanni, A.~Cabral, and E.~Harboure.
	\newblock Extrapolation for classes of weights related to a family of operators
	and applications.
	\newblock {\em Potential Anal.}, 38(4):1207--1232, 2013.
	
	\bibitem{BCH3_MR3541813}
	B.~Bongioanni, A.~Cabral, and E.~Harboure.
	\newblock Schr\"odinger type singular integrals: weighted estimates for
	{$p=1$}.
	\newblock {\em Math. Nachr.}, 289(11-12):1341--1369, 2016.
	
	\bibitem{bongioanni2019weighted}
	B.~Bongioanni, E.~Harboure, and P.~Quijano.
	\newblock Weighted inequalities for schr{\"o}dinger type singular integrals.
	\newblock {\em Journal of Fourier Analysis and Applications}, 25(3):595--632,
	Jun 2019.
	
	\bibitem{BHSnegativepowers}
	B.~Bongioanni, E.~Harboure, and O.~Salinas.
	\newblock Weighted inequalities for negative powers of {S}chr\"odinger
	operators.
	\newblock {\em J. Math. Anal. Appl.}, 348(1):12--27, 2008.
	
	\bibitem{BHS-RieszJMAA}
	B.~Bongioanni, E.~Harboure, and O.~Salinas.
	\newblock Riesz transforms related to {S}chr\"odinger operators acting on
	{$BMO$} type spaces.
	\newblock {\em J. Math. Anal. Appl.}, 357(1):115--131, 2009.
	
	\bibitem{BHS-classesofweights}
	B.~Bongioanni, E.~Harboure, and O.~Salinas.
	\newblock Classes of weights related to {S}chr\"odinger operators.
	\newblock {\em J. Math. Anal. Appl.}, 373(2):563--579, 2011.
	
	\bibitem{T1BuiLiLy}
	T.~A. Bui, J.~Li, and {Fu Ken Ly}.
	\newblock T1 criteria for generalised {C}alder{\'o}n-{Z}ygmund type operators
	on {H}ardy and {BMO} spaces associated to {S}chr{\"o}dinger operators and
	applications.
	\newblock {\em Annali della {S}cuola {N}ormale {S}uperiore di {P}isa, Classe di
		scienze}, 18(1):203--239, 2018.
	
	\bibitem{DZ-HSH-99}
	J.~Dziuba{\'n}ski and J.~Zienkiewicz.
	\newblock Hardy spaces {$H\sp 1$} associated to {S}chr\"odinger operators with
	potential satisfying reverse {H}\"older inequality.
	\newblock {\em Revista Matem\'atica Iberoamericana}, 15(2):279--296, 1999.
	
	\bibitem{gilbarg2001elliptic}
	D.~Gilbarg and N.S. Trudinger.
	\newblock {\em Elliptic Partial Differential Equations of Second Order}.
	\newblock Classics in Mathematics. Springer Berlin Heidelberg, 2001.
	
	\bibitem{MR3002608}
	H.~Liu, L.~Tang, and H.~Zhu.
	\newblock Weighted {H}ardy spaces and {$BMO$} spaces associated with
	{S}chr\"odinger operators.
	\newblock {\em Math. Nachr.}, 285(17-18):2173--2207, 2012.
	
	\bibitem{Ly_secondorder_MR3488134}
	Fu~Ken Ly.
	\newblock Classes of weights and second order {R}iesz transforms associated to
	{S}chr\"odinger operators.
	\newblock {\em J. Math. Soc. Japan}, 68(2):489--533, 2016.
	
	\bibitem{MR3180934_stinga}
	T.~Ma, P.~R. Stinga, J.~L. Torrea, and C.~Zhang.
	\newblock Regularity estimates in {H}\"older spaces for {S}chr\"odinger
	operators via a {$T1$} theorem.
	\newblock {\em Ann. Mat. Pura Appl. (4)}, 193(2):561--589, 2014.
	
	\bibitem{shen}
	Z.~Shen.
	\newblock {$L\sp p$} estimates for {S}chr\"odinger operators with certain
	potentials.
	\newblock {\em Ann. Inst. Fourier (Grenoble)}, 45(2):513--546, 1995.
	
	\bibitem{tangMR3365805}
	L.~Tang.
	\newblock Weighted norm inequalities for {S}chr\"odinger type operators.
	\newblock {\em Forum Math.}, 27(4):2491--2532, 2015.
	
\end{thebibliography}
\end{document}